\documentclass[11pt]{imsart}
%\usepackage[style=authoryear]{biblatex}
%% Packages
\RequirePackage{amsthm,amsmath,amsfonts,amssymb}
\RequirePackage[authoryear]{natbib}
\RequirePackage{graphicx}
\usepackage{mathtools}
%\mathtoolsset{showonlyrefs}
\usepackage{xifthen}
\usepackage{bbm}
\usepackage{esvect}
\usepackage{amsmath} % provides numberwithin (and lots more)
\usepackage{lipsum}  % for sample text
\usepackage{pkgfile}
\usepackage[section]{placeins}
\startlocaldefs
%%%%%%%%%%%%%%%%%%%%%%%%%%%%%%%%%%%%%%%%%%%%%%
%%                                          %%
%% Uncomment next line to change            %%
%% the type of equation numbering           %%
%%                                          %%
%%%%%%%%%%%%%%%%%%%%%%%%%%%%%%%%%%%%%%%%%%%%%%
%\numberwithin{equation}{section}
%%%%%%%%%%%%%%%%%%%%%%%%%%%%%%%%%%%%%%%%%%%%%%
%%                                          %%
%% For Axiom, Claim, Corollary, Hypothesis, %%
%% Lemma, Theorem, Proposition              %%
%% use \theoremstyle{plain}                 %%
%%                                          %%
%%%%%%%%%%%%%%%%%%%%%%%%%%%%%%%%%%%%%%%%%%%%%%
% \theoremstyle{plain}
% \newtheorem{axiom}{Axiom}
% \newtheorem{claim}[axiom]{Claim}
% \newtheorem{theorem}{Theorem}[section]
% \newtheorem{lemma}[theorem]{Lemma}
% %%%%%%%%%%%%%%%%%%%%%%%%%%%%%%%%%%%%%%%%%%%%%%
% %%                                          %%
% %% For Assumption, Definition, Example,     %%
% %% Notation, Property, Remark, Fact         %%
% %% use \theoremstyle{remark}                %%
% %%                                          %%
% %%%%%%%%%%%%%%%%%%%%%%%%%%%%%%%%%%%%%%%%%%%%%%
% \theoremstyle{remark}
% \newtheorem{definition}[theorem]{Definition}
% \newtheorem*{example}{Example}
% \newtheorem*{fact}{Fact}
%%%%%%%%%%%%%%%%%%%%%%%%%%%%%%%%%%%%%%%%%%%%%%
%% Please put your definitions here:     

\newtheorem{claim}{Claim}
\newtheorem{prop}{Proposition}
\newtheorem{rem}{Remark}
\newtheorem{defi}{Definition}
\newtheorem{theorem}{Theorem}
\newtheorem{notation}{Notation}

\renewcommand{\geq}{\geqslant}
\renewcommand{\leq}{\leqslant}

%%
%%%%%%%%%%%%%%%%%%%%%%%%%%%%%%%%%%%%%%%%%%%%%%

%\newcommand{\le}{{\leq}}
%\renewcommand{\le}{\leslant}
\renewcommand{\geq}{\geqslant}
\renewcommand{\leq}{\leqslant}

\newcommand{\dm}{{\mathrm{dim}}}

\newcommand{\p}{{\mathbb{P}}}
\newcommand{\e}{{\mathbb{E}}}

\newcommand{\bt}{{\bm t}}
\newcommand{\bh}{{\bm h}}
\newcommand{\bx}{{\bm x}}

\endlocaldefs

\begin{document}
	
	\begin{frontmatter}
		\title{Optimal Confidence Bands for Shape-restricted Regression in Multidimensions}
		%\title{A sample article title with some additional note\thanksref{t1}}
		\runtitle{Confidence Bands for Shape-restricted Regression}
		%\thankstext{T1}{A sample additional note to the title.}
		
		\begin{aug}
			\author[A.D.]{\fnms{Ashley (Pratyay)}~\snm{Datta}\ead[label=e1]{pd2511@columbia.edu}},
			\author[S.M.]{\fnms{Somabha}~\snm{Mukherjee}\ead[label=e2]{somabha@nus.edu.sg}\orcid{0000-0001-6772-3404}}
			\and
			\author[B.S.]{\fnms{Bodhisattva}~\snm{Sen}}\footnote{\footnotesize{Supported by NSF DMS-2311062.}\ead[label=e3]{bodhi@stat.columbia.edu}}
			%%%%%%%%%%%%%%%%%%%%%%%%%%%%%%%%%%%%%%%%%%%%%%
			%% Addresses                                %%
			%%%%%%%%%%%%%%%%%%%%%%%%%%%%%%%%%%%%%%%%%%%%%%
			\address[A.D.]{Department of Statistics,
				Columbia University\printead[presep={,\ }]{e1}}
			
			\address[S.M.]{Department of Statistics and Data Science, National University of Singapore\printead[presep={,\ }]{e2}}
			
			\address[B.S.]{Department of Statistics, Columbia University\printead[presep={,\ }]{e3}}
			
		\end{aug}
		
		\begin{abstract}
			In this paper, we propose and study construction of confidence bands for shape-constrained regression functions when the predictor is multivariate. In particular, we consider the continuous multidimensional white noise model given by $d Y(\bm t) = n^{1/2} f(\bm t) \,d\bm t + d W(\bm t)$, where $Y$ is the observed stochastic process on $[0,1]^d$ ($d\ge 1$), $W$ is the standard Brownian sheet on $[0,1]^d$, and $f$ is the unknown function of interest assumed to belong to a (shape-constrained) function class, e.g., coordinate-wise monotone functions or convex functions. The constructed confidence bands are based on local kernel averaging with bandwidth chosen automatically via a multivariate multiscale statistic. The confidence bands have guaranteed coverage for every $n$ and for every member of the underlying function class. Under monotonicity/convexity constraints on $f$, the proposed confidence bands automatically adapt (in terms of width) to the global and local (H\"{o}lder) smoothness and intrinsic dimensionality of the unknown $f$; the bands are also shown to be optimal in a certain sense. These bands have (almost) parametric ($n^{-1/2}$) widths when the underlying function has ``low-complexity'' (e.g., piecewise constant/affine).
		\end{abstract}
		
		\begin{keyword}[class=MSC]
			\kwd[Primary ]{62G15}
			\kwd{62G05}
			\kwd[; secondary ]{62G20}
		\end{keyword}
		
		\begin{keyword}
			\kwd{Automatic adaptation}
			\kwd{continuous multidimensional white-noise model}
			\kwd{coordinate-wise nondecreasing function}
			\kwd{H\"{o}lder smoothness}
			\kwd{minimax optimal}
			\kwd{multivariate convex function}
		\end{keyword}
		
	\end{frontmatter}

	\section{Introduction}
	The area of shape-restricted regression is concerned with nonparametric estimation of a regression function  under shape constraints such as monotonicity, convexity, unimodality/quasiconvexity, etc. This field of statistics has a long history dating back to influential papers such as \cite{Hildreth-1954}, \cite{Brunk-1955}, \cite{Prakasa-Rao-1969}, \cite{Brunk-1970}, \cite{Groeneboom-Et-Al-2001}; also see~\cite{BBBB-1972},~\cite{Robertson-Et-Al-1988},~\cite{Groeneboom-Wellner-1992},~\cite{Groeneboom-Jongbloed-2014} for book length treatments on this topic. Indeed, such shape constraints arise naturally in various contexts: isotonic regression methods are widely employed in many real-life applications ranging from predicting ad click–through rates~\citep{Mcmahan-Et-Al-2013} to gene–gene interaction search~\citep{Luss-Et-Al-2012}; convex regression arises in 
	productivity analysis~\citep{allon2007nonparametric}, efficient frontier methods~\citep{Kuosmanen-Johnson-2010}, in stochastic control~\citep{Keshavarz-Et-Al-2011}, etc. Further, in many applications (such as estimation of production and utility functions in economics), justifying smoothness assumptions on the regression function is often impractical, whereas   qualitative assumptions like monotonicity and/or concavity are available; see e.g.,~\cite{chamb},~\cite{varian1992microeconomic},~\cite{allon2007nonparametric},~\cite{varianint},~\cite{chen2018shapeenforcing}.

	In recent years there has been much activity on  estimation of such shape-constrained regression functions with multivariate predictors; see e.g.,~\cite{Seijo-Sen-2011},~\cite{Han-Et-Al-2019},~\cite{Chen-Et-Al-2020},~\cite{Deng-Zhang-2020},~\cite{MukherjeePatra}. Further, in many recent papers the accuracy of the least squares estimator in these problems has been studied via finite sample (adaptive) risk bounds; see e.g.,~\cite{Meyer-Woodroofe-2000}, \cite{Zhang-2002}, \cite{Guntu-Sen-2015}, \cite{Chatterjee-Et-Al-2015}, \cite{Chatterjee-Et-Al-2018}, \cite{Guntu-Sen-2018}, \cite{Chatterjee-Lafferty-2019}, \cite{Han-Et-Al-2019}, \cite{Kur-Et-Al-2020}. However, the problem of inference with these multivariate shape-constrained functions, e.g., construction of confidence sets, is largely unexplored. Note that, in contrast, inference for univariate shape-constrained problems is well-studied with a large body of work in the last two decades, see e.g.,~\cite{Banerjee-Wellner-2001},~\cite{Sen-Boots-2010},~\cite{L_infty-Error-2012},~\cite{Divide-Conquer-2019},~\cite{Deng-Et-Al-2022} and the references therein.
	
	In this paper we consider construction of honest confidence bands for shape-constrained regression functions with multiple covariates with special emphasis to: (i) coordinate-wise nondecreasing (isotonic) functions and (ii) convex functions. Our proposed methodology, which extends the ideas of~\cite{Dumbgen-2003} to multiple dimensions, yields asymptotically optimal confidence bands that possess various adaptivity properties. In particular, for scenarios (i) and (ii) above, we prove spatial and local adaptivity of our confidence bands with respect to the (H\"{o}lder) smoothness of the underlying function and its intrinsic dimensionality. Our confidence bands are constructed using a multidimensional multiscale statistic. Note that in the recent literature many multidimensional multiscale statistics have been developed and studied (see e.g.,~\cite{Chan-Guenther-2013},~\cite{Konig-Et-Al-2020},~\cite{Arias-Castro2005},~\cite{Walther-Perry-2022},~\cite{Walther2010}); however in this paper we consider the proposal of~\cite{multiscale}---which is inspired by the one-dimensional multiscale statistic proposed and studied in~\cite{Dumbgen-Spokoiny-2001}---as it is most convenient for our setup.
	
	%Recently, Datta and Sen \cite{multiscale} addressed the problem of optimal inference of the regression function in a continuous multidimensional white noise model, as an extension of the corresponding work for the one-dimensional case by \cite{spokoiny}. %In this paper, the authors considered the problem of testing the null hypothesis that the true regression function $f$ is 0 (or equivalently, the observed stochastic process is a white noise) against alternatives where $f$ is assumed to belong to various appropriate classes. 
	
	We consider the following continuous multidimensional white noise (regression) model: 
	\begin{equation}\label{eq:Mdl}
		{Y(\bm t)=\sqrt{n}\int_{0}^{t_1}\ldots\int_{0}^{t_d} f(s_1,\ldots,s_d)\; ds_d \ldots ds_1 + W(\bm t),}
	\end{equation}
	where $\bm t := (t_1,\ldots,t_d) \in [0,1]^d$, $d \ge 1$, $\{Y(t_1,\ldots,t_d): (t_1,\ldots,t_d)\in[0,1]^d \}$ is the observed data, $f \in L_1([0,1]^d)$ is the unknown (regression) function of interest, $W(\cdot)$ is the unobserved $d$-dimensional Brownian sheet (the multivariate generalization of the usual Brownian motion; see Definition~\ref{brsh} below), and $n$ is a known scale parameter. Estimation and inference in this model is closely related to that of (multivariate) nonparametric regression based on sample size $n$;  see e.g., \cite{BL96},~\cite{Mr01}.  We work with this white noise model as this formulation is more amiable to rescaling arguments; see e.g.,~\cite{Donoho1992}, \cite{Dumbgen-Spokoiny-2001}, \cite{Carter2006}.

	%Many new and naturally interesting questions arise if one considers the following multidimensional version of the continuous white noise model, given by:
	%\begin{equation}\label{eq:Mdl}
	%    Y(t_1,\ldots,t_d) = \sqrt{n} \int_0^{t_1}\ldots \int_0^{t_d} f(x_1,\ldots,x_d)~dx_1\ldots dx_d + W(t_1,\ldots,t_d)
	%\end{equation}
	%where $(t_1,\ldots,t_d) \in [0,1]^d$, $\{Y(t_1,\ldots,t_d): (t_1,\ldots,t_d) \in [0,1]^d\}$ is the observed data, $f\in L^1([0,1]^d)$ is the unknown regression function of interest, $\{W(t_1,\ldots,t_d): (t_1,\ldots,t_d) \in [0,1]^d\}$ is the unobserved standard $d$-dimensional Brownian sheet (see Definition \ref{brsh}), and $n$ is a known scale parameter. For example, one can ask if it is possible to construct asymptotically optimal confidence bands for $f$ under such a model, that also adapts to the intrinsic dimension of $f$, i.e. the number of arguments (variables) of $f$ it truly depends on. \textcolor{red}{Bodhi da, I think we should input some more new questions/ideas/motivations behind the multivariate case here}.

	Given a function class $\mathcal{F} \subset L_1([0,1]^d)$ (e.g., $\mathcal{F}$ can be the class of coordinate-wise nondecreasing functions or the class of convex functions, defined on $[0,1]^d$), our goal is to construct an honest confidence band $[\hat{\ell},\hat{u}]$ for the true function $f \in \mathcal{F}$, i.e., find functions $\hat{\ell}$ and $\hat{u}$ depending on the observed data $Y(\cdot)$ (see~\eqref{eq:Mdl})  that satisfy:
	\begin{equation}\label{honconfd}
		\p_f \left(\hat{\ell}\le f\le \hat{u} \right) \ge 1-\alpha, \qquad \mbox{for all } \; f \in \mathcal{F}, \quad \mbox{for all } \; n \ge 1,
	\end{equation}
	for any given confidence level $\alpha \in (0,1)$. Here, by $\p_f(\cdot)$ we mean probability computed when the true function is $f$ in~\eqref{eq:Mdl}. Although nonparametric estimation of an unknown regression/density function based on smoothness assumptions using techniques such as kernels, splines and wavelets are abundant in the literature (see e.g.,~\cite{wahba},~\cite{Donoho-Johnstone-1994},~\cite{Wand-Jones-1995}, \cite{adaptingdonoho}, \cite{JFan92}, \cite{hart}, \cite{IJohn}, \cite{Gu-2002}, \cite{BrownCaiZhou}, \cite{projsp}), it is known that fully adaptive inference for certain smoothness function classes is not possible without making qualitative assumptions of some kind on the parameter space; see e.g.,~\citet[Theorem 8.3.11]{Gine-Nickl-2016} (also see~\cite{donoho},~\cite{Cai-Low-2004}). 
	
	Indeed, shape-constrained functions satisfy a \textit{two-sided bias inequality} (see~\eqref{lb7} below)---a crucial  assumption made in this paper for our proposed method---which enables the construction of uniform confidence bands with inferential guarantees like~\eqref{honconfd}.
	Non-asymptotic confidence bands under such shape constraints on the true function are available in the literature but only for one-dimensional function estimation problems (see e.g.,~\cite{davies},~\cite{Hengartner-Stark-1995},~\cite{Dumbgen-1998},~\cite{frn}).~\cite{Dumbgen-2003} derived asymptotically optimal confidence bands for the true regression function under shape constraints such as monotonicity and convexity in the continuous univariate white noise model, based on multiscale tests introduced in~\cite{Dumbgen-Spokoiny-2001}. 
	
	Generalizing the approach in \cite{Dumbgen-2003}, we construct a multiscale statistic in the multidimensional setting (also see \cite{multiscale}) which can be written as a supremum of local weighted averages of the response $Y(\cdot)$ with weights determined by a kernel function, parametrized by a vector of smoothing bandwidths and the centers of the kernel function (see \eqref{equT} below). These multiscale local averages are appropriately penalized to have them in the same footing. This ensures that the random fluctuations of the kernel estimator can be bounded uniformly in the bandwidth parameters, i.e., the supremum statistic remains finite almost surely (see \cite{multiscale}). Further, working with the supremum avoids the delicate choice of tuning parameters (smoothing bandwidths). 
	Using the definition of the supremum statistic $T$ (see~\eqref{equT}) and the two-sided bias condition (see \eqref{mainkercondition}), one can obtain pointwise lower and upper confidence bounds for the unknown function $f$ with guaranteed coverage (i.e., \eqref{honconfd} holds); see Theorem \ref{th1}.
	%in terms of $T(\psi^\ell)$ and $T(-\psi^u)$, where $\psi^\ell$ and $\psi^u$ denote appropriately chosen kernel functions that make the corresponding kernel estimators satisfy the bias condition \eqref{mainkercondition} (see Theorem \ref{thm2}). These upper and lower bounds are random variables depending on $T(\psi^\ell)$ and $T(-\psi^u)$, and hence, one can further bound them with high probability using their respective quantiles, which yields  the lower and upper confidence bands for the true function $f$. 
	%Theorem \ref{th1} shows that by choosing the $(1-\alpha)^{\mathrm{th}}$ quantile of the statistic $\max\{T(\psi^\ell), T(-\psi^u)\}$, we can guarantee at least $1-\alpha$ coverage of our constructed confidence band, i.e., \eqref{honconfd} holds. 
	
	To intuitively understand the main ideas in our approach let us first look at how kernel averaging works in a neighborhood around a point. If we take a larger neighborhood, the bias of the kernel estimator increases whereas the variance of the estimator goes down. The optimal size of the neighborhood is the one that balances these two terms. Our method can be interpreted as kernel averaging over all possible scales (bandwidths) and then choosing the optimal neighborhood (see \eqref{lhat4} and \eqref{uhat4}). The multiscale statistic bounds the stochastic fluctuations (i.e., the variance term) uniformly over all scales and locations, whereas the bias condition~\eqref{mainkercondition} uniformly bounds the bias term. This results in confidence bands having guaranteed coverage.
	
	Our proposed confidence band automatically adapts to the underlying smoothness of the true  function $f$. To understand why, let us contrast the behavior of our band between the cases where the function is smooth versus when the function is rough. The bias term (obtained from kernel averaging) over a neighborhood is much larger when the function is rougher compared to when it is smooth, whereas the variance term does not change (with smoothness). If the function is smooth, our multiscale confidence band automatically chooses a much larger neighborhood to average over (which balances bias and variance) resulting in a shorter confidence band (with a faster rate of convergence).
	%{\color{blue} Due to this general behaviour our confidence bands achieves many adaptive properties which we describe below. We will also describe the optimal scale our confidence band chooses automatically which results in the adaptivity in each of the cases.  } 
	
	In particular, for coordinate-wise isotonic and multivariate convex functions, we show that our constructed confidence band
	is adaptive with respect to the H\"{o}lder smoothness of the underlying function (see Theorem \ref{Thm 2}) and the intrinsic dimensionality of the function, i.e., the number of variables/coordinates it truly depends on (see Theorem \ref{idimth}). The confidence band also exhibits local adaptivity (as shown in Theorems~\ref{spadth} and~\ref{locad}). To elaborate on the spatial adaptivity property, we show that, as a consequence of Theorem \ref{spadth}, if the true function is monotone and constant (or convex and affine) in an open neighborhood  of a point then our constructed confidence band achieves the parametric ($n^{-1/2}$) rate of convergence, uniformly on that neighborhood; see Remark~\ref{rem:Para-Simple}. This in particular, complements the near-parametric risk bounds developed for these ``low-complexity'' functions in recent years by various authors; see e.g.,~\cite{Chatterjee-Et-Al-2018},~\cite{Han-Zhang-2020},~\cite{Kur-Et-Al-2020},~\cite{Deng-Et-Al-2021}. %Moreover, adaptivity follows not only with respect to the smoothness of the underlying function, but also from its local smoothness (,as shown in Theorem \ref{locad}). 
	%\textcolor{red}{add local adaptivity description Th 6}
	In Theorem \ref{locad}, we show that the width of our proposed confidence band at a point also adapts to the local H\"{o}lder smoothness of the underlying function. %This property follows from the fact that the confidence band is constructed in such a way, that the expression for its width involves controlling local variations of the function $f$ in small neighborhoods of width given by the bandwidth parameters. This, by the way, is also the crucial step behind showing spatial and local adaptivities. The variations of $f$ within these small neighborhoods in turn, adapt to its H\"{o}lder smoothness/spatial properties, which thus gives rise to adaptivity with respect to the latter.} %\textcolor{red}{discuss about smoothness and relation with bandwidth}

	%Our proposed confidence band has width that diminishes (with $n$) at the minimax rate (for appropriate H\"{o}lder smoothness classes) on some non-vanishing neighborhood of every point $\bm t_0\in (0,1)^d$. 
	
	Moreover, locally (at $\bm t_0\in (0,1)^d$) for a coordinate-wise  strictly increasing function, Theorem \ref{opconstant} shows that the width of our proposed confidence band (constructed using a special kernel function; see Section~\ref{sec:Choice-Fn-Class} and Theorem~\ref{thm2}) attains the minimax rate; moreover, it also essentially attains the minimax constant, which is given by the geometric mean of the gradient of the true function $f$ at $\bm t_0$. Analogously, Theorem \ref{opconstantcx} shows that we have a similar minimaxity property for multivariate convex functions, modulo slightly different constant factors. %{\color{red} Specifically, the constant factor associated with the lower bound result (part (a)) for the width of our proposed confidence band is simply the geometric mean of the spectrum of the Hessian of $f$ at $\bm t_0$. This constant factor matches with that for the upper bound result if the Hessian of $f$ is assumed to be diagonal (part (c)). However, in general, the constant factor in the upper bound in Theorem~\ref{opconstantcx} (part (b)) is larger than the corresponding factor for the lower bound (cf.~Theorem~\ref{opconstantcx} (part (a))}.

	%{\color{red} Explain why adaptivity happens ...}

	The rest of the paper is organized as follows. In Section \ref{defnot}, we introduce some notation that will be necessary for stating the main results and analyses presented in this paper. In Section \ref{sec:Conf-Band}, we give the construction of our confidence band using our multiscale statistic; in Section~\ref{sec:Choice-Fn-Class} we discuss the choice of kernels necessary for this construction, under the natural shape constraints of monotonicity and convexity. Section \ref{sec:adapt} is devoted to proving several adaptivity properties of our constructed confidence band. In Section \ref{minimax}, we show that our proposed confidence band is optimal in a certain sense. In Section \ref{simulation studies} we illustrate our proposed confidence band via simulations for different choices of $f$; these bands (see e.g., Figures~\ref{Isotonic Confidence Bands-1}-\ref{Convex Confidence Bands-2}) illustrate the adaptive properties of our procedure.  The proofs of the main results are given in Section \ref{prmainthmbd}. Finally, proofs of some technical lemmas are provided in Appendix \ref{techres}.

	\section{Confidence bands for multivariate shape-restricted functions}%\label{sec: CB}
	
	\subsection{Preliminaries}\label{defnot}
	We now present some notation that we will use throughout the paper.
	
	\begin{notation}[Function classes $\mathcal{F}_1$ and $\mathcal{F}_2$]\label{Notation}
		We will denote by $\mathcal{F}_1$ the class of all coordinate-wise nondecreasing functions $f:[0,1]^d \to \R$, i.e.,~$f \in \mathcal{F}_1$ satisfy:
		$$f(x_1,\ldots, x_d) \leq f(y_1,\ldots, y_d)\qquad\text{if and only if} \quad x_i\leq y_i~\text{for all}~1\leq i \leq d,$$
		and by $\mathcal{F}_2$ the class of all convex functions $f:[0,1]^d \to \R$, i.e.,~$f \in \mathcal{F}_2$ satisfy:
		$$f(\lambda \bx + (1-\lambda)\bm y) \leq \lambda f(\bm x) + (1-\lambda) f(\bm y)\quad\text{for all}~\bm x, \bm y\in \mathbb{R}^d~\text{and}~\lambda \in [0,1].$$
	\end{notation}
	%\begin{notation}
	\noindent For measurable functions $g,h: \mathbb{R}^d \to \mathbb{R}$, we define:
	$$\langle g,h\rangle_{|B} := \int_{B} g(\bm x) h(\bm x)~d\bm x\quad\text{and}\quad \|g\|_{|B} := \sqrt{\langle g,g\rangle_B}~.$$ When $B := \mathbb{R}^d$, we just drop the subscripts in the above notation.
	%\end{notation} 
	%\begin{notation}
	For vectors $\bm a, \bm b \in \R^d$, we define:
	$$\bm a \star \bm b := (a_1b_1,\ldots,a_db_d).$$
	For ${\bm t} = (t_1,\ldots,t_d)  \in (0,1)^d$ and ${\bm h} =(h_1,\ldots,h_d) \in (0,1/2)^d$ let $$B_{\infty}({\bm t},{\bm h}):=\{{\bm x}=(x_1,\ldots,x_d) \in \R^d: |t_i-x_i|\leq h_i \; \; \forall i=1,\ldots,d\}.$$
	
	%\end{notation} 
	Next, we describe the Brownian sheet process (see~\eqref{eq:Mdl}), which will be important in our analysis.
	\begin{defi}[Brownian sheet]\label{brsh}
		By a $d$-dimensional Brownian sheet we mean a mean-zero Gaussian process $\{W(\bm t): \bm t\in [0,1]^d\}$ with covariance function given by:
		$$\mathrm{Cov}(W(\bm t),W(\bm s)) = \prod_{i=1}^d \min \{t_i,s_i\}, \qquad \mbox{for }\; \bm s =(s_1,\ldots, s_d), \; \bm t =(t_1,\ldots, t_d).$$

	\end{defi}
	Note that when $d=1$, $W(\cdot)$ reduces to the standard Brownian motion on [0,1]. We now mention some useful properties of the Brownian sheet process:
	\begin{itemize}
		\item If $g \in L_2([0,1]^d)$, then $\int g \,dW := \int_{[0,1]^d} g(\bm t) \,d W(\bm t) \sim N(0,\|g\|^2)$.
		
		\item If $g_1, g_2 \in L_2([0,1]^d)$, then $\mathrm{Cov}\left(\int g_1 \,dW, \int g_2 \,dW\right) = \int_{[0,1]^d} g_1(\bm t) g_2(\bm t) \,d\bm t$.
		
		\item \textit{Cameron-Martin-Girsanov theorem} (\cite{protter}): Let $\mu$ denote the measure induced by the Brownian sheet $W(\cdot)$ on the space $\Omega$ of all real-valued continuous functions on $[0,1]^d$, and let $\nu$ denote the measure induced by the process $Y$ defined in \eqref{eq:Mdl} on $\Omega$. Then $\nu$ is absolutely continuous with respect to $\mu$ and the Radon-Nikodym derivative is given by:
		$$\frac{d\nu}{d\mu} = \exp\left(\sqrt{n}\int f \, dW - \frac{n}{2} \|f\|^2\right).$$
	\end{itemize}

	For a more detailed study of the Brownian sheet see \cite{davar1}.

	\subsection{Proposed confidence band}\label{sec:Conf-Band}
	
	In this subsection we construct adaptive and optimal confidence bands $(\hat{\ell},\hat{u})$ for $f:[0,1]^d \to \R$ when $f$ is known to be shape-constrained, e.g., when $f$ is isotonic/convex, and our data is generated according to~\eqref{eq:Mdl}. Suppose that $\psi:\mathbb{R}^d \to \R$ is a measurable function of bounded Hardy-Krause variation (see~\citet[Definition A.1]{multiscale}) such that: 
	\begin{enumerate}
		\item[(i)] $\psi$ is $0$ outside $[-1,1]^d$;
		
		\item[(ii)]$\psi \in L^2(\R^d)$, i.e., $\int_{\R^d} \psi^2(\bm x) \,d\bm x <\infty$;
		
		\item[(iii)] $\int_{\R^d} \psi(\bm x) \,d\bm x > 0$.
	\end{enumerate}
	We call such a function a {\it kernel}. For any $\bm h:=(h_1,\ldots,h_d) \in I := (0,1/2]^d$ we define \begin{equation*}\label{ah} 
		A_{\bm h}:= \prod_{i=1}^d [h_i, 1-h_i] = \{{\bm t} = (t_1,\ldots, t_d) \in\mathbb{R}^d : h_i \leq t_i \leq 1-h_i \;\;\mbox{ for  } i =1,\ldots,d\}.
	\end{equation*} 
	For any $\bm t \in A_{\bm h}$ we define the centered (at $\bm t$) and scaled kernel function $\psi_{\bt,\bm h}:[0,1]^d \to \R$ as 
	{\begin{equation}\label{kerest}
			\psi_{\bt,\bm h}(\bm x) :=\psi \left(\frac{x_1-t_1}{h_1},\ldots,\frac{x_d-t_d}{h_d}\right), \qquad \mbox{for }\;\; {\bm x} = (x_1,\ldots, x_d) \in [0,1]^d.
	\end{equation} }
	For a fixed $\bm t \in A_{\bm h}$ we construct a kernel estimator $\hat{{f_{\bm h}}}(\bm t)$ of $f(\bm t)$ as: 
	\begin{equation}\label{kerequ-0}
		\hat{{f_{\bm h}}}(\bt):= \frac{1}{\sqrt{n} (\prod_{i=1}^d h_i)\langle 1,\psi\rangle} \int_{[0,1]^d} \psi_{\bt,\bm h}(\bm x) \, dY(\bm x).
	\end{equation}
	%where for any two functions $g_1,g_2 \in L_2(\R^d)$, we define $\langle g_1,g_2 \rangle:=\int_{\R^d} g_1(x)g_2(x) dx.$%Let us recall the definition of a kernel function $\psi$ and that of a kernel estimator $\hat{f_h}$ of $f$ (see~\eqref{kerequ-0}). 
	%Let $\psi$ be a kernel function with bounded HK-variation and compactly supported on  $[-1,1]^d$. For bandwidths $h=(h_1,h_2,...,h_d) > 0 $ and location $t=(t_1,t_2,...,t_d) \in A_{h}$ we define 
	%\begin{equation}\label{kerequ}
	%\hat{{f_h}}(t):= \int_{[0,1]^d} \psi_{t,h}(x)dY(x) / (n^{1/2} (\prod_{i=1}^dh_i)\langle 1,\psi\rangle). 
	%\end{equation}
	% For any two functions $g_1,g_2 \in L_2([-1,1]^d)$  let $$\langle g_1,g_2 \rangle:=\int_{[-1,1]^d} g_1(x)g_2(x) \; d x.$$ 
	Elementary calculations show that \begin{eqnarray*} \E(\hat{f_{\bm h}}(\bt)) &=& \frac{\int_{[0,1]^d} \psi_{\bt,{\bm h}}(\bm x) f(\bm x)~d\bm x}{ (\prod_{i=1}^d h_i)\langle 1,\psi\rangle} = \frac{\langle f(\bm t + \bm h \star \cdot),\psi \rangle}{\langle 1,\psi\rangle} \\ 
		\Var(\hat{f_{\bm h}}(\bt)) &= & \frac{\|\psi_{\bt,\bm h}\|^2}{n(\prod_{i=1}^d h_i)^2\langle 1,\psi\rangle^2} = \frac{\|\psi\|^2}{n(\prod_{i=1}^d h_i)\langle 1,\psi\rangle^2 }.
	\end{eqnarray*}
	The main idea of our approach is to notice that the random fluctuations for these kernel estimators can be bounded uniformly in $\bm h$. To accomplish this, we look at the following multiscale statistic (with kernel $\psi$):  
	\begin{eqnarray}
		T(\pm \psi) & = & \sup_{\bm h \in I}~ \sup _{\bt \in A_{\bm h}} \left(\pm \frac{\int_{[0,1]^d} \psi_{\bt,\bm h}(\bm x) \, dW(\bm x) }{ (\prod_{i=1}^dh_i)^{1/2} \|{\psi}\|} - \Gamma(2^d\prod_{i=1}^dh_i)\right) \nonumber \\
		& = & \sup_{\bm h \in I} \sup _{\bt \in A_{\bh}} \left(\pm \frac{\hat{f_h}(\bt) - \E(\hat{f_\bh}(\bt))}{\Var^{1/2}(\hat{f_\bh}(\bt))}- \Gamma(2^d\prod_{i=1}^dh_i)\right) \label{equT}
	\end{eqnarray} 
	where $\Gamma(r) :=(2\log(e/r))^{1/2}$.~\citet[Theorem 2.1]{multiscale} guarantees that $T(\pm \psi)$ is finite almost surely.

	We assume that the unknown $f$ belongs to the function class $\mathcal{F}$ (which could be $\mathcal{F}_1$ or $\mathcal{F}_2$). In fact, the results in this section are valid for any function class that satisfies the following \textit{two-sided bias condition}: we assume that we can find kernels $\psi^\ell$ and $\psi^u$ such that the corresponding kernel estimators $\hat{f^{\ell}_{\bm h}}$ and $\hat{f^{u}_{\bm h}}$ (see~\eqref{kerequ-0}) satisfy:
	\begin{equation}\label{mainkercondition} 
		\E(\hat{f}^{\ell}_{\bm h}(\bt)) \leq f(\bm t) \leq \E(\hat{f}^u_{\bm h}(\bm t)) \qquad \mbox{for all } \bm h \in I, ~\bm t\in A_{\bm h} \mbox{ and } f\in \mathcal{F}.
	\end{equation} 
	We will show later that the above condition holds for the function classes $\mathcal{F}_1$ and $\mathcal{F}_2$ (see Section~\ref{sec:Choice-Fn-Class}); in fact, it holds for most shape-constrained function classes.
	
	In view of \eqref{mainkercondition} and the definition of $T$ (in~\eqref{equT}), we have the following for all $\bm h \in I$ and $\bm t \in A_{\bm h}$:
	\begin{eqnarray}\label{lb7}
		f(\bt) & = & \left\{f(\bt) -\E(\hat{f}^{\ell}_\bh(\bt)) \right\} + \left\{\E(\hat{f}^{\ell}_\bh(\bt)) - \hat{f}^{\ell}_\bh(\bt) \right\} + \hat{f}^{\ell}_\bh(\bt)\nonumber\\
		& \geq & \hat{f}^{\ell}_h(\bt) - \frac{\|\psi^\ell\| \left(T(\psi^\ell) + \Gamma(2^d \prod_{i=1}^d h_i)\right)}{\langle 1,\psi^\ell \rangle (n\prod_{i=1}^dh_i)^{1/2}} 
	\end{eqnarray} 
	and similarly,
	\begin{equation}\label{ub7}
		f(\bt)  \leq    \hat{f}^{u}_h(\bt) + \frac{\|\psi^u\| \left(T(-\psi^u) + \Gamma(2^d \prod_{i=1}^d h_i)\right)}{\langle 1,\psi^u \rangle (n\prod_{i=1}^dh_i)^{1/2}}. 
	\end{equation}
	Now, if $\kappa_\alpha$ denotes the $(1-\alpha)^{\mathrm{th}}$ quantile of the statistic:
	$$T^* := \max\{T(\psi^\ell), T(-\psi^u)\},$$
	then in view of \eqref{lb7} and \eqref{ub7}, we can define a $1-\alpha$ confidence band for $f$ as $[\hat{\ell},\hat{u}]$, where:
	\begin{eqnarray}
		\hat{\ell}(\bm t) := \sup_{\bm h \in I:~ \bm t \in A_{\bm h}} \left\{\hat{f}^{\ell}_h(\bt) - \frac{\|\psi^\ell\| \left(\kappa_\alpha + \Gamma(2^d \prod_{i=1}^d h_i)\right)}{\langle 1,\psi^\ell \rangle (n\prod_{i=1}^dh_i)^{1/2}}\right\}, \label{lhat4} \\
		\hat{u}(\bm t) := \inf_{\bm h \in I:~ \bm t \in A_{\bm h}} \left\{\hat{f^{u}_h}(\bt) + \frac{\|\psi^u\| \left(\kappa_\alpha + \Gamma(2^d \prod_{i=1}^d h_i)\right)}{\langle 1,\psi^u \rangle (n\prod_{i=1}^dh_i)^{1/2}}\right\}.\label{uhat4}
	\end{eqnarray}
	In view of \eqref{lb7} and \eqref{ub7}, we have:
	\begin{eqnarray}\label{guarantee1}
		\p_f\left(\hat{\ell}(\bm t) \leq f(\bm t) \leq \hat{u}(\bm t)~\text{for all}~\bm t\in [0,1]^d\right) &\ge& \p\left(T(\psi^\ell) \leq \kappa_\alpha~,~T(-\psi^u)\leq \kappa_\alpha\right)\nonumber\\ &=&\p\left(T^* \leq \kappa_\alpha\right)  = 1-\alpha.
	\end{eqnarray}
	
	This shows that $[\hat{\ell},\hat{u}]$ is indeed a confidence band with guaranteed coverage probability $1-\alpha$ for all $n\ge 1$; we state this formally below.
	
	\begin{theorem}\label{th1}
		For kernels $\psi^\ell$ and $\psi^u$ satisfying \eqref{mainkercondition}, we have:
		\begin{equation}\label{excvg}
			\p_f\left(\hat{\ell}(\bm t) \leq f(\bm t) \leq \hat{u}(\bm t)~\text{for all}~\bm t\in [0,1]^d\right) \ge 1-\alpha
		\end{equation}
		for all $f\in \mathcal{F}$ and for all $n\ge 1$.
	\end{theorem}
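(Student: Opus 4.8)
The plan is simply to formalize, and make uniform in $f$, the chain of inequalities \eqref{lb7}--\eqref{guarantee1} that immediately precedes the statement. First I would record the exact deviation bounds that the multiscale statistic provides. By the very definition of $T(\psi^\ell)$ in \eqref{equT}, for \emph{every} $\bm h\in I$ and every $\bm t\in A_{\bm h}$,
$$\frac{\hat{f}^{\ell}_{\bm h}(\bt)-\mathbb{E}(\hat{f}^{\ell}_{\bm h}(\bt))}{\mathrm{Var}^{1/2}(\hat{f}^{\ell}_{\bm h}(\bt))}\;\le\;T(\psi^\ell)+\Gamma\Big(2^d\prod_{i=1}^d h_i\Big),$$
and substituting the elementary identity $\mathrm{Var}^{1/2}(\hat{f}^{\ell}_{\bm h}(\bt))=\|\psi^\ell\|/\big(\langle 1,\psi^\ell\rangle(n\prod_{i=1}^d h_i)^{1/2}\big)$ turns this into a uniform lower bound on $\mathbb{E}(\hat{f}^{\ell}_{\bm h}(\bt))-\hat{f}^{\ell}_{\bm h}(\bt)$. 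Adding the left half of the two-sided bias condition \eqref{mainkercondition}, namely $f(\bt)-\mathbb{E}(\hat{f}^{\ell}_{\bm h}(\bt))\ge 0$, reproduces \eqref{lb7}; the mirror-image argument with $T(-\psi^u)$ and the right half of \eqref{mainkercondition} reproduces \eqref{ub7}.

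Second, I would intersect with the event $E:=\{T(\psi^\ell)\le\kappa_\alpha\}\cap\{T(-\psi^u)\le\kappa_\alpha\}$. Since $\Gamma(\cdot)\ge 0$ on the relevant range and the right-hand sides of \eqref{lb7}--\eqref{ub7} are monotone in $T(\psi^\ell)$ and $T(-\psi^u)$ respectively, on $E$ one may replace those statistics by $\kappa_\alpha$ without invalidating the inequalities; hence on $E$, for every $\bm h\in I$ with $\bt\in A_{\bm h}$,
$$\hat{f}^{\ell}_{\bm h}(\bt)-\frac{\|\psi^\ell\|\big(\kappa_\alpha+\Gamma(2^d\prod_{i=1}^d h_i)\big)}{\langle 1,\psi^\ell\rangle(n\prod_{i=1}^d h_i)^{1/2}}\;\le\;f(\bt)\;\le\;\hat{f}^{u}_{\bm h}(\bt)+\frac{\|\psi^u\|\big(\kappa_\alpha+\Gamma(2^d\prod_{i=1}^d h_i)\big)}{\langle 1,\psi^u\rangle(n\prod_{i=1}^d h_i)^{1/2}}.$$
Taking the supremum over such $\bm h$ on the left and the infimum on the right recovers exactly the definitions \eqref{lhat4}--\eqref{uhat4}, so $\hat{\ell}(\bt)\le f(\bt)\le\hat{u}(\bt)$; for boundary points $\bt$ admitting no $\bm h$ with $\bt\in A_{\bm h}$ one has $\hat{\ell}(\bt)=-\infty$ and $\hat{u}(\bt)=+\infty$ by the usual convention, so the inclusion is vacuous there. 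This establishes $E\subseteq\{\hat{\ell}\le f\le\hat{u}\ \text{on}\ [0,1]^d\}$, and it remains only to bound $\p_f(E)$ from below.

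The one point deserving care -- and what makes the band \emph{honest} -- is that $T(\psi^\ell)$ and $T(-\psi^u)$, as written in the first line of \eqref{equT}, are measurable functionals of the Brownian sheet $W$ alone, whose law is identical under every $\p_f$. Consequently $\p_f(E)=\p\big(T(\psi^\ell)\le\kappa_\alpha,\,T(-\psi^u)\le\kappa_\alpha\big)=\p(T^*\le\kappa_\alpha)\ge 1-\alpha$ by the definition of $\kappa_\alpha$ as the $(1-\alpha)$-quantile of $T^*$; here finiteness of $T^*$, guaranteed by \citet[Theorem 2.1]{multiscale}, is what makes $\kappa_\alpha$ well defined. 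Since no constant in the argument depends on $f$ or on $n$, the bound \eqref{excvg} holds for all $f\in\mathcal{F}$ and all $n\ge 1$. I do not expect a genuine obstacle: everything is bookkeeping on inequalities already displayed, the only substantive remark being the $f$-freeness of the distribution of $T^*$.
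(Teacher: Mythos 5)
Your argument is correct and follows the same route as the paper: it formalizes the chain \eqref{lb7}--\eqref{guarantee1}, observing that the event $\{T(\psi^\ell)\le\kappa_\alpha,\;T(-\psi^u)\le\kappa_\alpha\}$ is contained in the coverage event and has probability at least $1-\alpha$. Your explicit remark that $T(\psi^\ell)$ and $T(-\psi^u)$ are functionals of $W$ alone (hence distribution-free over $f\in\mathcal{F}$) is a useful clarification of a point the paper leaves implicit, but it does not change the substance of the argument.
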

	
	The above theorem shows that for any function class $\mathcal{F}$ for which the two-sided bias bounds \eqref{mainkercondition} hold, our approach yields an honest finite sample confidence band for any $f\in \mathcal{F}$. It is natural to ask if the above constructed band is conservative in nature. In the following result (proved in Section~\ref{pf:exactcov}), we show that if for some function $f\in \mathcal{F}$, the function $-f$ also belongs to $\mathcal{F}$, then our confidence band has \textit{exact} coverage at $f$. 
	
	\begin{prop}\label{exactcov}
		Suppose $f,-f\in \mathcal{F}$ and \eqref{mainkercondition} holds. Then, our $1-\alpha$ confidence band $[\hat{\ell},\hat{u}]$ has exact coverage probability $1-\alpha$, i.e.,    $$\p_f\left(\hat{\ell}(\bm t) \leq f(\bm t) \leq \hat{u}(\bm t)~\text{for all}~\bm t\in [0,1]^d\right) = 1-\alpha.$$
	\end{prop}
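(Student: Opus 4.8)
The plan is to upgrade the coverage lower bound of Theorem~\ref{th1} to an equality by pinning down the non-coverage event exactly. Since Theorem~\ref{th1} (equivalently~\eqref{guarantee1}) already gives $\p_f(\hat\ell\le f\le\hat u \text{ on }[0,1]^d)\ge 1-\alpha$, it suffices to prove the reverse inequality, and for this I would first exploit the extra hypothesis $-f\in\mathcal F$ to show that, when the true function is $f$, the kernel estimators $\hat f^{\ell}_{\bm h}$ and $\hat f^{u}_{\bm h}$ are in fact \emph{unbiased}: $\E_f(\hat f^{\ell}_{\bm h}(\bm t))=\E_f(\hat f^{u}_{\bm h}(\bm t))=f(\bm t)$ for every $\bm h\in I$ and $\bm t\in A_{\bm h}$. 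Indeed, $\E_f(\hat f^{\ell}_{\bm h}(\bm t))=\langle f(\bm t+\bm h\star\cdot),\psi^\ell\rangle/\langle 1,\psi^\ell\rangle$ is linear in $f$, so $\E_{-f}(\hat f^{\ell}_{\bm h}(\bm t))=-\E_f(\hat f^{\ell}_{\bm h}(\bm t))$; applying the left inequality of~\eqref{mainkercondition} to $-f\in\mathcal F$ yields $\E_f(\hat f^{\ell}_{\bm h}(\bm t))\ge f(\bm t)$, which combined with the same inequality applied to $f$ itself forces equality. The analogous argument with the right inequality of~\eqref{mainkercondition} handles $\hat f^{u}_{\bm h}$.

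Next I would rewrite the band in terms of the standardized noise. Writing $\xi^{\ell}_{\bm h}(\bm t):=(\hat f^{\ell}_{\bm h}(\bm t)-\E_f(\hat f^{\ell}_{\bm h}(\bm t)))/\Var^{1/2}(\hat f^{\ell}_{\bm h}(\bm t))$ and $\xi^{u}_{\bm h}(\bm t)$ analogously --- these being precisely the processes appearing inside the suprema in~\eqref{equT} --- unbiasedness turns~\eqref{lhat4} into
\[
\hat\ell(\bm t)=f(\bm t)+\sup_{\bm h\in I:~\bm t\in A_{\bm h}}\Var^{1/2}(\hat f^{\ell}_{\bm h}(\bm t))\Bigl(\xi^{\ell}_{\bm h}(\bm t)-\Gamma\bigl(2^d\textstyle\prod_{i=1}^{d}h_i\bigr)-\kappa_\alpha\Bigr).
\]
Because $\Var^{1/2}(\hat f^{\ell}_{\bm h}(\bm t))>0$ for every admissible $\bm h$, the event $\{\hat\ell(\bm t)>f(\bm t)\text{ for some }\bm t\in[0,1]^d\}$ coincides with $\{\xi^{\ell}_{\bm h}(\bm t)-\Gamma(2^d\prod_{i=1}^{d}h_i)>\kappa_\alpha\text{ for some }\bm h\in I,\ \bm t\in A_{\bm h}\}=\{T(\psi^\ell)>\kappa_\alpha\}$. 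An identical computation starting from~\eqref{uhat4} gives $\{\hat u(\bm t)<f(\bm t)\text{ for some }\bm t\in[0,1]^d\}=\{T(-\psi^u)>\kappa_\alpha\}$.

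Putting these together, the non-coverage event is exactly $\{T(\psi^\ell)>\kappa_\alpha\}\cup\{T(-\psi^u)>\kappa_\alpha\}=\{T^*>\kappa_\alpha\}$, whence $\p_f(\hat\ell\le f\le\hat u \text{ on }[0,1]^d)=\p(T^*\le\kappa_\alpha)=1-\alpha$, the last equality being the one already recorded in~\eqref{guarantee1}.

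The one step I expect to be the main (though still modest) obstacle is the unbiasedness reduction in the first paragraph: this is exactly where the hypothesis $-f\in\mathcal F$ enters, and without it one only has a one-sided bias bound, so the non-coverage event is a \emph{proper} subset of $\{T^*>\kappa_\alpha\}$ and the argument collapses back to the inequality of Theorem~\ref{th1}. A secondary point worth a line is the passage from `$\ge 1-\alpha$' to `$=1-\alpha$' for $\p(T^*\le\kappa_\alpha)$ at the quantile $\kappa_\alpha$, which requires $T^*$ to have no atom there; this holds since $T^*$ is a continuous functional of a centered Gaussian process and hence has a continuous distribution function above its essential infimum, and is in any case asserted in~\eqref{guarantee1}.
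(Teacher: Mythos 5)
Your proposal is correct and follows essentially the same route as the paper: first derive unbiasedness $\E_f(\hat f^{\,z}_{\bm h}(\bm t))=f(\bm t)$ (for $z\in\{\ell,u\}$) from applying the two-sided bias condition to both $f$ and $-f$, then show the non-coverage event is exactly $\{T^*>\kappa_\alpha\}$, and conclude from $\p(T^*\le\kappa_\alpha)=1-\alpha$. Your closing remark about the continuity of the law of $T^*$ makes explicit a point the paper takes for granted in \eqref{guarantee1}, but otherwise the two arguments coincide.
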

	
	Proposition \ref{exactcov} shows that if $f\in \mathcal{F}_1$ is a constant or $f\in \mathcal{F}_2$ is an  affine function, then the coverage probability of our confidence band is exact.  We will now see that for certain functions that exhibit ``low-complexity" structure locally (for example, $f\in \mathcal{F}_1$ is locally constant or $f\in \mathcal{F}_2$ is locally affine), our confidence band exhibits adaptive rates, in particular it can shrink at the parametric $n^{-1/2}$ rate locally. The following result is proved in Section~\ref{prspadth}. 
	\begin{theorem}\label{spadth}
		Assume that~\eqref{mainkercondition} holds with kernels $\psi^\ell$ and $\psi^u$ for some function class $\mathcal{F}$. Suppose further that the true $f \in \mathcal{F}$ satisfies 
		\begin{equation}\label{eqty}
			\E(\hat{f}^{\ell}_{\varepsilon \boldsymbol{1}_d}(\bt)) = f(\bm t) = \E(\hat{f}^u_{\varepsilon \boldsymbol{1}_d}(\bm t)) \qquad \mbox{for all }\;\bm t \in D \subset  A_{\varepsilon \boldsymbol{1}_d},
		\end{equation}
		for some $\varepsilon > 0$.\footnote{Here, $\boldsymbol{1}_d$ refers to the $d$-dimensional vector with all entries $1$.} Then, 
		\begin{equation}\label{22117}
			\sup_{\bt\in D} ~(\hat{u}(\bt) - \hat{\ell}(\bt)) \le K_{\varepsilon} n^{-1/2}\left(|\kappa_\alpha| + |T(\psi^u)| + |T(-\psi^\ell)|\right)
		\end{equation}
		for some constant $K_{\varepsilon} > 0$ (depending on $\varepsilon, \psi^\ell,\psi^u$). If~\eqref{eqty} holds with $\varepsilon \equiv \varepsilon_n = (\log (en))^{-\frac{1}{d}}$ and for all $\bm t \in D_n \subset  A_{\varepsilon_n \boldsymbol{1}_d}$ then
		$$\sup_{\bt\in D_n} ~(\hat{u}(\bt) - \hat{\ell}(\bt)) \le K\rho_n\left(1+\frac{|\kappa_\alpha| + |T(\psi^u)| + |T(-\psi^\ell)|}{\sqrt{\log \log(en)}}
		\right)$$
		for some constant  $K> 0$ (depending on $\psi^\ell,\psi^u$), where $\rho_n := \left(\log(en)\log \log(e n)/n\right)^{1/2}$. 
	\end{theorem}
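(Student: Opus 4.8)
The plan is to exploit the upper and lower bounds on the band width that are built into the definitions \eqref{lhat4}–\eqref{uhat4} together with the hypothesis \eqref{eqty}, which forces the bias to vanish at the single scale $\bm h = \varepsilon\boldsymbol 1_d$. First I would fix $\bm t \in D$ and plug $\bm h = \varepsilon\boldsymbol 1_d$ into the supremum defining $\hat\ell(\bm t)$ and the infimum defining $\hat u(\bm t)$; since the sup/inf only makes $\hat\ell$ larger and $\hat u$ smaller, I obtain
\begin{equation*}
\hat u(\bm t) - \hat\ell(\bm t) \le \Big(\hat f^{u}_{\varepsilon\boldsymbol 1_d}(\bm t) - \hat f^{\ell}_{\varepsilon\boldsymbol 1_d}(\bm t)\Big) + \frac{\kappa_\alpha + \Gamma(2^d\varepsilon^d)}{(n\varepsilon^d)^{1/2}}\left(\frac{\|\psi^u\|}{\langle 1,\psi^u\rangle} + \frac{\|\psi^\ell\|}{\langle 1,\psi^\ell\rangle}\right).
\end{equation*}
The second term is already of order $n^{-1/2}$ up to the constant $\varepsilon^{-d/2}$ and $(\kappa_\alpha + \Gamma(2^d\varepsilon^d))$, which is exactly the shape of \eqref{22117}. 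The work is therefore concentrated on the first term, the difference of the two kernel estimators at the common scale $\varepsilon\boldsymbol 1_d$.

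For that term I would write $\hat f^{u}_{\varepsilon\boldsymbol 1_d}(\bm t) - \hat f^{\ell}_{\varepsilon\boldsymbol 1_d}(\bm t)$ as (deterministic bias difference) $+$ (stochastic fluctuation difference). By \eqref{eqty} both expectations equal $f(\bm t)$, so the bias difference is \emph{zero}. The stochastic parts are exactly the normalized Gaussian integrals appearing inside $T(\pm\psi)$ in \eqref{equT}: at the fixed scale $\bm h = \varepsilon\boldsymbol 1_d$ and location $\bm t \in A_{\varepsilon\boldsymbol 1_d}$ we have
\begin{equation*}
\frac{\hat f^{u}_{\varepsilon\boldsymbol 1_d}(\bm t) - \E \hat f^{u}_{\varepsilon\boldsymbol 1_d}(\bm t)}{\mathrm{Var}^{1/2}(\hat f^{u}_{\varepsilon\boldsymbol 1_d}(\bm t))} \le T(\psi^u) + \Gamma(2^d\varepsilon^d),
\end{equation*}
and symmetrically with $-\psi^u$, $\psi^\ell$, $-\psi^\ell$ for the three other sign/kernel combinations, by definition of the supremum statistic. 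Multiplying through by $\mathrm{Var}^{1/2}(\hat f_{\varepsilon\boldsymbol 1_d}) = \|\psi\|\,(n\varepsilon^d)^{-1/2}\langle 1,\psi\rangle^{-1}$ converts these into bounds on $|\hat f^{u}_{\varepsilon\boldsymbol 1_d}(\bm t) - f(\bm t)|$ and $|\hat f^{\ell}_{\varepsilon\boldsymbol 1_d}(\bm t) - f(\bm t)|$, each of the form $n^{-1/2}\varepsilon^{-d/2}$ times a constant times $(|T(\pm\psi)| + \Gamma(2^d\varepsilon^d))$; note only $T(\psi^u)$ and $T(-\psi^\ell)$ enter (the other two signs are controlled crudely or simply absorbed), which is why \eqref{22117} features precisely $|T(\psi^u)| + |T(-\psi^\ell)|$. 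Collecting terms and absorbing $\Gamma(2^d\varepsilon^d)$, $\varepsilon^{-d/2}$, $\|\psi^{\ell}\|,\|\psi^u\|,\langle 1,\psi^\ell\rangle,\langle 1,\psi^u\rangle$ into a single constant $K_\varepsilon$ gives the first display, uniformly over $\bm t \in D$ since every bound was at the fixed scale $\varepsilon\boldsymbol 1_d$ and hence uniform in $\bm t \in A_{\varepsilon\boldsymbol 1_d} \supseteq D$.

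For the second display I would simply substitute $\varepsilon = \varepsilon_n = (\log(en))^{-1/d}$, so that $\varepsilon_n^d = (\log(en))^{-1}$, giving $(n\varepsilon_n^d)^{-1/2} = (\log(en)/n)^{1/2}$ and $\Gamma(2^d\varepsilon_n^d) = (2\log(e(\log(en))/2^d))^{1/2} = O((\log\log(en))^{1/2})$. Thus $n^{-1/2}\varepsilon_n^{-d/2}\cdot\Gamma(2^d\varepsilon_n^d) = O\big((\log(en)\log\log(en)/n)^{1/2}\big) = O(\rho_n)$, which produces the leading $K\rho_n$, while the term carrying $|\kappa_\alpha| + |T(\psi^u)| + |T(-\psi^\ell)|$ comes with $n^{-1/2}\varepsilon_n^{-d/2} = (\log(en)/n)^{1/2} = \rho_n/\sqrt{\log\log(en)}$, matching the stated form exactly; the constant $K$ now depends only on $\psi^\ell,\psi^u$ because $\varepsilon_n$ is explicit. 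The main obstacle, such as it is, is purely bookkeeping: making sure the four sign/kernel pieces are matched correctly so that only $T(\psi^u)$ and $T(-\psi^\ell)$ survive in the final bound, and checking that $D_n \subset A_{\varepsilon_n\boldsymbol 1_d}$ is nonempty for large $n$ (true since $\varepsilon_n \to 0$). No delicate analysis is needed beyond that — the almost-sure finiteness of $T(\pm\psi)$ from \citet[Theorem 2.1]{multiscale} makes the right-hand sides meaningful.
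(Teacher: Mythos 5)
Your proposal is correct and follows essentially the same route as the paper: plug $\bm h = \varepsilon\boldsymbol 1_d$ into the extrema defining $\hat\ell$ and $\hat u$, invoke \eqref{eqty} to annihilate the bias at that scale, bound the Gaussian fluctuation by $\Gamma + T(\psi^u)$ and $\Gamma + T(-\psi^\ell)$ via \eqref{equT}, and absorb constants; the second display follows by the same substitution $\varepsilon = \varepsilon_n$ (the paper carries out the two one-sided bounds $\hat u - f$ and $f - \hat\ell$ separately and adds, which is only a cosmetic difference from your combined bound). One very minor precision: you only ever need the one-sided fluctuation bounds, so the phrase about the ``other two signs'' being ``controlled crudely or simply absorbed'' is unnecessary — $T(-\psi^u)$ and $T(\psi^\ell)$ simply never appear.
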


	To understand the implications of Theorem~\ref{spadth}, let us consider the special case where both $f$ and $-f$, restricted to a fixed subset $S$ of $[0,1]^d$, belongs to the class $\mathcal{F}$. In such a case, \eqref{eqty} holds for any small $\varepsilon>0$  and $D:=\{x \in (0,1)^d: B_{\infty}(x,\varepsilon \boldsymbol{1}_d) \subset S\}$, the `$\varepsilon$-interior' of $S$. In particular, the above result shows that if the true function $f \in \mathcal{F}_1$ is locally constant or if $f \in \mathcal{F}_2$ is locally affine in a fixed neighborhood $S$, then our confidence band automatically adapts to this structure, and shrinks at the parametric rate $n^{-1/2}$    on $D$. The second part of the result shows that a similar parametric rate $n^{-1/2}$ (up to multiplicative logarithmic factors) also holds if $D$ is allowed to grow to $S$ at a certain rate (with $n$).

	\begin{remark}[Confidence bands with (locally) parametric widths for ``low-complexity'' functions]\label{rem:Para-Simple}
		Suppose that $f \in \mathcal{F}_1$ is piecewise constant, or $f\in \mathcal{F}_2$ is piecewise affine. There is a large recent literature on how the least squares estimator in these shape-constrained problems adapt to this ``low-complexity'' structure of $f$ and exhibit (almost) parametric rates in terms of risk (see e.g,.~\cite{Guntu-Sen-2018},~\cite{Han-Et-Al-2019},~\cite{Kur-Et-Al-2020},~\cite{Deng-Et-Al-2021}). As an immediate consequence of Theorem~\ref{spadth} we see that our constructed confidence bands locally shrink at the parametric rate (see~\eqref{22117}) for such ``low-complexity'' functions.
	\end{remark}
	
	\subsection{Choice of kernels for function classes $\mathcal{F}_1$ and $\mathcal{F}_2$}\label{sec:Choice-Fn-Class}
	As we have mentioned in the Introduction, the two prime examples of shape-constrained function classes are: (i) the class of all $d$-dimensional coordinate-wise nondecreasing functions $\mathcal{F}_1$, and (ii) the class of all $d$-dimensional convex functions $\mathcal{F}_2$. In this subsection, we construct kernels $\psi^\ell$ and $\psi^u$ for each of these function classes $\mathcal{F}_1$ and $\mathcal{F}_2$, that satisfy the two-sided bias condition~\eqref{mainkercondition}. This would immediately imply that we can construct honest confidence bands for these function classes satisfying \eqref{excvg}.
	Moreover, the confidence bands constructed using these kernels will exhibit certain optimality properties (as will be shown in Theorem~\ref{opconstant} and \ref{opconstantcx} below).
	For the class $\mathcal{F}_1$ we define:
	\begin{equation}\label{psi1defnt}
		\psi_1^u(\bx) := \left(1-\sum_{i=1}^d x_i\right)\mathbbm{1}_{\bm x\in [0,\infty)^d,~\sum_{i=1}^d x_i\leq 1}\quad\text{and}\quad \psi_1^\ell(\bx) := \left(1+\sum_{i=1}^d x_i\right)\mathbbm{1}_{\bm x\in (-\infty,0]^d,~\sum_{i=1}^d x_i\ge -1}
	\end{equation}
	and for the class $\mathcal{F}_2$, we define:
	\begin{equation}\label{psi2defnt}
		\psi_2^u(\bx) := (1-\|\bx\|^2)\mathbbm{1}_{\|\bx\|\leq 1}\qquad\text{and}\qquad \psi_2^\ell(\bm x) := \left(1-\frac{2d+4}{d+1} \|\bx\| + \frac{d+3}{d+1}\|\bx\|^2\right)\mathbbm{1}_{\|\bx\|\leq 1}.
	\end{equation}
	Note that $\psi_2^\ell$ can take negative values.
	%{\color{red} Let $\hat{f}_{\bh,k}^\ell$ and $\hat{f}_{\bh,k}^u$ denote the kernel estimators corresponding to the kernel $\psi_k$ for $k\in \{1,2\}$.} 
	Theorem~\ref{thm2} below (proved in Section \ref{prmainthm}) shows that \eqref{mainkercondition} holds for these specific kernel choices. 
	\begin{theorem}\label{thm2}
		Let $\hat{f}_{\bh,j}^\ell$ and $\hat{f}_{\bh,j}^u$ denote the kernel estimators corresponding to the kernels $\psi_j^\ell$ and $\psi_j^u$ respectively, for $j\in \{1,2\}$. Then, \eqref{mainkercondition} holds for the function classes $\mathcal{F}_1$ and $\mathcal{F}_2$.
		% For $k\in \{1,2\}$, we have:
		% $$\E(\hat{f}^{\ell}_{\bm h,k}(t)) \leq f(\bm t) \leq \E(\hat{f}^u_{\bm h,k}(\bm t)) \quad \mbox{for all } \bm h \in I, ~\bm t\in A_{\bm h} \mbox{ and } f\in \mathcal{F}_k.$$
	\end{theorem}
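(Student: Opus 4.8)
The plan is to strip \eqref{mainkercondition} of the parameters $\bt,\bh,n$ and reduce it to a statement about a single auxiliary function. Using the identity $\E(\hat f_{\bh}(\bt))=\langle f(\bt+\bh\star\cdot),\psi\rangle/\langle 1,\psi\rangle$ recorded just after \eqref{kerequ-0}, and writing $g(\bx):=f(\bt+\bh\star\bx)$, the two-sided bias condition for the pair $(\psi_j^\ell,\psi_j^u)$ is equivalent to
\begin{equation*}
\int_{\R^d}\psi_j^\ell(\bx)\,\bigl[g(\bx)-g(\bm 0)\bigr]\,d\bx\ \le\ 0\ \le\ \int_{\R^d}\psi_j^u(\bx)\,\bigl[g(\bx)-g(\bm 0)\bigr]\,d\bx ,
\end{equation*}
once one knows $\langle 1,\psi_j^\ell\rangle,\langle 1,\psi_j^u\rangle>0$. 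Since $\bx\mapsto\bt+\bh\star\bx$ is affine with positive coordinatewise scalings and carries $[-1,1]^d$ into $[0,1]^d$ (because $\bt\in A_{\bh}$, $\bh\in I$), the function $g$ is coordinatewise nondecreasing when $f\in\mathcal F_1$ and convex when $f\in\mathcal F_2$ on a set containing the support of the relevant kernel. Hence it suffices to establish the two displayed inequalities for \emph{every} coordinatewise nondecreasing $g$ (case $j=1$) and \emph{every} convex $g$ (case $j=2$). The positivity of the normalizing constants is clear for $\psi_1^u,\psi_1^\ell,\psi_2^u$ (nonnegative, bounded, compactly supported), and for $\psi_2^\ell$ it follows from the polar computation $\langle 1,\psi_2^\ell\rangle=\omega_{d-1}\int_0^1\bigl(1-\tfrac{2d+4}{d+1}r+\tfrac{d+3}{d+1}r^2\bigr)r^{d-1}\,dr=\tfrac{2\,\omega_{d-1}}{d(d+1)^2(d+2)}>0$, where $\omega_{d-1}$ is the surface area of the unit sphere in $\R^d$.

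For $\mathcal F_1$ the two inequalities are immediate: $\psi_1^u\ge 0$ is supported in $\{\bx\ge\bm 0\}$, where $g(\bx)\ge g(\bm 0)$, and $\psi_1^\ell\ge 0$ is supported in $\{\bx\le\bm 0\}$, where $g(\bx)\le g(\bm 0)$. For the upper bound in $\mathcal F_2$, the kernel $\psi_2^u\ge 0$ is even, so the substitution $\bx\mapsto-\bx$ followed by averaging gives
\begin{equation*}
\int\psi_2^u(\bx)\,\bigl[g(\bx)-g(\bm 0)\bigr]\,d\bx=\tfrac12\int\psi_2^u(\bx)\,\bigl[g(\bx)+g(-\bx)-2g(\bm 0)\bigr]\,d\bx\ \ge\ 0
\end{equation*}
by midpoint convexity.

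The real work is the lower bound for $\mathcal F_2$, i.e.\ $\int_B\psi_2^\ell(\bx)\,[g(\bx)-g(\bm 0)]\,d\bx\le 0$ for every convex $g$, with $B$ the closed unit ball. Because $\psi_2^\ell$ and $B$ are rotation invariant, replacing $g$ by its rotational average $\bx\mapsto\int_{\mathrm{SO}(d)}g(R\bx)\,dR$ (normalized Haar measure) leaves the integral unchanged and preserves convexity, so I may assume $g(\bx)=G(\|\bx\|)$ with $G:[0,1]\to\R$ convex and nondecreasing; in polar coordinates the claim becomes $\int_0^1 w(r)\,[G(r)-G(0)]\,dr\le 0$, where $w(r):=r^{d-1}\bigl(1-\tfrac{2d+4}{d+1}r+\tfrac{d+3}{d+1}r^2\bigr)$. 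Writing $G(r)-G(0)=\int_0^r G'(s)\,ds$ and interchanging the order of integration rewrites this as $\int_0^1 G'(s)\,W(s)\,ds$ with $W(s):=\int_s^1 w(r)\,dr$. Two facts then close the argument. First,
\begin{equation*}
\int_0^1 W(s)\,ds=\int_0^1 r\,w(r)\,dr=\int_0^1\bigl(1-\tfrac{2d+4}{d+1}r+\tfrac{d+3}{d+1}r^2\bigr)r^d\,dr=0 ,
\end{equation*}
which is exactly the moment condition the coefficients of $\psi_2^\ell$ are chosen to satisfy. Second, factoring the radial profile as $\tfrac{d+3}{d+1}(r-1)\bigl(r-\tfrac{d+1}{d+3}\bigr)$ shows $w>0$ on $\bigl(0,\tfrac{d+1}{d+3}\bigr)$ and $w<0$ on $\bigl(\tfrac{d+1}{d+3},1\bigr)$, so $W$ decreases and then increases; together with $W(1)=0$ and $W(0)=\int_0^1 w(r)\,dr>0$ this forces $W\ge 0$ on $[0,s_0]$ and $W\le 0$ on $[s_0,1]$ for a single $s_0\in(0,1)$. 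Since $G$ is convex, $G'$ is nondecreasing, so subtracting the constant $G'(s_0)$ (say, the right derivative there) and invoking the first fact,
\begin{equation*}
\int_0^1 G'(s)\,W(s)\,ds=\int_0^1\bigl(G'(s)-G'(s_0)\bigr)W(s)\,ds\ \le\ 0 ,
\end{equation*}
as $G'(s)-G'(s_0)$ and $W(s)$ have opposite signs for every $s$. The genuinely delicate steps are this final rearrangement-type inequality (pairing a monotone function against a mean-zero kernel with a single sign change) and the rotational reduction to one dimension; the moment identity and the factorization are brief calculations but are precisely what dictates the form of $\psi_2^\ell$. A routine mollification reduces the $\mathcal F_2$ cases to smooth $f$ and justifies the manipulations with $G'$ above.
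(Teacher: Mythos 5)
Your proof is correct, and for the class $\mathcal{F}_2$ it takes a genuinely different route from the paper's. For $\mathcal{F}_1$ your argument (support of $\psi_1^u,\psi_1^\ell$ plus monotonicity) is exactly the paper's. For the upper bound in $\mathcal{F}_2$ you use evenness and midpoint convexity where the paper invokes Jensen's inequality against the normalized density proportional to $\psi_2^u$; these are essentially the same observation. The real divergence is in the lower bound $\langle g,\psi_2^\ell\rangle\le g(\bm 0)\langle 1,\psi_2^\ell\rangle$. The paper keeps the problem multi-dimensional: it uses the convexity inequality $g(\alpha\bm y)\le\alpha g(\bm y)$ to trade $g(\bm x)$ against the value of $g$ on the sphere of radius $(d+1)/(d+3)$, with the sign of $\psi_2^\ell$ dictating the direction of each substitution, and then shows the resulting integral vanishes via an orthant-by-orthant change of variables to generalized polar coordinates. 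You instead reduce to one dimension by rotational averaging (exploiting that $\psi_2^\ell$ and the ball are $\mathrm{SO}(d)$-invariant and that averaging preserves convexity and the integral), then run a Chebyshev / rearrangement argument: the antiderivative $W(s)=\int_s^1 w$ has total integral zero (the moment identity $\int_0^1 r\,w(r)\,dr=0$, which is exactly what fixes the coefficients in $\psi_2^\ell$), a single sign change at some $s_0$, and pairing it against the nondecreasing $G'$ after subtracting $G'(s_0)$ closes the argument. Your approach is more transparent about \emph{why} the specific quadratic profile works (it is the unique one making $\int r\,w=0$ with $\int w>0$ and one sign change), at the cost of an extra, though standard, reduction to radial convex functions; the paper's approach avoids any symmetrization but the change-of-variables step that makes the orthant integrals vanish is, as you noted, doing the same moment bookkeeping in disguise. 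Both are valid; they buy different things — yours generalizes cleanly to any radial kernel with $\int r\,w=0$ and one sign change, the paper's stays closer to the geometry of the scaling inequality. One small remark: your verification that the radial average $G$ is nondecreasing (implicit in "convex and nondecreasing") should be flagged — it follows from $g(\bm 0)\le\tfrac12(g(r\bm e_1)+g(-r\bm e_1))=G(r)$ plus convexity of $G$, and you do need it for $G'-G'(s_0)$ to have the stated signs only when paired with $W$, which in fact does not require $G'\ge 0$, only $G'$ nondecreasing — so this point is harmless.
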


	\section{Adaptivity of the confidence band}\label{sec:adapt}
	In this section we show that the width of our confidence band  $[\hat{\ell},\hat{u}]$ (see \eqref{lhat4} and \eqref{uhat4}) adapts to the (global and local) smoothness and the \textit{intrinsic dimension} of the true function $f$. Let us first define the rate of convergence for a confidence band as follows: We say that the confidence band {\it $\{(\ell(\bt),u(\bt)): \bt \in [0,1]^d\}$}, with coverage probability $1-\alpha$ (for $\alpha \in (0,1)$), has {\it rate of convergence} $\gamma_n$ on a set $A_n\subseteq [0,1]^d$ for a function class  $\mathcal{G}$  if  $$\inf_{{f \in \mathcal{G}}} \p_f\left(\sup_{\bt\in {A_n}} (u(\bt)-\ell(\bt)) \leq {\Delta \gamma_n}\right) \geq 1-\alpha, \quad\text{for all}~n,$$  %where $\Delta$ is a constant depend  \newline
	where  ${\Delta}>0$ is a constant not depending on $n$ (but may depend on $\alpha$ and $\mathcal{G}$). Clearly we want the rate of convergence to be as small as possible.
	
	\subsection{Adaptivity with respect to global smoothness}\label{globalhold}
	
	To state our main result in this subsection we need to introduce the notion of H\"{o}lder smoothness of a function $f:[0,1]^d \to \R$, which we define below. 
	\begin{defi}[H\"{o}lder smoothness]\label{def:Holder}
		For every fixed $\beta>0$ and $L>0$, the H\"older class $\mathbb{H}_{\beta,L}$ on $[0,1]^d$ is defined as the set of all functions $f: [0,1]^d \to \R$ that have all partial derivatives of order $\lfloor \beta\rfloor$ (defined as the largest integer strictly less than $\beta$) on $[0,1]^d$, and satisfy:
		$$\sum_{\bm k \in \mathbb{N}^d: \|\bm k\|_1\leq \lfloor \beta\rfloor} \sup_{\bx \in [0,1]^d} \left|\frac{\partial^{\|\bm k\|_1} f(\bm x)}{\partial x_1^{k_1}\ldots \partial x_d^{k_d}}\right| \leq L$$
		and
		
		\begin{equation}\label{seccondhld}
			\sum_{\bm k \in \mathbb{N}^d: \|\bm k\|_1=\lfloor\beta\rfloor}\left|\frac{\partial^{\|\bm k\|_1} f(\bm y)}{\partial x_1^{k_1}\ldots \partial x_d^{k_d}}- \frac{\partial^{\|\bm k\|_1} f(\bm z)}{\partial x_1^{k_1}\ldots \partial x_d^{k_d}}\right| \leq L \|\bm y - \bm z\|^{\beta-\lfloor \beta\rfloor}\quad \text{for all}~\bm y,\bm z \in [0,1]^d~.  
		\end{equation}
		
	\end{defi}

	%when $f \in \F_1 \cap \mathbb{H}_{\beta,L}$ with $0 <\beta \le 1$ and $L>0$ (or $f \in \F_2 \cap \mathbb{H}_{\beta,L}$ with $1 <\beta \le 2$ and $L>0$). %At first we will prove that if the function $f$ belongs to the H\"{o}lder smoothness class $\mathbb{H}_{\beta,L}$ along with $\F_1$ or $\F_2$ then the rate of convergence is $(\log(n)/n)^{\beta/(2\beta+d)}$. The result is given below.
	The following theorem shows that the rate of convergence of our confidence band $[\hat{\ell},\hat{u}]$ for the class $\mathbb{H}_{\beta,L}\cap \mathcal{F}_j$ (for $j=1,2$) is $(\log n/n)^{\beta/(2\beta+d)}$. Note that the construction of the confidence band does not use knowledge of the smoothness parameter $\beta$ and it still achieves the minimax rate of convergence for the class $\mathbb{H}_{\beta,L}$ (see~\cite{Tsybakov-2009}) if $f \in \mathbb{H}_{\beta,L}$. This highlights the adaptive nature of our confidence band.
	\begin{theorem}\label{Thm 2}
		Fix $j\in \{1,2\}$, $f \in \mathcal{F}_j \cap \mathbb{H}_{\beta,L}$ with $j-1 <\beta \leq j$ and $L>0$. Suppose that  $[\hat{\ell},\hat{u}]$ is the level $\alpha$ confidence band constructed as in~\eqref{lhat4} and~\eqref{uhat4} based on kernel functions $\psi^\ell,\psi^u$ satisfying the two-sided bias condition~\eqref{mainkercondition}. Then there exists a constant $\Delta>0$ (depending only on $L,\beta,\psi^\ell,\psi^u$) such that $$\sup_{\bt \in A_{\varepsilon_n \boldsymbol{1}_d}}\left(\hat{u}(\bt)-\hat{\ell}(\bt)\right) \leq \Delta\, \varepsilon_n^\beta\left(1+\frac{|\kappa_\alpha|+ |T(-\psi^\ell)|+|T(\psi^u)|}{(\log(en))^{1/2}}\right)$$ where $\varepsilon_n :=(\log(en)/n)^{1/(2\beta+d)}$, and $\boldsymbol{1}_d$ is the $d$-dimensional vector of all ones. This, in particular, implies that 
		$$\inf_{f\in\mathcal{F}_j \cap \mathbb{H}_{\beta,L}}\p\left(\sup_{\bt \in A_{\varepsilon_n \boldsymbol{1}_d}}(\hat{u}(\bt)-\hat{\ell}(\bt)) \leq \Delta \Big[\frac{\log(en)}{n}\Big]^{\frac{\beta}{2\beta+d}}\right) \geq 1-\alpha, \quad \mbox{for all } n, $$
		for some constant $\Delta>0$ depending only on $L,\beta,\psi^\ell,\psi^u,\alpha$. %Here we would like to point out that $\psi^\ell,\psi^u$ depend on the choice of the function class $\mathcal{F}_1,\mathcal{F}_2$.
	\end{theorem}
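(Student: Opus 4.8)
The plan is to establish the displayed (pathwise) width bound first and then read off the probabilistic statement. Assume throughout that $n$ is large enough that $\varepsilon_n := (\log(en)/n)^{1/(2\beta+d)} \le 1/2$, so that $\varepsilon_n\boldsymbol{1}_d \in I$ (the finitely many smaller $n$ can be absorbed into the final constant $\Delta$), and fix an arbitrary $\bt \in A_{\varepsilon_n\boldsymbol{1}_d}$. Since $\varepsilon_n\boldsymbol{1}_d$ is then an admissible bandwidth for $\bt$, plugging $\bm h = \varepsilon_n\boldsymbol{1}_d$ into the infimum in~\eqref{uhat4} and the supremum in~\eqref{lhat4} gives
\[
\hat u(\bt) - \hat\ell(\bt) \;\le\; \big(\hat f^u_{\varepsilon_n\boldsymbol{1}_d}(\bt) - \hat f^\ell_{\varepsilon_n\boldsymbol{1}_d}(\bt)\big) + \Big(\tfrac{\|\psi^u\|}{\langle 1,\psi^u\rangle} + \tfrac{\|\psi^\ell\|}{\langle 1,\psi^\ell\rangle}\Big)\frac{\kappa_\alpha + \Gamma(2^d\varepsilon_n^d)}{(n\varepsilon_n^d)^{1/2}}.
\]
Then I would split $\hat f^u_{\varepsilon_n\boldsymbol 1_d}(\bt) - \hat f^\ell_{\varepsilon_n\boldsymbol 1_d}(\bt)$ into the two stochastic fluctuations $\hat f^u_{\bm h}(\bt) - \mathbb{E}\hat f^u_{\bm h}(\bt)$, $\mathbb{E}\hat f^\ell_{\bm h}(\bt) - \hat f^\ell_{\bm h}(\bt)$ and the two biases $\mathbb{E}\hat f^u_{\bm h}(\bt) - f(\bt)$, $f(\bt) - \mathbb{E}\hat f^\ell_{\bm h}(\bt)$ (both nonnegative by~\eqref{mainkercondition}), with $\bm h = \varepsilon_n\boldsymbol 1_d$.

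For the stochastic pieces, the definition of the multiscale statistic in~\eqref{equT} gives, for this particular $\bm h$ and $\bt$, $\hat f^u_{\bm h}(\bt) - \mathbb{E}\hat f^u_{\bm h}(\bt) \le \var^{1/2}(\hat f^u_{\bm h}(\bt))\,(T(\psi^u) + \Gamma(2^d\varepsilon_n^d))$ and likewise $\mathbb{E}\hat f^\ell_{\bm h}(\bt) - \hat f^\ell_{\bm h}(\bt) \le \var^{1/2}(\hat f^\ell_{\bm h}(\bt))\,(T(-\psi^\ell) + \Gamma(2^d\varepsilon_n^d))$, where the variance formula recorded in Section~\ref{sec:Conf-Band} reads $\var^{1/2}(\hat f^u_{\bm h}(\bt)) = \|\psi^u\|/\big((n\varepsilon_n^d)^{1/2}\langle 1,\psi^u\rangle\big)$. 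For the biases I would invoke the H\"older hypothesis on $f$: writing $\mathbb{E}\hat f^u_{\bm h}(\bt) - f(\bt) = \langle f(\bt + \varepsilon_n\,\cdot) - f(\bt),\psi^u\rangle/\langle 1,\psi^u\rangle$ and noting $\bt + \varepsilon_n\bx \in [0,1]^d$ for $\bx$ in the support $[-1,1]^d$ of the kernel, in the isotonic case $j=1$ ($0<\beta\le 1$) the H\"older inequality directly yields $|f(\bt+\varepsilon_n\bx)-f(\bt)| \le L(\sqrt d\,\varepsilon_n)^\beta$, so the bias is at most a constant times $L\varepsilon_n^\beta$; in the convex case $j=2$ ($1<\beta\le 2$) I would first note that~\eqref{mainkercondition}, applied to the affine functions $\pm(c + \bm a\cdot\bx)\in\mathcal{F}_2$ with $\bm a$ ranging over $\pm\bm e_1,\ldots,\pm\bm e_d$, forces $\int\bx\,\psi^u(\bx)\,d\bx = \bm 0$ and, symmetrically, $\int\bx\,\psi^\ell(\bx)\,d\bx = \bm 0$; hence in the first-order Taylor expansion $f(\bt+\varepsilon_n\bx) = f(\bt) + \varepsilon_n\nabla f(\bt)\cdot\bx + R(\bx)$ the linear term integrates to zero against $\psi^u$, and H\"older continuity of $\nabla f$ gives $|R(\bx)| \le \tfrac{L}{\beta}(\sqrt d\,\varepsilon_n)^\beta$, so again the bias is at most a constant times $L\varepsilon_n^\beta$. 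Collecting the four pieces,
\[
\hat u(\bt) - \hat\ell(\bt) \;\le\; C_1 L\varepsilon_n^\beta + \frac{C_2}{(n\varepsilon_n^d)^{1/2}}\big(|\kappa_\alpha| + |T(\psi^u)| + |T(-\psi^\ell)| + \Gamma(2^d\varepsilon_n^d)\big),
\]
with $C_1,C_2$ depending only on $\psi^\ell,\psi^u,\beta,d$.

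The remaining step is algebraic and uses the specific choice of $\varepsilon_n$. From $\varepsilon_n^{2\beta+d} = \log(en)/n$ one gets $n\varepsilon_n^d = \log(en)\,\varepsilon_n^{-2\beta}$, hence $(n\varepsilon_n^d)^{-1/2} = \varepsilon_n^\beta(\log(en))^{-1/2}$; moreover $\Gamma(2^d\varepsilon_n^d)^2 = 2\log\big(e/(2^d\varepsilon_n^d)\big) \le C\log(en)$, so $\Gamma(2^d\varepsilon_n^d) \le C'(\log(en))^{1/2}$. Substituting and absorbing the $\Gamma$-term into the $\varepsilon_n^\beta$-term yields $\hat u(\bt) - \hat\ell(\bt) \le \Delta\,\varepsilon_n^\beta\big(1 + (|\kappa_\alpha| + |T(-\psi^\ell)| + |T(\psi^u)|)/(\log(en))^{1/2}\big)$; taking the supremum over $\bt\in A_{\varepsilon_n\boldsymbol{1}_d}$ is the first assertion (recall $\varepsilon_n^\beta = (\log(en)/n)^{\beta/(2\beta+d)}$). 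For the second assertion, note $T(\psi^u)$ and $T(-\psi^\ell)$ are functionals of the Brownian sheet alone, hence have laws not depending on $n$ or $f$, and are finite almost surely by~\citet[Theorem 2.1]{multiscale} (applied to the kernels $\psi^u$ and $-\psi^\ell$, which are of bounded Hardy--Krause variation). Since $(\log(en))^{1/2}\ge 1$, the random factor above is at most $1+|\kappa_\alpha| + |T(\psi^u)| + |T(-\psi^\ell)|$; choosing $M$ with $\p(|T(\psi^u)| + |T(-\psi^\ell)| > M) \le \alpha$ and enlarging $\Delta$ so that $\Delta$ dominates $\Delta_0(1+|\kappa_\alpha|+M)$ (where $\Delta_0$ is the constant from the first assertion) gives $\p_f\big(\sup_{\bt\in A_{\varepsilon_n\boldsymbol{1}_d}}(\hat u(\bt)-\hat\ell(\bt)) \le \Delta(\log(en)/n)^{\beta/(2\beta+d)}\big) \ge 1-\alpha$ uniformly over $n$ and over $f\in\mathcal{F}_j\cap\mathbb{H}_{\beta,L}$, since the event $\{|T(\psi^u)| + |T(-\psi^\ell)| \le M\}$ does not depend on $f$ and none of the constants does either.

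I expect the bias estimate to be the main obstacle. The delicate part is the convex case: one must realize that the two-sided bias condition~\eqref{mainkercondition} \emph{itself} forces the first moments of $\psi^\ell,\psi^u$ to vanish, which is what makes the $\varepsilon_n^\beta$ bias rate available for every admissible kernel pair and not just the explicit choice in~\eqref{psi2defnt}; one then has to carry out the Taylor-with-H\"older-remainder bound uniformly over $\bt\in A_{\varepsilon_n\boldsymbol{1}_d}$ and over $f\in\mathbb{H}_{\beta,L}$, keeping $\bt+\varepsilon_n\bx$ inside $[0,1]^d$ so that the H\"older inequality on $[0,1]^d$ applies and ensuring the constants depend only on $L,\beta,d,\psi^\ell,\psi^u$. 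The stochastic bound and the final algebra are routine given the framework already set up in Section~\ref{sec:Conf-Band}.
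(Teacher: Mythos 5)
Your proof is correct and takes essentially the same route as the paper's: the same bias--plus--stochastic--fluctuation decomposition at the fixed bandwidth $\bm h=\varepsilon_n\boldsymbol{1}_d$, the same use of the first-moment vanishing of the kernels (forced by~\eqref{mainkercondition} applied to the affine members of $\mathcal{F}_2$, which is precisely the parenthetical observation made in the paper after~\eqref{sh2}) to kill the linear Taylor term in the convex case, and the same algebra balancing $\varepsilon_n^{\beta}$ against $\Gamma(2^d\varepsilon_n^d)/(n\varepsilon_n^d)^{1/2}$. The only differences are cosmetic: you bound $\hat u-\hat\ell$ in one shot rather than bounding $\hat u-f$ and $f-\hat\ell$ separately and adding, and you phrase the second-order bias via the integral remainder (with $|\psi^u|$, which is slightly tidier when $\psi^u$ may change sign) rather than via the mean-value form used in the paper.
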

	
	Theorem \ref{Thm 2} is proved in Section \ref{thm2prf}.
	Its proof starts by bounding the pointwise deviation of the upper (and lower) band of our constructed confidence set from the true function $f$, in terms of the inner product of the variation of $f$ in a small neighborhood. The variation of $f$ over this neighborhood can then be bounded in terms of appropriate powers of the smoothing bandwidth, using the H\"{o}lder smoothness of $f$. 
	
	\subsection{Adaptivity with respect to local smoothness}
	Our main result in Section \ref{globalhold} showed that our confidence band achieves the optimal rate of convergence when the true function is globally H\"older smooth. In this subsection we show that our adaptivity results also hold when the true function is locally H\"older smooth. Specifically, we will look at the behavior of $\hat{u}(\bm t_0) - \hat{\ell}(\bm t_0)$ for a fixed $\bm t_0\in (0,1)^d$.    
	
	\begin{theorem}\label{locad}
		Fix $j \in \{1,2\}$ and ${\bm t}_0 \in (0,1)^d$. Suppose that $f \in \mathcal{F}_j$, and that there exists $\varepsilon \in (0,1/2) $ such that $f$ is H\"{o}lder smooth on ${B}_{\infty}(\bm t_0,\varepsilon)$ with smoothness parameter $\beta \in (j-1,j]$ and $L>0$. We construct the confidence band $[\hat{\ell},\hat{u}]$ based on kernel functions $\psi^\ell,\psi^u$ that satisfy the two-sided bias condition~\eqref{mainkercondition}. Then there exists a constant $K$ depending on $\beta, L,\psi^\ell,\psi^u$ such that:
		\begin{equation*}\label{locad1}
			\hat{u}(\bt_0) - \hat{\ell}(\bt_0) \le K\rho_n \left(1+\frac{|\kappa_\alpha| + |T(-\psi^\ell)| + |T(\psi^u)|}{(\log(en))^{1/2}}\right)
		\end{equation*}
		where $\rho_n = (\log(en)/n)^{\beta/(2\beta+d)}$. Note that this implies that
		$$\p_f\left(\hat{u}(\bt_0)-\hat{\ell}(\bt_0) \leq \Delta \Big[\frac{\log(en)}{n}\Big]^{\frac{\beta}{2\beta+d}}\right) \geq 1-\alpha, \quad \mbox{for all } n, $$
		for some constant $\Delta>0$ depending only on $L,\beta,\psi^l,\psi^u,\alpha$.
	\end{theorem}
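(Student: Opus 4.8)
The plan is to mirror the proof of Theorem~\ref{Thm 2}, but localized at the single point ${\bm t}_0$ and using only the H\"older smoothness of $f$ on $B_\infty({\bm t}_0,\varepsilon)$. Set $h_n:=(\log(en)/n)^{1/(2\beta+d)}$ and $\bm h_0:=h_n\boldsymbol{1}_d$. Since ${\bm t}_0\in(0,1)^d$ and $h_n\to 0$, there is an $n_0$ (depending on ${\bm t}_0$ and $\varepsilon$) such that $\bm h_0\in I$, ${\bm t}_0\in A_{\bm h_0}$ and $h_n\le\varepsilon$ for every $n\ge n_0$; for the finitely many $n<n_0$ one uses instead the fixed feasible bandwidth $c_0\boldsymbol{1}_d$ with $c_0:=\tfrac12\big(\varepsilon\wedge\min_i t_{0,i}\wedge\min_i(1-t_{0,i})\big)$, which only inflates the final constant. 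Because $\hat u({\bm t}_0)$ is an infimum and $\hat\ell({\bm t}_0)$ a supremum over feasible bandwidths in \eqref{lhat4}--\eqref{uhat4}, evaluating both at $\bm h_0$ gives, writing $\Gamma_n:=\Gamma(2^d h_n^d)$,
$$\hat u({\bm t}_0)-\hat\ell({\bm t}_0)\le\big(\hat f^u_{\bm h_0}({\bm t}_0)-\hat f^\ell_{\bm h_0}({\bm t}_0)\big)+(\kappa_\alpha+\Gamma_n)\Big(\tfrac{\|\psi^u\|}{\langle 1,\psi^u\rangle}+\tfrac{\|\psi^\ell\|}{\langle 1,\psi^\ell\rangle}\Big)(nh_n^d)^{-1/2},$$
so everything reduces to bounding $\hat f^u_{\bm h_0}({\bm t}_0)-\hat f^\ell_{\bm h_0}({\bm t}_0)$.

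Next I would decompose this difference into two stochastic pieces and one bias piece,
$$\hat f^u_{\bm h_0}({\bm t}_0)-\hat f^\ell_{\bm h_0}({\bm t}_0)=\big(\hat f^u_{\bm h_0}({\bm t}_0)-\e\hat f^u_{\bm h_0}({\bm t}_0)\big)+\big(\e\hat f^u_{\bm h_0}({\bm t}_0)-\e\hat f^\ell_{\bm h_0}({\bm t}_0)\big)+\big(\e\hat f^\ell_{\bm h_0}({\bm t}_0)-\hat f^\ell_{\bm h_0}({\bm t}_0)\big).$$
For the stochastic pieces, the definition \eqref{equT} of $T(\psi^u)$ and $T(-\psi^\ell)$ evaluated at the single pair $(\bm h_0,{\bm t}_0)$, together with $\var^{1/2}(\hat f^u_{\bm h_0}({\bm t}_0))=\|\psi^u\|(nh_n^d)^{-1/2}/\langle1,\psi^u\rangle$ and the analogous identity for $\psi^\ell$, gives that both are $O\!\big((|T(\psi^u)|+|T(-\psi^\ell)|+\Gamma_n)(nh_n^d)^{-1/2}\big)$. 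For the bias piece, \eqref{mainkercondition} yields $\e\hat f^\ell_{\bm h_0}({\bm t}_0)\le f({\bm t}_0)\le\e\hat f^u_{\bm h_0}({\bm t}_0)$, so it equals $\big(\e\hat f^u_{\bm h_0}({\bm t}_0)-f({\bm t}_0)\big)+\big(f({\bm t}_0)-\e\hat f^\ell_{\bm h_0}({\bm t}_0)\big)$ with both summands nonnegative; I would bound each by $C(L,\beta,\psi^u,\psi^\ell)\,h_n^\beta$ by writing $\e\hat f^u_{\bm h_0}({\bm t}_0)=\langle f({\bm t}_0+\bm h_0\star\cdot),\psi^u\rangle/\langle1,\psi^u\rangle$, Taylor-expanding $f$ around ${\bm t}_0$ (legitimate since $B_\infty({\bm t}_0,\bm h_0)\subseteq B_\infty({\bm t}_0,\varepsilon)$), and using that the $\langle1,\psi\rangle$-normalization reproduces constants while for $j=2$ the first moments $\langle x_i,\psi^u\rangle$, $\langle x_i,\psi^\ell\rangle$ all vanish --- the latter being forced by \eqref{mainkercondition} itself because every affine function lies in $\mathcal F_2$. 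Hence all Taylor terms of order $\le\lfloor\beta\rfloor$ integrate to $0$ against the kernels, and the H\"older modulus of the top-order derivatives leaves a remainder of order $h_n^\beta$.

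Assembling the pieces yields $\hat u({\bm t}_0)-\hat\ell({\bm t}_0)\le C_1 h_n^\beta+C_2\big(|\kappa_\alpha|+\Gamma_n+|T(\psi^u)|+|T(-\psi^\ell)|\big)(nh_n^d)^{-1/2}$ with $C_1,C_2$ depending only on $L,\beta,\psi^u,\psi^\ell$ (and, through $n_0$, on ${\bm t}_0,\varepsilon$). Since $h_n^{2\beta+d}=\log(en)/n$, one checks $h_n^\beta=\rho_n$, $\Gamma_n\asymp(\log(en))^{1/2}$ and $(nh_n^d)^{-1/2}\asymp\rho_n/(\log(en))^{1/2}$, so $\Gamma_n(nh_n^d)^{-1/2}\asymp\rho_n$ while the terms carrying $|\kappa_\alpha|$, $|T(\psi^u)|$, $|T(-\psi^\ell)|$ each contribute a multiple of $\rho_n/(\log(en))^{1/2}$ times that quantity; this is exactly the claimed bound $\hat u({\bm t}_0)-\hat\ell({\bm t}_0)\le K\rho_n\big(1+(|\kappa_\alpha|+|T(-\psi^\ell)|+|T(\psi^u)|)/(\log(en))^{1/2}\big)$. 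The probabilistic consequence then follows because $T(\psi^u)$ and $T(-\psi^\ell)$ are a.s.\ finite by \citet[Theorem~2.1]{multiscale}: pick $M$ with $\p(|T(\psi^u)|+|T(-\psi^\ell)|\le M)\ge1-\alpha$ and note that on that event, using $\log(en)\ge1$, the width is at most $K(1+|\kappa_\alpha|+M)\rho_n=:\Delta\rho_n$, for every $n$.

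The step I expect to be the main obstacle is the bias bound: arguing that an arbitrary pair of kernels satisfying \eqref{mainkercondition} automatically reproduces polynomials up to degree $\lfloor\beta\rfloor\le j-1$, so that the bias at ${\bm t}_0$ is genuinely $O(h_n^\beta)$ rather than merely $O(h_n)$ --- without this the local rate would degrade. The remaining ingredients (stochastic control through the multiscale statistic, the balancing of $h_n$, and the bookkeeping for small $n$ and for constants depending on ${\bm t}_0,\varepsilon$) parallel the proof of Theorem~\ref{Thm 2} and are routine.
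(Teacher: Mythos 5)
Your proof is correct and follows essentially the same route as the paper: the paper's own treatment of Theorem~\ref{locad} is a two–line sketch that says "run the proof of Theorem~\ref{Thm 2} with $\bm h=\varepsilon_n\boldsymbol{1}_d$ at the single point $\bm t_0$, observing that kernel averaging sees $f$ only on $B_\infty(\bm t_0,\varepsilon_n)\subset B_\infty(\bm t_0,\varepsilon)$," and you have written that argument out in full. Your algebra is organized a bit differently — you decompose $\hat u(\bm t_0)-\hat\ell(\bm t_0)$ directly through $\hat f^u_{\bm h_0}(\bm t_0)-\hat f^\ell_{\bm h_0}(\bm t_0)$ and then split into two centered (stochastic) pieces and one purely bias piece, while the paper bounds $\hat u(\bm t)-f(\bm t)$ and $f(\bm t)-\hat\ell(\bm t)$ separately and adds — but this is algebraically the same decomposition. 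You also correctly identify the one non-routine point, that for $j=2$ the first moments $\int x_i\,\psi^u$ and $\int x_i\,\psi^\ell$ vanish because \eqref{mainkercondition} applied to the convex (and concave) functions $\pm x_i$ forces it; this is exactly the argument the paper uses in equation~\eqref{sh2}. The only cosmetic caveat is that your constant $K$ implicitly depends on $\bm t_0$ and $\varepsilon$ through the threshold $n_0$ (which you flag), whereas the theorem statement lists only $\beta,L,\psi^\ell,\psi^u$; the paper's own sketch has the same implicit dependence, so this is a wording issue of the statement rather than a gap in your argument.
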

	
	%{\color{red} See, Section \ref{thm2prf} for a proof of Theorem \ref{locad}. 
	The proof of Theorem \ref{locad} essentially follows from that of Theorem~\ref{Thm 2} by noticing that one needs to control the bias of the kernel estimators locally which in turn only depends on the local smoothness of $f$; see Section~\ref{sec:Loc-Smooth} for a sketch of the proof.

	%\textcolor{red}{Should we write a remark for constant and linear functions, on how the adaptive lengths look like?}

	%\begin{prop}
	%Suppose that $\hat{f}$ is any estimator of $f$ in the model described in~\eqref{eq:Mdl}. Then, there exists a constant $c>0$ depending only on $\beta$ and $L$ such that: $$\liminf_{n \to \infty} \inf_{\hat{f}}\sup_{f \in \mathbb{H}_{\beta,L}}\left(\frac{n}{\log(en)}\right)^{\frac{2\beta}{2\beta+d}} \E_f\left(\|f-\hat{f}\|_\infty^2 \right)\geq c.$$
	%\end{prop}

	\subsection{Adaptivity with respect to intrinsic dimension}
	The intrinsic dimension of a function refers to the number of coordinates it actually depends on.
	\begin{defi}[Intrinsic dimensionality]\label{iddef}
		The intrinsic dimension $\dm(f)$ of a function $f: \R^d \to \R$ is $k\ge 1$ if and only if:
		\begin{enumerate}
			\item there exist $1\leq i_1<i_2<\ldots<i_k\leq d$ and a function $g: \R^k \to \R$ such that $f(x_1,\ldots,x_d) = g(x_{i_1},\ldots,x_{i_k})$ for all $(x_1,\ldots,x_d) \in \R^d$, and
			
			\item $f$ is not a function of $(x_s)_{s\in S}$ for any strict subset $S$ of $\{i_1,\ldots,i_k\}$.
		\end{enumerate}
	\end{defi}
	It can be verified easily from Definition \ref{iddef} that the intrinsic dimensionality of a function $f$ is unique. We will now show that our confidence band $[\hat{\ell},\hat{u}]$ adapts to the intrinsic dimensionality of the true function $f$.
	
	\begin{theorem}\label{idimth}
		Fix $j\in \{1,2\}$, $f \in \mathcal{F}_j \cap \mathbb{H}_{\beta,L}$ with $j-1 <\beta \leq j$ and $L>0$. Let $\dm(f) = k$ and suppose that $f(\bm x) = g(x_{i_1},\ldots,x_{i_k})$ for all $\bm x=(x_1,\ldots,x_d)\in [0,1]^d$ and some $1\le i_1<\ldots < i_k\le d$ and some function $g:[0,1]^k \to \R$. Suppose that $[\hat{\ell},\hat{u}]$ is the constructed confidence band based on kernel functions $\psi^\ell,\psi^u$ that satisfies the two-sided bias condition \eqref{mainkercondition}. Then, for every $\varepsilon \in (0,1/2)$, there exists a constant $\Delta>0$ (depending only on $L,\beta,\psi^\ell,\psi^u,\varepsilon$) such that:
		$$\sup_{\bm t \in A_{\boldsymbol{\varepsilon}_{n,i_1,\ldots,i_k}}}\left(\hat{u}(\bm t)-\hat{\ell}(\bm t)\right) \le \Delta \rho_{n,k} \left(1+\frac{|\kappa_{\alpha}| + |T(-\psi^\ell)| + |T(\psi^u)|}{(\log(en))^{1/2}}\right)$$
		where $\rho_{n,k} := (\log(en)/n)^{\beta/(2\beta+k)}$, $\varepsilon_n := \varepsilon \rho_{n,k}^{1/\beta}$ and $\boldsymbol{\varepsilon}_{n,i_1,\ldots,i_k}$ is the $d$-dimensional vector with the $i_1^{\mathrm{th}},\ldots,i_k^{\mathrm{th}}$ entries all equal to $\varepsilon_n$ and all other entries equal to $\varepsilon$.
	\end{theorem}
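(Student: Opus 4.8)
The plan is to reduce the $d$-dimensional problem to a $k$-dimensional one by exploiting the fact that $f(\bm x) = g(x_{i_1},\dots,x_{i_k})$ depends only on the coordinates indexed by $i_1<\dots<i_k$, and then invoke the global-smoothness bound of Theorem~\ref{Thm 2} for the $k$-variate function $g \in \mathcal{F}_j^{(k)} \cap \mathbb{H}_{\beta,L}$ (the analogous shape/Hölder class on $[0,1]^k$). First I would observe that, for the bandwidth vector $\bm h$ whose $i_1,\dots,i_k$ entries equal some common value $h$ and whose remaining $d-k$ entries equal the fixed constant $\varepsilon$, the kernel estimator $\hat{f}^{\ell}_{\bm h}(\bt)$ (resp.\ $\hat{f}^{u}_{\bm h}$) decomposes multiplicatively: the kernel $\psi$ is a product-type or radial function, and integrating against $f$ over the ``inactive'' coordinates only rescales by a constant (the integral of the corresponding one-dimensional marginals of $\psi$ over their support), because $f$ is constant in those directions. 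Hence, up to a fixed multiplicative constant depending on $\psi$ and $\varepsilon$, the expectation $\E(\hat{f}^{\ell}_{\bm h}(\bt))$ equals the $k$-dimensional kernel-averaged value of $g$ with bandwidth $h\boldsymbol{1}_k$ at the point $(t_{i_1},\dots,t_{i_k})$, and the variance is of order $(n h^k \varepsilon^{d-k})^{-1}$, i.e.\ the effective sample-size exponent is $k$, not $d$.

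Next I would plug this restricted family of bandwidths into the definitions \eqref{lhat4}–\eqref{uhat4} of $\hat\ell,\hat u$ and run the bias–variance tradeoff exactly as in the proof of Theorem~\ref{Thm 2}, but now over the $k$-dimensional scale $h$. Taking the infimum/supremum over $\bm h$ in \eqref{lhat4}–\eqref{uhat4} only restricts to this sub-family (the true band is at least as tight), so it suffices to control $\hat{f}^{u}_{\bm h}(\bt) - \hat{f}^{\ell}_{\bm h}(\bt)$ plus the penalty terms for a single well-chosen $h$. The bias over a cube of half-width $h$ in the active coordinates is $O(h^\beta)$ by the Hölder condition \eqref{seccondhld} on $g$, while the stochastic fluctuation and the multiscale penalty $\Gamma(2^d\prod h_i)$ together contribute a term of order $(nh^k\varepsilon^{d-k})^{-1/2}$ times $(\kappa_\alpha + |T(\pm\psi)| + \Gamma(\cdots))$; since $\varepsilon$ is fixed this is $\asymp (nh^k)^{-1/2}$ up to a constant, and the penalty $\Gamma$ is of order $(\log(1/h^k))^{1/2} \asymp (\log(en))^{1/2}$ when $h$ is polynomial in $n$. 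Balancing $h^\beta \asymp (nh^k)^{-1/2}(\log(en))^{1/2}$ gives $h \asymp (\log(en)/n)^{1/(2\beta+k)} = \rho_{n,k}^{1/\beta} = \varepsilon_n/\varepsilon$, so that the active bandwidths are exactly $\varepsilon_n = \varepsilon\rho_{n,k}^{1/\beta}$ and the band width at that scale is $O(\rho_{n,k})$ with the stated residual factor $\bigl(1 + (|\kappa_\alpha|+|T(-\psi^\ell)|+|T(\psi^u)|)/(\log(en))^{1/2}\bigr)$. This must hold uniformly over $\bm t \in A_{\boldsymbol{\varepsilon}_{n,i_1,\dots,i_k}}$, which is precisely the set of centers for which this bandwidth vector is admissible; the uniformity comes for free because the multiscale statistics $T(\pm\psi)$ already control fluctuations uniformly in $(\bm h,\bt)$.

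The main obstacle I anticipate is the multiplicative-decomposition step for the kernels that are actually used in practice—especially $\psi_2^\ell,\psi_2^u$ for the convex class, which are \emph{radial} rather than product functions, so that ``integrating out'' the inactive coordinates does not literally factor. For those one cannot use $\psi_{2}$ itself with a mixed bandwidth vector and still get the bias control from Theorem~\ref{thm2}; instead I would either (a) verify that the statement is intended only for kernels satisfying a product structure in the inactive directions (the generic two-sided-bias hypothesis of the theorem allows choosing such kernels), or (b) argue more carefully that a radial kernel in $d$ variables restricted to a $k$-dimensional affine slice still induces a valid $k$-dimensional kernel with the required one-sided bias against $g\in\mathcal{F}_j^{(k)}$, which is plausible since a section of a convex function is convex and a section of a coordinate-wise monotone function is coordinate-wise monotone, but the bias \emph{inequality} direction would need to be re-checked. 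A secondary technical point is handling the $\varepsilon$-dependence of all constants cleanly (the variance carries a $\varepsilon^{-(d-k)}$ factor and $A_{\boldsymbol{\varepsilon}_{n,i_1,\dots,i_k}}$ shrinks in the inactive coordinates), but since $\varepsilon$ is held fixed this only affects the constant $\Delta$, exactly as the statement permits.
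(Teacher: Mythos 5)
Your bias--variance balancing in the second paragraph is exactly right, and it matches the paper's scheme: fix the ``inactive'' bandwidths at $\varepsilon$, let the ``active'' bandwidths shrink to $\varepsilon_n$, bound the bias by $O(\varepsilon_n^\beta)$ via Hölder smoothness on the active directions, and pay a variance/penalty term of order $(n\varepsilon_n^k\varepsilon^{d-k})^{-1/2}(\log(en))^{1/2} \asymp \rho_{n,k}$. That is precisely how the paper argues, by modifying the bounds \eqref{sh1}/\eqref{sh2} so that only the active bandwidths appear in the bias, and then plugging $\bm h = \boldsymbol{\varepsilon}_{n,i_1,\dots,i_k}$ into \eqref{indilt}.

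However, the ``multiplicative decomposition'' claim in your first paragraph is neither true for the kernels actually in play nor needed, and the worry it generates in your third paragraph is a false obstacle. You claim $\E(\hat f^{\ell}_{\bm h}(\bt))$ ``equals the $k$-dimensional kernel-averaged value of $g$ \dots up to a multiplicative constant''; this identity requires a product kernel and fails for $\psi_2^{\ell},\psi_2^u$. You then propose either to restrict to product kernels, or to re-derive a $k$-dimensional one-sided bias inequality for the restricted/marginalized radial kernel — neither is what the paper does, and option (b) is genuinely delicate for $\psi_2^\ell$, which takes negative values. The point you are missing is that you never need to exhibit a $k$-dimensional kernel at all. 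The proof of Theorem~\ref{Thm 2} proceeds from the chain in \eqref{eq6}, which uses the \emph{$d$-dimensional} two-sided bias inequality \eqref{mainkercondition} (assumed as a hypothesis) to replace $\hat f^u_{\bm h}(\bt) - f(\bt)$ by $\langle f(\bt+\bm h\star\cdot)-f(\bt),\psi^u\rangle/\langle1,\psi^u\rangle$ plus a penalized fluctuation term; the bias integral is then estimated in \eqref{sh1}/\eqref{sh2} by raw Hölder inequalities that never invoke any factorization of $\psi^u$. For the intrinsic-dimension result, the only change is that since $f(\bt+\bm h\star\bm x)-f(\bt)=g(\,\cdot\,)-g(\,\cdot\,)$ varies only in the active coordinates, one can replace $\|\bm h\star\bm x\|^\beta$ (resp.\ $\|\bm h\star\xi_{\bm x}\|^{\beta-1}$) by its restriction to $i_1,\dots,i_k$, so that $\max_i h_i$ becomes $\max_j h_{i_j}=\varepsilon_n$; no regularity or bias property of a $k$-dimensional kernel is invoked anywhere. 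If you drop the factorization framing and simply re-run \eqref{sh1}/\eqref{sh2} with this replacement, the rest of your second paragraph already carries the argument through, uniformly over $\bm t\in A_{\boldsymbol{\varepsilon}_{n,i_1,\dots,i_k}}$, exactly as stated.
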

	
	Theorem \ref{idimth}, proved in Section \ref{pridim}, states that the rate of convergence of our confidence band when the function $f$ lies in $\mathcal{F}_j \cap \mathbb{H}_{\beta,L}$ (for $j = 1,2$) with $j-1 <\beta \leq j$ and $L>0$, is $\left(\log(en)/n\right)^{\beta/(2\beta+\dm(f))}$. Thus, the rate of convergence of the confidence band depends only on the variables which actually affect the true function, and not on the redundant variables that the function does not vary with. This is a desirable property, as this shows that our proposed confidence band can avoid the curse of dimensionality when $d$ is large, if dim($f$) is small. Note that this is another example of automatic adaptation of our procedure: the proposed confidence band has no knowledge of dim($f$) and it still automatically adjusts to yield a band that shrinks at the correct rate.

	\section{Optimality of our confidence band}\label{minimax}
	In this section we prove that our proposed confidence band (see \eqref{lhat4} and~\eqref{uhat4}) is optimal in a certain sense. Our results extend Theorem 4.2 in \cite{Dumbgen-2003}. In order to state our result, we need some notation. For a function $g: \mathbb{R}^d\to \mathbb{R}$ and $U\subseteq \mathbb{R}^d$, define:
	$$\|g\|_U := \sup_{\bm x\in U} |g(\bm x)|.$$ We first state our optimality result for the class of coordinate-wise nondecreasing functions $\mathcal{F}_1$.
	
	\begin{theorem}\label{opconstant}
		Let $f \in \mathcal{F}_1$ be a continuously differentiable function in an open neighborhood $U$ of $\bm t_0 \in (0,1)^d$ such that 
		\begin{equation*}
			L_1 \equiv L_1[\bm t_0] \equiv L_1[f,\bm t_0]:= \left[\prod_{i=1}^d \frac{\partial}{\partial x_i} f(x) \big|_{x=\bm t_0}\right]^{1/d} > 0.
		\end{equation*}
		Define
		\begin{equation}\label{eq:Delta_z}
			\rho_n :=\left(\frac{\log(en)}{n} \right)^{\frac{1}{2+d}} \quad \text{and } \quad \Delta^{(z)}:=\left(\left(\frac{d+2}{2d}\right) \|{\psi_1^{z}}\|^2\right)^{-\frac{1}{2+d}}
		\end{equation}
		where $z$ stands for $u$ and $\ell$ corresponding to kernels $\psi_1^u$ and $\psi_1^\ell$ (respectively) as defined in \eqref{psi1defnt}. Then we have the following:
		\begin{itemize}
			\item [(a)]
			Let $[\ell,u]$  be any confidence band such that, for some $\alpha \in (0,1)$, $$\p_g\big(\ell(\bm t) \leq g(\bm t) \leq u(\bm t) \mbox{ for all } \bm t \in [0,1]^d\big) \geq 1-\alpha  \quad \mbox{ for all } g \in \mathcal{F}_1.$$ Then, for any $\epsilon>0$,
			\begin{equation*}
				\begin{split}
					\liminf_{n \to \infty }\p_{f}\left(\|f- \ell\|_{U} \geq (1-\epsilon) \Delta^{(\ell)} L_1^{\frac{d}{2+d}}[f,\bm t_0] \rho_n\right) \geq 1-\alpha, \\
					\liminf_{n \to \infty} \p_{f}\left(\|u-f\|_{U} \geq (1-\epsilon) \Delta^{(u)} L_1^{\frac{d}{2+d}}[f,\bm t_0] \rho_n\right) \geq 1-\alpha.
				\end{split}
			\end{equation*}

			\item[(b)] Moreover, let $[\hat{\ell},\hat{u}]$ be the confidence band with coverage probability $1-\alpha$ as defined in \eqref{lhat4} and \eqref{uhat4}, with kernels as in \eqref{psi1defnt}. Then, for any $\epsilon>0$, we have 
			\begin{equation}\label{monotoneupp}
				\begin{split}
					\lim_{n \to \infty} \p_{f}\left((f-\hat{\ell})(\bm t_0) \leq (1+\epsilon)\Delta^{(\ell)}L_1^{\frac{d}{2+d}}[f,\bm t_0] \rho_n\right) = 1,\\
					\lim_{n \to \infty} \p_{f}\left((\hat{u}-f)(\bm t_0) \leq (1+\epsilon)\Delta^{(u)} L_1^{\frac{d}{2+d}}[f,\bm t_0] \rho_n\right) = 1.
				\end{split}
			\end{equation}
		\end{itemize}
	\end{theorem}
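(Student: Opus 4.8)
\emph{Proof sketch.} Write $a_i := \partial f/\partial x_i$ evaluated at $\bm t_0$; since $f\in\mathcal F_1$ and $L_1>0$, all $a_i>0$ and $L_1=(\prod_{i=1}^d a_i)^{1/d}$. Part~(b) is an achievability bound for our band, while part~(a) is an impossibility bound valid for \emph{every} band; they are proved separately, and the proof follows, in $d$ dimensions, the scheme of Theorem~4.2 of \cite{Dumbgen-2003}. Throughout $\rho_n$ and $\Delta^{(z)}$ are as in \eqref{eq:Delta_z}, and we repeatedly use that $\|\psi_1^\ell\|^2=\|\psi_1^u\|^2=2/(d+2)!$ and $\langle 1,\psi_1^\ell\rangle=1/(d+1)!$, so that $\Delta^{(\ell)}=\Delta^{(u)}=(d(d+1)!)^{1/(2+d)}$.

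\smallskip
\noindent\textbf{Part (b).} It suffices to treat $\hat\ell$; the bound for $\hat u$ is identical with $\psi_1^u$ in place of $\psi_1^\ell$. In \eqref{lhat4} the supremum over $\bm h$ can only raise $\hat\ell(\bm t_0)$, so it is enough to insert one good bandwidth. I would take $\bm h_n$ with $h_{n,i}=v^{1/d}\rho_n L_1/a_i$ for a constant $v>0$ chosen later; then $\prod_i h_{n,i}=v\rho_n^d$, $\bm t_0\in A_{\bm h_n}$ eventually, and the estimator's standard deviation is $s_n:=\|\psi_1^\ell\|\langle1,\psi_1^\ell\rangle^{-1}(n\prod_i h_{n,i})^{-1/2}\asymp\rho_n/\sqrt{\log(en)}$. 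Decomposing $f(\bm t_0)-\hat f^{\ell}_{\bm h_n}(\bm t_0)$ into its bias (nonnegative by \eqref{mainkercondition}) plus a centred Gaussian of standard deviation $s_n$, and noting that the $\kappa_\alpha s_n$ term in \eqref{lhat4} is $o(\rho_n)$, one obtains
\[
f(\bm t_0)-\hat\ell(\bm t_0)\ \le\ \bigl(f(\bm t_0)-\e\hat f^{\ell}_{\bm h_n}(\bm t_0)\bigr)\ +\ \Gamma\!\bigl(2^d\textstyle\prod_i h_{n,i}\bigr)\,s_n\ +\ o_{\p}(\rho_n).
\]
A first-order Taylor expansion of the $C^1$ function $f$ gives $\text{bias}=-\langle1,\psi_1^\ell\rangle^{-1}\sum_i h_{n,i}a_i\langle x_i,\psi_1^\ell\rangle+o(\rho_n)$; plugging in the explicit moments of $\psi_1^\ell$ (on its support, a reflected simplex, $\psi_1^\ell(\bm x)=1+\sum_i x_i$, whence $-\langle x_i,\psi_1^\ell\rangle=1/(d+2)!$), and using that $h_{n,i}\propto1/a_i$ minimises $\sum_i h_{n,i}a_i$ for a given product by the AM--GM inequality (this is where the geometric mean $L_1$ appears), $\text{bias}=\tfrac{d}{d+2}L_1 v^{1/d}\rho_n+o(\rho_n)$. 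Since $\Gamma(2^d\prod_i h_{n,i})^2=\tfrac{2d}{d+2}\log n\,(1+o(1))$, the right-hand side above equals $q(v)\rho_n(1+o(1))+o_{\p}(\rho_n)$ with $q(v)=\tfrac{d}{d+2}L_1 v^{1/d}+\sqrt{\tfrac{2d}{d+2}}\,\tfrac{\|\psi_1^\ell\|}{\langle1,\psi_1^\ell\rangle}\,v^{-1/2}$, and an elementary optimisation gives $\min_v q(v)=(d(d+1)!)^{1/(2+d)}L_1^{d/(2+d)}=\Delta^{(\ell)}L_1^{d/(2+d)}$. Taking $v$ to be the minimiser yields $f(\bm t_0)-\hat\ell(\bm t_0)\le(1+\epsilon)\Delta^{(\ell)}L_1^{d/(2+d)}\rho_n$ with probability tending to $1$, which is \eqref{monotoneupp}.

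\smallskip
\noindent\textbf{Part (a).} I would run the many-hypotheses scheme of \cite{Dumbgen-Spokoiny-2001,Dumbgen-2003}, treating $\hat\ell$ (the claim for $u$ is symmetric, with up-bumps and $\psi_1^u$). Set $c_n:=(1-\epsilon)\Delta^{(\ell)}L_1^{d/(2+d)}\rho_n$ and fix a small $\delta>0$. Let $\phi\ge0$ be a fixed $C^1$ profile supported in $[-1,1]^d$, vanishing near its boundary, with $\max\phi=1$ and $\|\phi\|^2\prod_i\sup(\partial_i\phi)\le\|\psi_1^\ell\|^2+\delta$ --- a mild mollification of $\psi_1^\ell$, for which $\partial_i\psi_1^\ell\equiv1$ on the support and hence $\|\psi_1^\ell\|^2\prod_i\sup\partial_i\psi_1^\ell=\|\psi_1^\ell\|^2=2/(d+2)!$. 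Fix a small box $U_0\subset U$ about $\bm t_0$ on which $\partial f/\partial x_i\ge(1-\epsilon)a_i$, and place $N_n\asymp\mathrm{vol}(U_0)/\prod_i h_{n,i}\asymp c_n^{-d}\to\infty$ pairwise disjoint translates $B_j=\bm s_j+\prod_i[0,h_{n,i}]\subset U_0$ with $h_{n,i}:=c_n\sup(\partial_i\phi)/((1-\epsilon)a_i)$; then $\log N_n=\tfrac{d}{d+2}\log n\,(1+o(1))$. Put $g_j(\bm x):=c_n\phi((\bm x-\bm s_j)\star\bm h_n^{-1})$ and $f_j:=f-g_j$. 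By construction $\partial(f-g_j)/\partial x_i=\partial f/\partial x_i-c_n h_{n,i}^{-1}\partial_i\phi\ge0$ on $B_j$ and $f_j=f$ off $\bigcup_j B_j$, so $f_j\in\mathcal F_1$; and using $(\Delta^{(\ell)})^{2+d}=d(d+1)!$ and $n c_n^{d+2}\asymp\log(en)$ one checks $n\|g_j\|^2=n c_n^{d+2}\|\phi\|^2\prod_i\sup(\partial_i\phi)/L_1^d\,(1+o(1))\le(1-\delta)\,2\log N_n$ for large $n$ (for $\delta$ small relative to $\epsilon$). Now let $[\ell,u]$ cover every $g\in\mathcal F_1$ with probability $\ge1-\alpha$; put $A_j:=\{\ell(\bm t)\le f_j(\bm t)\le u(\bm t)\text{ fails for some }\bm t\}$, so $\p_{f_j}(A_j)\le\alpha$, and $\mathcal E:=\{\|f-\ell\|_U<c_n\}$. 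Since $g_j$ attains $c_n$ at some $\bm x_j^{\ast}\in B_j$, on $\mathcal E$ one has $\ell(\bm x_j^{\ast})>f(\bm x_j^{\ast})-c_n=f_j(\bm x_j^{\ast})$ for every $j$, so $\mathcal E\subset\bigcap_j A_j$. With $L_j:=d\p_{f_j}/d\p_f=\exp(-\sqrt n\int g_j\,dW-\tfrac n2\|g_j\|^2)$, the $\int g_j\,dW$ are independent centred Gaussians (disjoint supports), and
\[
\alpha\ \ge\ \frac1{N_n}\sum_{j=1}^{N_n}\p_{f_j}(A_j)\ =\ \frac1{N_n}\sum_{j=1}^{N_n}\e_f\bigl[\ind_{A_j}L_j\bigr]\ \ge\ \e_f\Bigl[\ind_{\bigcap_j A_j}\cdot\frac1{N_n}\sum_{j=1}^{N_n}L_j\Bigr].
\]
A truncated second-moment argument --- replacing $L_j$ by $L_j\wedge\tau_n$ with $\tau_n:=N_n^{1-\delta/2}$ and using $n\|g_j\|^2\le(1-\delta)2\log N_n<2\log\tau_n$ --- shows $N_n^{-1}\sum_j L_j\to1$ in $\p_f$-probability. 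Hence $\p_f(\bigcap_j A_j)\le\alpha/(1-\eta)+o(1)$ for every $\eta>0$, so $\limsup_n\p_f(\mathcal E)\le\alpha$, i.e.\ $\liminf_n\p_f(\|f-\ell\|_U\ge c_n)\ge1-\alpha$; since $c_n\ge(1-\epsilon)\Delta^{(\ell)}L_1^{d/(2+d)}\rho_n$, this is part~(a).

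\smallskip
\noindent\textbf{Expected main obstacle.} The delicate step is the sharp probabilistic estimate in part~(a): the truncated-likelihood bound must be carried up to the \emph{critical} information level $n\|g_j\|^2\approx2\log N_n$ (not the naive $\log N_n$ that a plain second-moment bound gives), and it is precisely this threshold, together with the identification of the least-favourable bump shape as a mollification of $\psi_1^\ell$ (for which $\sup_i\partial_i\psi_1^\ell=1$ and $\|\psi_1^\ell\|^2=2/(d+2)!$), that makes the lower constant of part~(a) coincide with the upper constant of part~(b). The remaining ingredients --- the $d$-dimensional reduction to the geometric mean $L_1$ via the AM--GM inequality, the moment evaluations for $\psi_1^\ell$ and $\psi_1^u$, and absorbing the mollification and the slacks $\epsilon,\delta$ --- are routine bookkeeping.
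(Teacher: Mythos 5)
Your proposal is essentially correct and follows the same overall scheme as the paper's proof (which in turn extends Theorem~4.2 of D\"umbgen, 2003, to $d$ dimensions). The numerical computations are right: $\|\psi_1^\ell\|^2=2/(d+2)!$, $\langle 1,\psi_1^\ell\rangle=1/(d+1)!$, $-\langle x_i,\psi_1^\ell\rangle=1/(d+2)!$, and the optimisation of $q(v)$ yields $\Delta^{(\ell)}L_1^{d/(2+d)}$ as you claim. There are, however, a few points where your route differs from the paper's and one minor bookkeeping slip worth flagging.

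For part~(b), you bound the bias by a first-order Taylor expansion, obtaining an \emph{asymptotic equality} $\tfrac{d}{d+2}L_1 v^{1/d}\rho_n+o(\rho_n)$, and then optimise over $v$. The paper instead proves a deterministic inequality (its Lemma~\ref{optlemma2}): $\langle f(\bm t+\bm h\star\cdot)-f(\bm t)+(M_1^\star+\epsilon^\star)\tilde h,\psi_1^\ell\rangle\geq (M_1^\star+\epsilon^\star)\tilde h\|\psi_1^\ell\|^2$, valid for any $\bm h$ with $B_\infty(\bm t_0,\bm h)\subset B_\infty(\bm t_0,\bm h_0)$, and obtained by combining the mean value theorem with the upper bound $\partial_i f\le M_i^\star+\epsilon^\star$ on a fixed small box. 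Both routes reach the same constant; the Taylor route is slightly shorter, while the paper's inequality has the advantage of being non-asymptotic and transparently one-sided.

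For part~(a), you replace $\psi_1^\ell$ by a $C^1$ mollification $\phi$ vanishing near its boundary so that the monotonicity of the perturbed $f_j=f-g_j$ follows from a global gradient bound, and then absorb the loss into $\delta$. The paper instead works with $\psi_1^\ell$ directly and proves monotonicity of $f_{\bm t}=f-h_1(M_1^\star-\epsilon^\star)\psi_{\bm t,\bm h}$ via a case analysis over pairs $(\bm x,\bm y)$ inside/outside/straddling the simplicial support $T_{\bm t,\bm h}$ (Lemma~\ref{ftinF1}); this keeps the kernel in the lower-bound perturbation identical to the kernel used in the upper-bound estimator, making the constant matching immediate without a $\delta$-indirection. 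For the reduction of the minimax probability, the paper invokes Lemma~6.2 of D\"umbgen--Spokoiny~(2001) applied at the sharp threshold $w_n/\sqrt{2\log|G|}=(1-\epsilon_n)^{(2+d)/2}\to1$ with $\epsilon_n\sqrt{\log n}\to\infty$; your truncated second-moment argument with fixed slack $\delta$ is a legitimate alternative and is slightly weaker but suffices because the target statement already carries a factor $(1-\epsilon)$.

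One genuine slip you should fix: you define the disjoint boxes $B_j=\bm s_j+\prod_i[0,h_{n,i}]$, but $g_j(\bm x)=c_n\phi((\bm x-\bm s_j)\star\bm h_n^{-1})$ with $\phi$ supported in $[-1,1]^d$ has support $\bm s_j+\prod_i[-h_{n,i},h_{n,i}]$, which is $2^d$ times larger and not contained in $B_j$. Either declare $\phi$ supported in $[0,1]^d$, or redefine $B_j$ to match the actual support and space the grid by $2h_{n,i}$ (as the paper does, $t_i=t_{0,i}+2k_ih_i$). This does not change $\log N_n=\tfrac{d}{2+d}\log n\,(1+o(1))$, so the final constant is unaffected, but as written the statement ``$f_j=f$ off $\bigcup_j B_j$ and the $\int g_j\,dW$ have disjoint supports'' is not literally true.

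Finally, you correctly identify that the critical step is taking the total information level $n\|g_j\|^2$ up to $(1-\delta)\,2\log N_n$ (not $\log N_n$), and that it is the matching of the least-favourable shape with $\psi_1^\ell$ that makes the constants in~(a) and~(b) coincide. That is precisely the structure of the paper's argument.
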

	Theorem \ref{opconstant}  (proved in Section \ref{optl}) states that the length of any confidence band for $f\in \mathcal{F}_1$ with guaranteed coverage probability $1-\alpha$, is at least $(\log n/n)^{1/(2+d)}$ up to a constant factor. Further, this optimal length is essentially achieved locally at ${\bm t}_0$ by our constructed confidence band. We can, in particular, exactly compute the constants $\Delta^{(\ell)}$ and $\Delta^{(u)}$ appearing in the above result (see~\eqref{eq:Delta_z}). For example, for $d=2$, $\Delta^{(\ell)} = \Delta^{(u)}  \approx 1.86121$.

	Note that the asymptotic probabilities in the upper bound results in part (b) of Theorem \ref{opconstant} do not depend on $\alpha$, unlike the corresponding lower bound probabilities in part (a). This can be understood from the fact that the random variable in part (a) is a supremum of pointwise deviations of the lower and upper confidence bands from the true function over a neighborhood, unlike the corresponding random variable in part (b), which just captures the deviation at $\bm t_0$. To draw a simple analogy, note that the $\alpha^{\mathrm{th}}$ quantile $q_\alpha^{(n)}$ of the maximum of a sequence of i.i.d.~Gaussians $Z_1,\ldots,Z_n$ satisfies $\p(\max_{1\le i\le n} Z_i\ge q_\alpha^{(n)}) = 1-\alpha$, but since $q_\alpha^{(n)}$ is of the order $\sqrt{\log n}$, $\p(Z_1\le q_\alpha^{(n)}) = 1-o(1)$.\footnote{Another analogy where the quantiles of the supremum have the same order as each individual random variable is given below: Consider the supremum of the Brownian motion on $[0,1]$ which has the same distribution as the absolute value of a standard Gaussian. In this case, the quantiles of the supremum is of the same order as the corresponding quantiles of the process at any single point, but the former is always larger than the latter.}

	Next, we state our optimality result for the class of multivariate convex functions $\mathcal{F}_2$. Theorem~\ref{opconstantcx} below is proved in Section~\ref{propconstantcx}.

	{\color{black}
		\begin{theorem}\label{opconstantcx}
			Let $f \in \mathcal{F}_2$ be a twice continuously differentiable function in an open neighborhood $U$ of $\bm t_0 \in (0,1)^d$ such that 
			\begin{equation*}%\label{optcond1}
				L_2 \equiv L_2[\bm t_0] \equiv L_2[f,\bm t_0]:= \mathrm{det}(H(\bm t_0))^{1/d} > 0
			\end{equation*}
			where $H(\bm t_0) \in \mathbb{R}^{d\times d}$ denotes the Hessian of $f$ at $\bm t_0$, and $\mathrm{det}(\cdot)$ denotes the determinant operator. Define $$\rho_n :=\left(\frac{\log(en)}{n} \right)^{\frac{2}{4+d}}~,$$  $$\Delta^{(\ell)}:=\left(\frac{d+4}{2d}\left(\sqrt{\frac{2(d+3)}{d+1}}\right)^d \|{\psi^{\ell}_2}\|^2\right)^{-\frac{2}{4+d}},\qquad\qquad \Delta^{(\ell\star)} := \Delta^{(\ell)} d^\frac{d}{d+4}$$
			$$\Delta^{(u)}:=\left(\frac{d+4}{2d}\left(\sqrt{2}\right)^d \|{\psi^{u}_2}\|^2\right)^{-\frac{2}{4+d}}\qquad\text{and} \qquad \Delta^{(u\star)} := \Delta^{(u)} d^\frac{d}{d+4},$$
			where $\psi_2^u$ and $\psi_2^\ell$ (respectively) as defined in \eqref{psi2defnt}.
			Also, define
			$$L_{2,\star} \equiv L_{2,\star}[\bm t_0] \equiv L_{2,\star}[f,\bm t_0] := \left[\prod_{i=1}^d H(\bm t_0)_{i,i}\right]^{1/d}.$$ 
			Then we have the following:
			\begin{itemize}
				\item [(a)]
				Let $[\ell,u]$  be any confidence band such that, for some $\alpha \in (0,1)$, $$\p_g\big(\ell(\bm t) \leq g(\bm t) \leq u(\bm t) \mbox{ for all } \bm t \in [0,1]^d\big) \geq 1-\alpha  \quad \mbox{ for all } g \in \mathcal{F}_2.$$ Then, for any $\epsilon>0$,
				\begin{equation}\label{optcond1}
					\begin{split}
						\liminf_{n \to \infty }\p_{f}\left(\|f- \ell\|_{U} \geq (1-\epsilon) \Delta^{(\ell)} L_2^{\frac{d}{4+d}}[f,\bm t_0] \rho_n\right) \geq 1-\alpha, \\
						\liminf_{n \to \infty} \p_{f}\left(\|u-f\|_{U} \geq (1-\epsilon) \Delta^{(u)} L_2^{\frac{d}{4+d}}[f,\bm t_0] \rho_n\right) \geq 1-\alpha.
					\end{split}
				\end{equation}

				\item[(b)] Moreover, let $[\hat{\ell},\hat{u}]$ be the confidence band with coverage probability $1-\alpha$ as defined in \eqref{lhat4} and \eqref{uhat4}, with kernels as in \eqref{psi2defnt}. Then, for any $\epsilon>0$, we have 
				\begin{equation}\label{optcond22}
					\begin{split}
						\lim_{n \to \infty} \p_{f}\left((f-\hat{\ell})(\bm t_0) \leq (1+\epsilon)\Delta^{(\ell \star)}L_{2,\star}^{\frac{d}{4+d}}[f,\bm t_0] \rho_n\right) = 1,\\
						\lim_{n \to \infty} \p_{f}\left((\hat{u}-f)(\bm t_0) \leq (1+\epsilon)\Delta^{(u \star)} L_{2,\star}^{\frac{d}{4+d}}[f,\bm t_0] \rho_n\right) = 1.
					\end{split}
				\end{equation}
				
				\item[(c)] In the special case when the Hessian matrix of $f$ at $\bm t_0$ is diagonal, one has the following for every $\epsilon > 0$,
				\begin{equation}\label{optcond223}
					\begin{split}
						\lim_{n \to \infty} \p_{f}\left((f-\hat{\ell})(\bm t_0) \leq (1+\epsilon)\Delta^{(\ell)}L_{2}^{\frac{d}{4+d}}[f,\bm t_0] \rho_n\right) = 1,\\
						\lim_{n \to \infty} \p_{f}\left((\hat{u}-f)(\bm t_0) \leq (1+\epsilon)\Delta^{(u)} L_{2}^{\frac{d}{4+d}}[f,\bm t_0] \rho_n\right) = 1.
					\end{split}
				\end{equation}

			\end{itemize}
		\end{theorem}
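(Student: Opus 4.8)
The plan is to mirror the proof of Theorem~\ref{opconstant} (the monotone counterpart) and of \citet[Theorem 4.2]{Dumbgen-2003} (the case $d=1$), substituting convexity for monotonicity. Part~(a) is a minimax lower bound proved by a multi-point testing argument: for each large $n$ one produces a large family of convex ``least favourable'' perturbations of $f$, indistinguishable from $f$ at the resolution set by the multiscale statistic, and argues that any band covering all of them must fail to be narrow on $U$ with probability at least $1-\alpha-o(1)$ under $\p_f$. Parts~(b) and~(c) are deterministic upper bounds on $(f-\hat\ell)(\bm t_0)$ and $(\hat u-f)(\bm t_0)$: one plugs a near-optimal bandwidth into \eqref{lhat4}--\eqref{uhat4} and controls the bias by a second-order Taylor expansion of $f$ near $\bm t_0$ and the stochastic fluctuation by the almost-sure finiteness of $T(\pm\psi_2^{z})$ (\citet[Theorem 2.1]{multiscale}).

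For part~(a), first shrink $U$ so that the Hessian of $f$ is uniformly positive definite there, and write $H(\bm t_0)=O^\top\,\mathrm{diag}(\lambda_1,\dots,\lambda_d)\,O$. For the lower-band assertion, take $f_{n,j}:=f-g_{n,j}$ with $g_{n,j}(\bm x):=\delta_n\,\psi_2^\ell\big(M_n(\bm x-\bm t_{n,j})\big)$, where $M_n:=\mathrm{diag}(\mu_1,\dots,\mu_d)\,O$ with $\mu_i^2:=(d+1)\lambda_i\big/\big(2(d+3)\delta_n\big)$ and $\{\bm t_{n,j}\}_{j\le m_n}$ is a maximal collection of points of $U$ with pairwise-disjoint associated ellipsoids. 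This choice of $M_n$ is precisely the one making $\mathrm{Hess}(f-g_{n,j})\succeq 0$ on the support, using that $\mathrm{Hess}(\psi_2^\ell)\preceq\tfrac{2(d+3)}{d+1}I$ there and that the kinks of $-\psi_2^\ell$ at the centre and at the support boundary are convex, so $f_{n,j}\in\mathcal{F}_2$ for every $n$; this is also why the factor $\big(\sqrt{2(d+3)/(d+1)}\,\big)^d$ enters $\Delta^{(\ell)}$. One then checks $m_n\asymp \mathrm{vol}(U)\,\mathrm{det}(H(\bm t_0))^{1/2}\,\delta_n^{-d/2}$ and $\|g_{n,j}\|^2=\big(\sqrt{2(d+3)/(d+1)}\,\big)^d\|\psi_2^\ell\|^2\,\delta_n^{(4+d)/2}\,\mathrm{det}(H(\bm t_0))^{-1/2}$, so that the calibration $n\|g_{n,j}\|^2=(1+o(1))\,2\log m_n$ forces $\delta_n=(1+o(1))\,\Delta^{(\ell)}L_2^{d/(4+d)}\rho_n$. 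On the event that $[\ell,u]$ covers $f_{n,j}$ one has $\ell(\bm t_{n,j})\le f_{n,j}(\bm t_{n,j})=f(\bm t_{n,j})-\delta_n$, hence $\|f-\ell\|_U\ge\delta_n$; since $\p_{f_{n,j}}(\text{cover }f_{n,j})\ge 1-\alpha$ and the likelihood ratios $d\p_{f_{n,j}}/d\p_f=\exp\big(-\sqrt n\int g_{n,j}\,dW-\tfrac n2\|g_{n,j}\|^2\big)$ (Cameron--Martin--Girsanov) involve functions with disjoint supports, and hence behave like independent statistics under $\p_f$, the change-of-measure argument used in the proof of Theorem~\ref{opconstant}(a) (following \citet{Dumbgen-2003}, \citet{Dumbgen-Spokoiny-2001}) upgrades this to $\p_f(\|f-\ell\|_U\ge\delta_n)\ge 1-\alpha-o(1)$; replacing $\delta_n$ by $(1-\epsilon)\delta_n$ and taking $\liminf_n$ gives the claim. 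The upper-band assertion is identical with $f_{n,j}:=f+g_{n,j}$ built from $\psi_2^u$, which satisfies $\mathrm{Hess}(\psi_2^u)=-2I$ on its support, whence the factor $(\sqrt 2)^d$ in $\Delta^{(u)}$.

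For parts~(b) and~(c), from \eqref{lhat4} write $(f-\hat\ell)(\bm t_0)=\inf_{\bm h:\bm t_0\in A_{\bm h}}\big[b_{\bm h}+\eta_{\bm h}+\pi_{\bm h}\big]$, with bias $b_{\bm h}:=f(\bm t_0)-\e\hat f^{\ell}_{\bm h}(\bm t_0)\ge 0$, stochastic term $\eta_{\bm h}:=\e\hat f^{\ell}_{\bm h}(\bm t_0)-\hat f^{\ell}_{\bm h}(\bm t_0)$, and penalty $\pi_{\bm h}=\|\psi_2^\ell\|\big(\kappa_\alpha+\Gamma(2^d\prod_ih_i)\big)\big/\big(\langle 1,\psi_2^\ell\rangle(n\prod_ih_i)^{1/2}\big)$. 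A second-order Taylor expansion of $f$ at $\bm t_0$, together with the radial symmetry of $\psi_2^\ell$ (which annihilates the first-order term and all off-diagonal second-order terms), gives $b_{\bm h}=\big(1+o(1)\big)\,C_\ell\sum_{i=1}^d h_i^2 H(\bm t_0)_{i,i}$ as $\|\bm h\|\to 0$, with $C_\ell:=-\big[2d\,\langle 1,\psi_2^\ell\rangle\big]^{-1}\int\|\bm x\|^2\psi_2^\ell(\bm x)\,d\bm x>0$, while \eqref{equT} yields $\eta_{\bm h}\le\big(|T(-\psi_2^\ell)|+\Gamma(2^d\prod_ih_i)\big)\var^{1/2}(\hat f^{\ell}_{\bm h}(\bm t_0))$. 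Restricting the infimum to $\bm h$ with $h_i^2H(\bm t_0)_{i,i}$ constant in $i$ makes the bias AM--GM-optimal for a given $\prod_ih_i$, producing the factor $L_{2,\star}=(\prod_iH(\bm t_0)_{i,i})^{1/d}$ (which is $\ge L_2$ by Hadamard's inequality, with equality precisely when $H(\bm t_0)$ is diagonal); minimizing the remaining scalar then balances $C_\ell d\,(\prod_ih_i)^{2/d}L_{2,\star}$ against $\|\psi_2^\ell\|\Gamma(2^d\prod_ih_i)\big/\big(\langle 1,\psi_2^\ell\rangle(n\prod_ih_i)^{1/2}\big)$ (using $\Gamma(r)\sim\sqrt{2\log(1/r)}$), which pins $(\prod_ih_i)^{2/d}\asymp\rho_n$ and, after an elementary computation of the optimized constant, gives $(1+o(1))\,\Delta^{(\ell\star)}L_{2,\star}^{d/(4+d)}\rho_n$. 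At this scale $\Gamma\asymp\sqrt{\log n}$ dominates the $O_{\p}(1)$ quantities $|T(-\psi_2^\ell)|,|\kappa_\alpha|$, whose net contribution is $O_{\p}(\rho_n/\sqrt{\log n})=o_{\p}(\rho_n)$ and is absorbed by the $(1+\epsilon)$ slack; hence $\p_f\big((f-\hat\ell)(\bm t_0)\le(1+\epsilon)\Delta^{(\ell\star)}L_{2,\star}^{d/(4+d)}\rho_n\big)\to 1$, and likewise for $\hat u$ from \eqref{uhat4} and $\psi_2^u$. Part~(c) follows by carrying out this optimization more sharply when $H(\bm t_0)$ is diagonal: then $L_{2,\star}=L_2$ and no Hadamard slack is incurred, and a careful accounting of the constant replaces $\Delta^{(\ell\star)},\Delta^{(u\star)}$ by $\Delta^{(\ell)},\Delta^{(u)}$, so that the upper bounds of~(c) match the lower bounds of~(a) up to the arbitrarily small $\epsilon$.

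The main obstacle is part~(a). The delicate points are: (i) arranging the perturbations $f_{n,j}$ to remain \emph{globally} convex while saturating the curvature budget — this is where the positive-definiteness of $H$ on $U$ and the one-sided Hessian bound on $\psi_2^\ell$ (resp.\ $\psi_2^u$) are used, and it forces the extremal bump profile to be a rescaled copy of the kernel itself; (ii) calibrating $\delta_n$ so that the per-bump Kullback--Leibler cost equals $(1+o(1))\log m_n$, which is what produces the exact constants $\Delta^{(\ell)},\Delta^{(u)}$; and (iii) running the multi-point change-of-measure argument, where the disjointness of the bump supports is what decouples the likelihood ratios. The determinant $L_2=\mathrm{det}(H(\bm t_0))^{1/d}$ (rather than the Hadamard-larger $L_{2,\star}$) appears in~(a) because the adversary may orient its ellipsoidal bumps along the eigenframe of $H(\bm t_0)$, whereas the isotropic kernel $\psi_2^{z}$ driving the constructed band cannot — which is exactly the gap between~(a)/(c) and~(b) for non-diagonal Hessians.
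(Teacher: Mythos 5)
Your plan for part (a) essentially reproduces the paper's argument: you rotate into the Hessian eigenbasis, build ``bump'' perturbations that are rescaled copies of $\psi_2^\ell$ (resp.\ $\psi_2^u$) whose curvature just saturates that of $f$, verify convexity via $\nabla^2\psi_2^\ell \preceq \tfrac{2(d+3)}{d+1}I$ (the paper packages this as Lemma~\ref{ftinF2} on $\psi^* = (G_A-G_0)(\bm P\cdot)$, but the Hessian calculation is the same and your kink-convexity check is correct), compute the Kullback--Leibler budget per bump, and invoke the D\"umbgen--Spokoiny Gaussian lemma (the paper's Lemma~\ref{normal lemma}) through the Cameron--Martin--Girsanov change of measure. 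The calibration you propose, $n\|g_{n,j}\|^2=(1+o(1))2\log m_n$, does produce $\Delta^{(\ell)}$ and $\Delta^{(u)}$ exactly, and the disjoint-support argument for decoupling the likelihood ratios is the right one.

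For parts (b) and (c), however, there is a genuine gap in how you control the noise. You bound the stochastic term of the decomposition $(f-\hat\ell)(\bm t_0)\le b_{\bm h}+\eta_{\bm h}+\pi_{\bm h}$ by $\eta_{\bm h}\le\bigl(|T(-\psi_2^\ell)|+\Gamma(2^d\prod_i h_i)\bigr)\var^{1/2}(\hat f^\ell_{\bm h}(\bm t_0))$, and then balance the bias against the penalty $\pi_{\bm h}$ which already contains its own $\Gamma$. But adding these two terms gives a $2\Gamma\cdot\var^{1/2}$ penalty, not $\Gamma\cdot\var^{1/2}$. If you actually carry out the bias/penalty optimization with $2\Gamma$ instead of $\Gamma$, the minimizing value of $\tilde h^2$ is inflated by $2^{4/(4+d)}$, so you would obtain $\Delta^{(\ell\star)}\cdot 2^{4/(4+d)}$ rather than $\Delta^{(\ell\star)}$ --- for $d=2$, roughly a $59\%$ overshoot. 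The bias constant $C_\ell$ cannot absorb this, since the factor $2^{4/(4+d)}$ is common to any choice of $C_\ell$. The error is in using the multiscale statistic $T(-\psi_2^\ell)$, which pays an extra $\Gamma(2^d\prod_i h_i)$ precisely because it controls a supremum over all $\bm h$; for the \emph{single, deterministic, near-optimal} $\bm h$ you are about to plug in, $\eta_{\bm h}$ is simply one $N\bigl(0,\var(\hat f^\ell_{\bm h}(\bm t_0))\bigr)$ random variable with no $\Gamma$-penalty attached. This is exactly why the paper does not write a deterministic decomposition at all: it fixes $\tilde h=\sqrt{c\,(H_{11}(\bm t_0)+\epsilon^\star)^{-4/(4+d)}\rho_n}$, observes that $\{(f-\hat\ell)(\bm t_0)\ge X\}$ forces the corresponding single standardized Gaussian to lie below $\kappa_\alpha+\Gamma-X\|\psi\|\sqrt{n\prod_i h_i}$, uses Lemma~\ref{optlemma29} (really Lemma~\ref{techresl}, a sharp one-sided inequality for the kernel that is tight exactly on a rescaled quadratic, standing in for your second-order Taylor expansion and valid uniformly rather than only asymptotically) to bound the bias-like term, and then applies a one-sided Gaussian tail bound. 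Replacing your uniform bound on $\eta_{\bm h}$ with this conditional single-Gaussian argument recovers the single $\Gamma$ and hence $\Delta^{(\ell\star)}$, $\Delta^{(u\star)}$ in (b) and, via the Hadamard equality, $\Delta^{(\ell)}$, $\Delta^{(u)}$ in (c).
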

		
	}
	Part (a) of the above result gives a lower bound on the maximal (local) deviation of any honest confidence band (for the class of convex functions) around the true function.  We can, in particular, exactly compute the constants $\Delta^{(\ell)}$ and $\Delta^{(u)}$ appearing in the above result. For example, for $d=2$, $\Delta^{(\ell)} \approx 1.19788$ and $\Delta^{(u)} \approx 0.68278$. Part (c) shows that our confidence band attains this minimal length at $\bm t_0$ when the true function $f$ has a diagonal Hessian matrix at $\bm t_0$. In contrast, if the Hessian is not diagonal, our confidence band may have slightly larger width at ${\bm t}_0$, as indicated in part (b). The following remark discusses this discrepancy.
	
	\begin{rem}[On the lower and upper bounds in Theorem \ref{opconstantcx}]
		{ We would like to highlight that the constant terms in the upper and lower bounds appearing  in \eqref{optcond1} and \eqref{optcond22} are different. This discrepancy is reflected by the replacement of the geometric mean of the spectrum of the Hessian of $f$ at $\bm t_0$ (i.e., $L_2[f,\bm t_0]$) by the geometric mean of the diagonal entries of the Hessian matrix (i.e., $L_{2,\star}[f,\bm t_0]$). Further, there is an inflation by a multiplicative factor depending on the dimension $d$ (see Theorem \ref{opconstantcx} part (b)). The main two terms $L_2$ and $L_{2,\star}$ in the lower and upper bounds in \eqref{optcond1} and \eqref{optcond22} satisfy $L_2 \le L_{2,\star}$. }  Intuitively, this discrepancy is caused because we are limited to choosing a diagonal bandwidth matrix, whereas the orientation of the underlying multivariate convex function $f$ is not necessarily restricted along the standard coordinate axes. Note that when $d=1$, $L_{2} = L_{2,\star}$ and $\Delta^{(z)} = \Delta^{(z\star)}$ for $z\in \{u,\ell\}$, and hence both the upper and lower bounds match (as in \cite{Dumbgen-2003}). Observe that such a discrepancy does not happen for coordinate-wise monotone functions (cf.~Theorem \ref{opconstant}), as these functions are inherently tied to the standard coordinate axes.
	\end{rem}

	\begin{rem}[On the proofs of part (a) of Theorems \ref{opconstant} and \ref{opconstantcx}]
		The proofs of the lower bounds (part (a) of Theorems~\ref{opconstant} and \ref{opconstantcx}) involve the following main ideas. As a first step, one constructs a grid in $[0,1]^d$ with spacings given by a bandwidth parameter and centered at $\bm t_0$. For each such grid point $\bm t$, one defines a function $f_{\bm t}$ by perturbing the true function $f$ suitably. %by an amount proportional to the kernel function corresponding to $\bm t$ and $\bm h$. %The proportionality constant $c$ depends on the particular shape constraint, and involves either the minimum entry of the gradient of $f$ at $\bm t_0$ or the minimum eigenvalue of its Hessian at $\bm t_0$, depending on whether the true function is coordinate-wise nondecreasing or convex, respectively. 
		The second step is to show that all these perturbed functions satisfy the corresponding shape constraint (see Lemmas~\ref{ftinF1} and~\ref{ftinF2}). As a next step, one shows that the probability that the deviation between the upper and lower limits (of any honest confidence band with coverage probability $1-\alpha$) exceeds a suitable constant (depending on the bandwidth parameters) is lower bounded by $1-\alpha$ minus a remainder term (depending on the perturbation functions). %, {\color{red} that depends on the minimum difference between the probabilities of the event that the perturbed function lies between the confidence band limits under the probability measures with parameters equal to the true and the perturbed function, respectively}. 
		One then argues that this remainder term is asymptotically negligible by expressing it in terms of an average of the likelihood ratio between the measures at the perturbed and the true function, and applying the Cameron-Martin-Girsanov theorem in stochastic calculus to evaluate this likelihood ratio. As a final step, several parameters are tuned appropriately to obtain the optimal constant (in the statement of the theorems).
	\end{rem}
	
	%{\color{red} \begin{rem}[On the proofs of part (b) of Theorem~\ref{opconstant} and \ref{opconstantcx}] As the random fluctuations of $f({\bm t_0}) - \hat{\ell}({\bm t_0})$ and $f({\bm t_0}) - \hat{u}({\bm t_0})$, properly normalized, are distributed as standard normal, the probabilities in~\eqref{optcond22} can be expressed in terms of the standard normal distribution functions. The main step in the proofs involve lower bounding the bias terms $f({\bm t_0}) - \hat{\ell}({\bm t_0})$ using a detailed and careful analysis. \end{rem}}
	
	%{\color{blue} \begin{rem}[On the proofs of part (b) of Theorem~\ref{opconstant} and \ref{opconstantcx}] The random fluctuations of $f({\bm t_0}) - \hat{\ell}({\bm t_0})$ and $f({\bm t_0}) - \hat{u}({\bm t_0})$ are stochastically dominated by any choice of $\bm h$ instead of the supremum / infimum  in the expressions \eqref{lhat4} and \eqref{uhat4}. Also note that, for any choice of $\bm h$ the expression follows a normal distribution. The proof of part (b) of Theorem~\ref{opconstant} and \ref{opconstantcx} essentially boils down to finding the optimal choice of $\bm h$ and bounding the bias our (multiscale penalized) kernel estimator.\end{rem}}
	
	\begin{rem}[On the proofs of part (b) of Theorems~\ref{opconstant} and \ref{opconstantcx}] Observe that the random fluctuation of $f({\bm t_0}) - \hat{\ell}({\bm t_0})$ is stochastically dominated by the difference between $f(\bm t_0)$ and any particular term (corresponding to a bandwidth parameter) in the supremum in \eqref{lhat4}. The latter in turn can be decomposed into three terms---a random term having Gaussian fluctuations, a bias-like term arising from the kernel estimator and the multiscale penalization term. %The bias-like term can be further bounded by the scale of the bandwidth parameter $\bm h$. 
		The main crux of the proof involves a delicate choice of the bandwidth parameter which balances the bias-like term and the multiscale penalization term, thereby enabling us to lower bound the probability in~\eqref{monotoneupp} (and~\eqref{optcond22}).
	\end{rem}

	\section{Simulation studies}\label{simulation studies}
	In this section we construct confidence bands for different shape-restricted regression functions. For simulation purposes, instead of the continuous white noise model (\ref{eq:Mdl}) we consider its discrete analogue as detailed below.
	
	Let us start with the connection to nonparametric regression on gridded design. Let $\bm x_1,\ldots, \bm x_n \in \R^d$ be an enumeration of the  $m \times \cdots \times m $ uniform grid  $G_m^d:=\{1/m,2/m,\ldots,(m-1)/m,1\}^d$ where $m^d = n$. Let us look at the following nonparametric regression model: 
	\begin{equation}\label{discretemodel}
		Y_i = f({\bm x_i}) + \epsilon_i, \qquad \mbox{ for } i=1,\ldots,n, \end{equation} 
	where $f:[0,1]^d \to \R$ is the unknown regression function and $ \epsilon_i$'s are i.i.d.~standard normal random variables. For a kernel function  $\psi:\R^d \to \R$ and $\bm h,\bm t \in G^d_m$, such that $\bm t-\bm h,\bm t+ \bm h \in G^d_m$ we can define a kernel estimator $\hat{f}_{\bm h}$ of $f$ as 
	\begin{equation*} 
		\hat{f}_{\bm h}(\bm t) := \frac{ \sum_{i: \bm x_i \in B_{\infty}(\bm t,\bm h) } Y_i \,\psi\big( (\bm x_i-\bm t)/ \bm h\big)}{\sum_{i: \bm x_i \in B_{\infty}(\bm t,\bm h) }  \,\psi\big( (\bm x_i-\bm t)/ \bm h\big)}
	\end{equation*} 
	where by $(u_1,\ldots, u_d)/(h_1,\ldots, h_d)$ we mean the vector $(u_1/h_1,\ldots, u_d/h_d)$. We can also define the standardized kernel estimator as  \begin{equation*} 
		\hat{\Psi}_n(\bm t,\bm h) := \frac{ \sum_{i: \bm x_i \in B_{\infty}(\bm t,\bm h) } Y _i \,  \psi\big( (\bm x_i-\bm t)/\bm h\big)}{\sqrt{\sum_{ i: x_i \in B_{\infty}(\bm t,\bm h) }\psi^2\big( (\bm x_i-\bm t)/\bm h\big)}}. 
	\end{equation*} 
	Then the multiscale statistic for this regression problem reduces to 
	\begin{equation}\label{eq:T_n}
		T_n(Y,\psi):= \sup_{\bm h \in G^d_m : \bm t-\bm h,\bm t+\bm h \in G^d_m} \sup_{\bm t \in G^d_m} |\hat{\Psi}_n(\bm t,\bm h)| -  \Gamma\left(|B_{\infty}(\bm t,\bm h) \cap G^d_m|\right)
	\end{equation}
	where $|B_{\infty}(\bm t,\bm h) \cap G^d_m|$ denotes the number of elements in $B_{\infty}(\bm t,\bm h) \cap G^d_m$ and $Y\equiv (Y_1,\ldots,Y_n)$.
	\begin{figure}
		\centering
		\includegraphics[scale=.30]{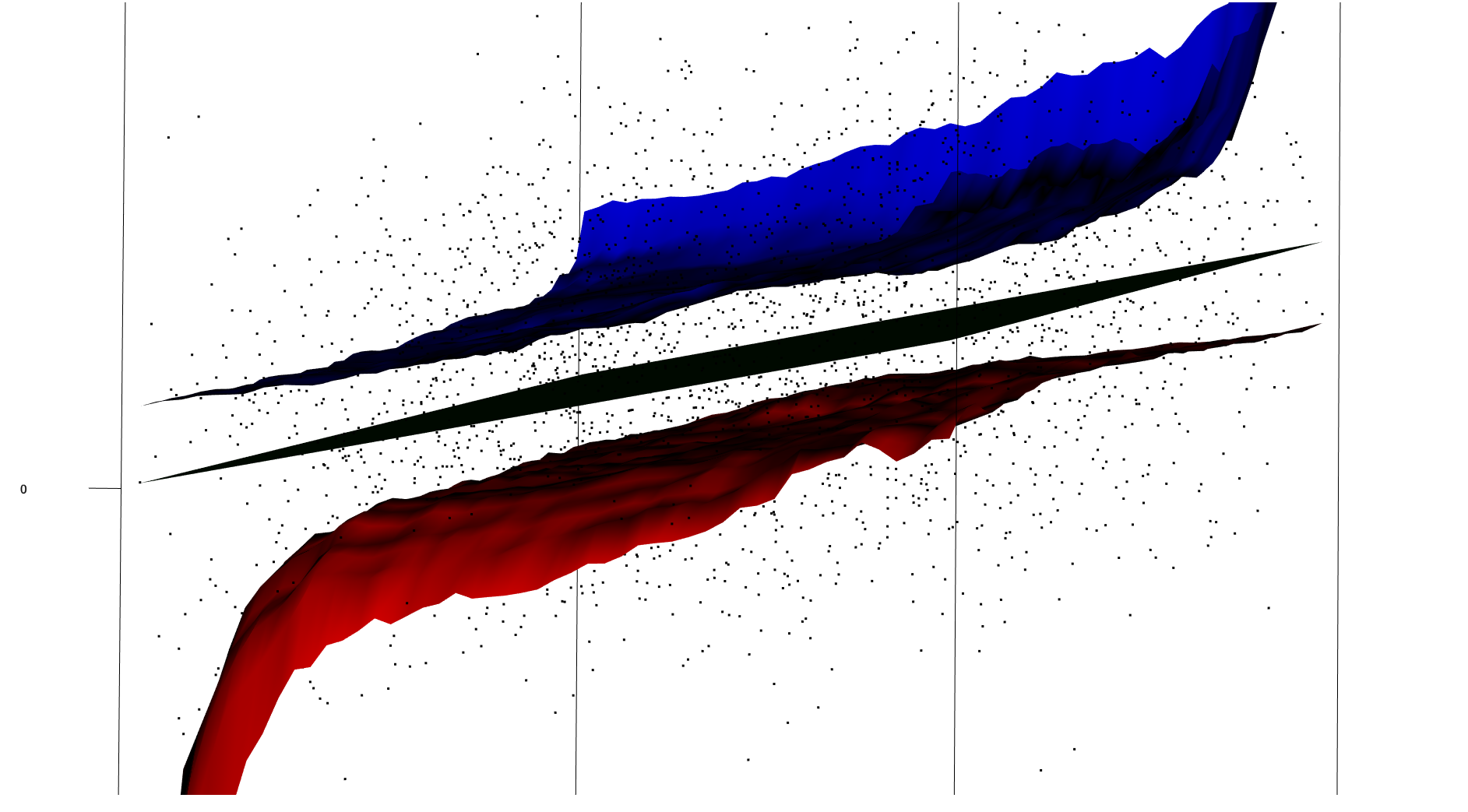}
		\caption{Confidence band for the function $f(x_1, x_2) = x_1 + x_2$ assuming $f$ is coordinate-wise isotonic
			and $n = 50^2$.}
		\label{Isotonic Confidence Bands-1}
	\end{figure}
	
	For our simulation studies we  consider $d=2$. At first we consider the regression function $f(x_1,x_2)=x_1+x_2$. In our simulation studies we consider data on a $50 \times 50$ grid on $[0,1]^2$ and assume that the underlying regression function is coordinate-wise nondecreasing. Figure~\ref{Isotonic Confidence Bands-1} shows our constructed confidence band with nominal coverage probability 0.95. Here we would like to point out that the confidence band has smaller width around the center of the rectangle and the width gets larger as we move towards the sides; this is expected as there are smaller number of data points close by to average over near the corners.
	\begin{figure}
		\centering
		\includegraphics[scale=.30]{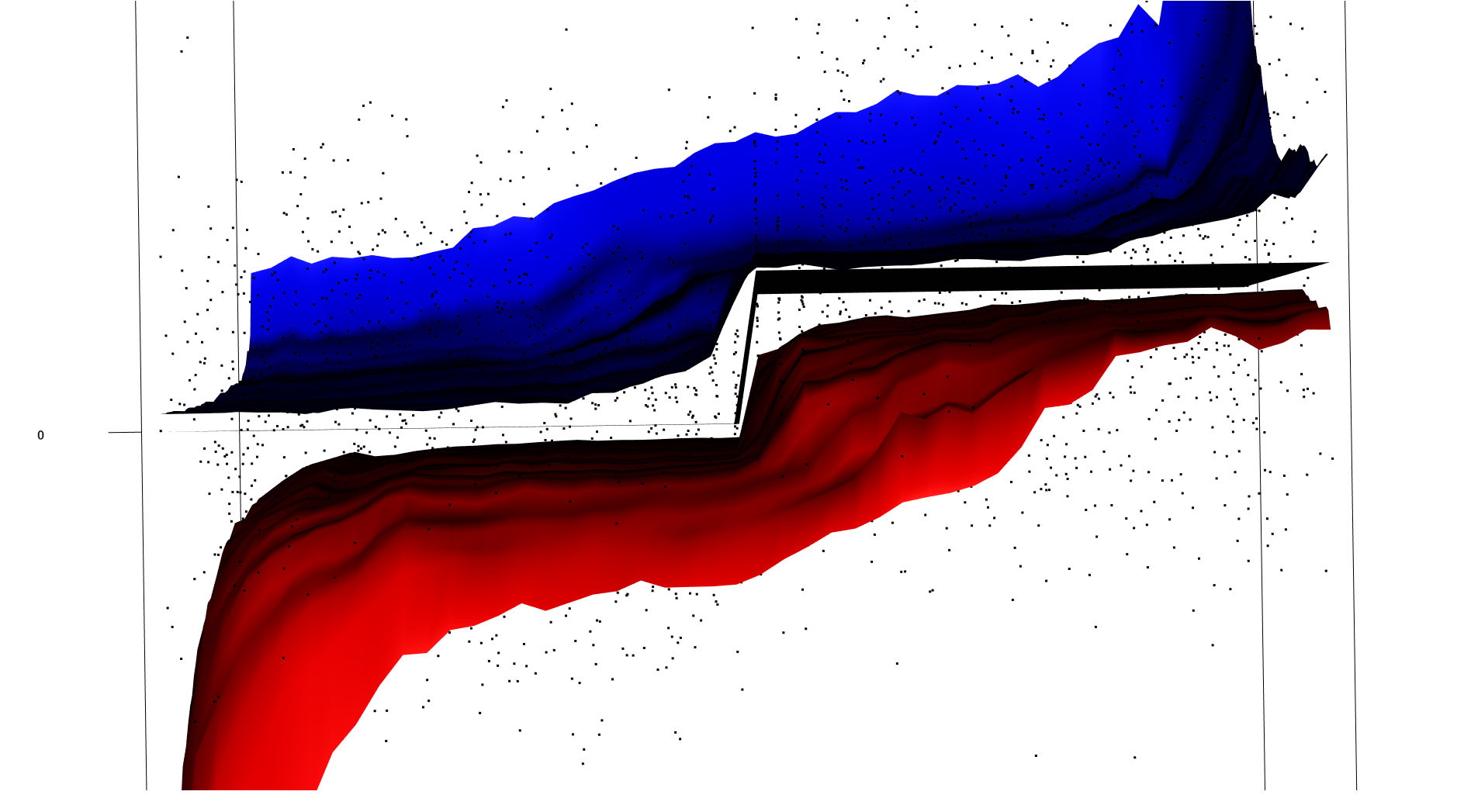}
		\caption{Confidence band for the function $f(x_1, x_2) = \mathbb{I}(x_1\geq0.5)$ assuming $f$ is coordinate-wise isotonic
			and $n = 50^2$.}
		\label{Isotonic Confidence Bands-2}
	\end{figure}
	
	In our second simulation study, we construct a confidence band for the function $f(x_1, x_2) = \mathbb{I}(x_1 \geq 0.5)$; see Figure~\ref{Isotonic Confidence Bands-2}. We can clearly see the local adaptivity of our band in action here. On the regions where the function is constant (regions where $x_1$ is away from 0.5) we see that the confidence band has significantly smaller width.

	\begin{figure}
		\centering
		\includegraphics[scale=.35]{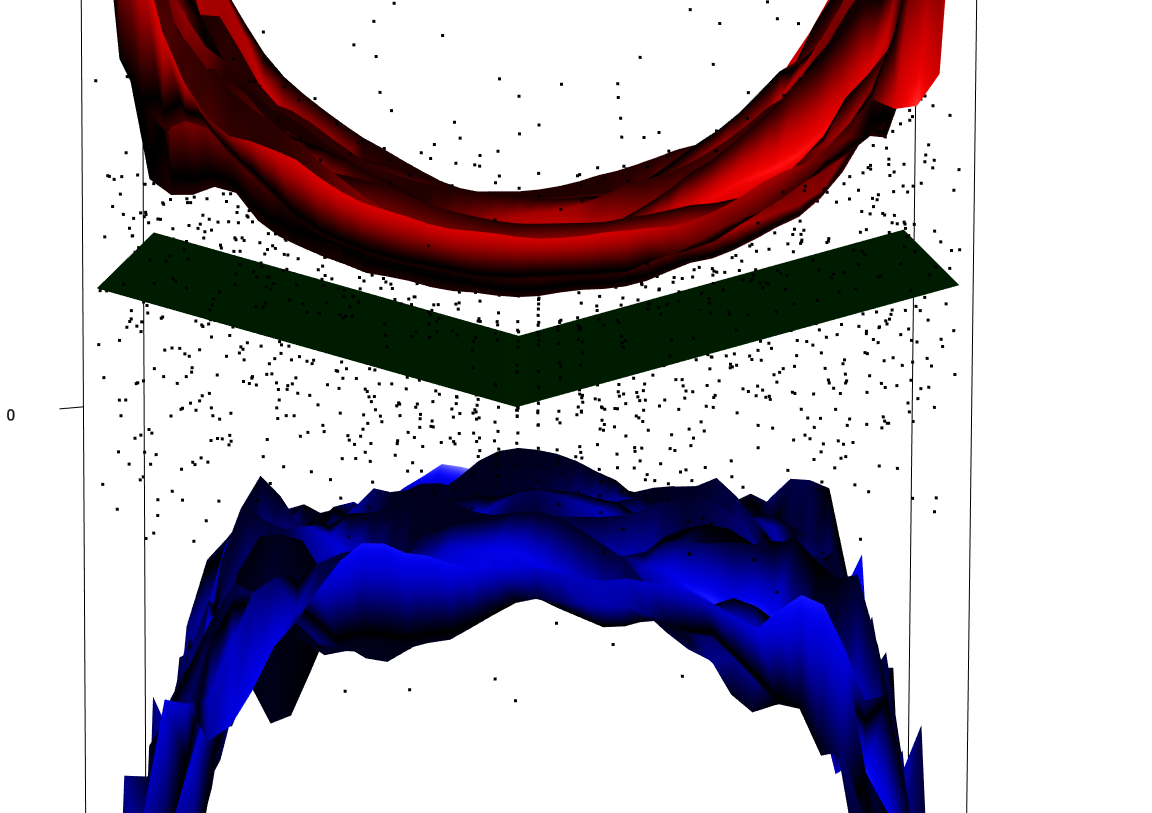}
		\caption{Confidence band for the function $f(x_1, x_2) = |x_1-0.5|$ assuming $f$ is convex
			and $n = 40^2$.}
		\label{Convex Confidence Bands-1}
	\end{figure}
	
	We also construct confidence bands under the assumption that the regression function is convex. Figure~\ref{Convex Confidence Bands-1} shows the  constructed confidence band for the regression function $f(x_1, x_2) = |x_1-0.5|$, whereas Figure~\ref{Convex Confidence Bands-2} corresponds to the regression function $f(x_1, x_2) = 40\{(x_1-0.5)^2+(x_2-0.5)^2\}$. Both of these plots show that the upper confidence band is much closer to the true function than the lower band; this behaviour is expected as we have shown in Theorem~\ref{opconstantcx} that the optimal separation constant for the upper bound $\Delta^{(u)} \approx 0.68278$ is smaller than that for the lower bound $\Delta^{(\ell)} \approx 1.19788$. %{\color{red} A similar phenomenon was also observed in where $d=1$.} 
	We also see that the lower confidence band is closer to the actual function around the center of the rectangle than on the sides. 
	\begin{figure}
		\centering
		\includegraphics[scale=.35]{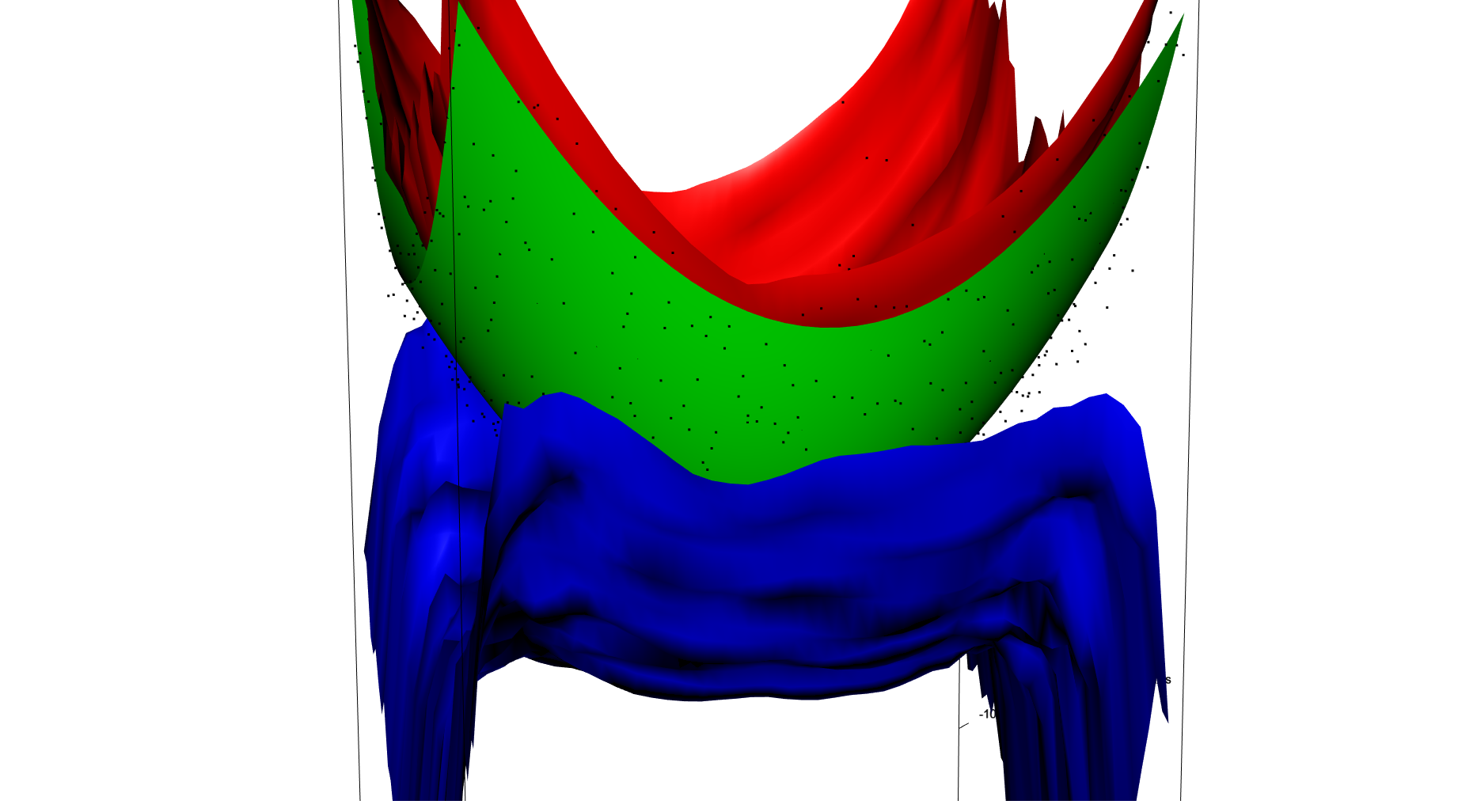}
		\caption{Confidence band for the function $f(x_1, x_2) = 40\{(x_1-0.5)^2+(x_2-0.5)^2\}$ assuming $f$ is convex
			and $n = 40^2$.}
		\label{Convex Confidence Bands-2}
	\end{figure}
	
	Table~\ref{Confidence Coverage} gives the estimated coverage probabilities of our constructed confidence bands for various coordinate-wise nondecreasing and convex functions. It shows that in at least $95\%$ of the cases our confidence bands do contain the actual regression function (which is expected as we have guaranteed converge even in finite samples; cf.~Theorem~\ref{th1}). Moreover, when the underlying function is a constant (if $f \in \mathcal{F}_1$) or affine (if $f \in \mathcal{F}_2$) we do observe that the nominal coverage coincides with the expected coverage (cf.~Proposition~\ref{exactcov}).
	
	\begin{table}
		\centering
		\begin{tabular}{| c | c | c| c| c|}
			\hline 
			$f(x_1,x_2)$& Class & Coverage probability  \\
			\hline
			0 & isotonic & 0.95 \\
			$x_1+x_2$& isotonic & 0.97 \\
			$20(x_1+x_2)$& isotonic & 1.00 \\
			$\mathbb{I}(x_1\geq 0.5)$& isotonic & 0.97\\
			\hline
			0 & convex & 0.95 \\
			$x_1+x_2$& convex & 0.96 \\
			$10(x_1+x_2)$& convex & 0.95 \\
			$(x_1-0.5)^2+(x_2-0.5)^2$& convex & 0.98\\
			\hline
		\end{tabular} 
		\caption{Estimated coverage probabilities of our constructed confidence bands for different regression functions.}
		\label{Confidence Coverage}
		
	\end{table}

	\section{Proofs of the main results}\label{prmainthmbd}
	In this section, we prove the main results of this paper. 
	
	\subsection{Proof of Proposition \ref{exactcov}}\label{pf:exactcov}
	We begin by showing that under the hypothesis of Proposition \ref{exactcov}, we have:
	\begin{equation}\label{symmtr}
		\e (\hat{f}_{\bm h}^\ell(\bt) )= f(\bt) = \e (\hat{f}_{\bm h}^u(\bt))\quad\text{for all}~\bt \in A_{\bh}, \text{ for all}~\bh \in I := (0,1/2]^d.
	\end{equation}
	Towards this, note that since $f,-f\in \mathcal{F}$, and since $-\hat{f}_{\bm h}$ is the kernel estimator of $-f$ as defined in \eqref{kerequ-0} (since for a standard $d$-dimensional Brownian sheet $W$, we have $W \stackrel{D}{=} -W$), we have:
	$$\E(\hat{f^{\ell}_{\bm h}}(\bt)) \leq f(\bm t) \leq \E(\hat{f^u_{\bm h}}(\bm t))\qquad\text{and}\qquad \E(-\hat{f^{\ell}_{\bm h}}(\bt)) \leq -f(\bm t) \leq \E(-\hat{f^u_{\bm h}}(\bm t))$$
	for all $\bm h \in I, ~\bm t\in A_{\bm h}$. This proves \eqref{symmtr}.
	
	Next, observe that in view of \eqref{guarantee1}, all it requires to complete the proof of Proposition \ref{exactcov} is to show that:
	\begin{equation}\label{gr22}
		\left \{\hat{\ell}(\bm t) \leq f(\bm t) \leq \hat{u}(\bm t)~\text{for all}~\bm t\in [0,1]^d \right\} \subseteq \left\{T(\psi^\ell) \leq \kappa_\alpha~,~T(-\psi^u)\leq \kappa_\alpha \right\}.
	\end{equation}
	Towards this, suppose that $\hat{\ell}(\bm t) \leq f(\bm t) \leq \hat{u}(\bm t)~\text{for all}~\bm t\in [0,1]^d$. In view of \eqref{symmtr}, we have:
	\begin{eqnarray*}
		&& \hat{\ell}(\bm t) \leq f(\bm t) \qquad \qquad\,\text{for all}~\bm t\in [0,1]^d\\&\implies& \hat{\ell}(\bm t) \leq \e (\hat{f}_{\bm h}^\ell(\bt)) \qquad~\text{for all}~\bm t\in [0,1]^d\\&\implies& \hat{f}^{\ell}_h(\bt) - \frac{\|\psi^\ell\| \left(\kappa_\alpha + \Gamma(2^d \prod_{i=1}^d h_i)\right)}{\langle 1,\psi^\ell \rangle (n\prod_{i=1}^dh_i)^{1/2}}\leq \e \hat{f}_{\bm h}^\ell(\bt)\qquad (\text{for all}~\bm h \in I, ~\bt \in A_{\bh})\\&\implies& \frac{\hat{f}^{\ell}_h(\bt) - \e \hat{f}_{\bm h}^\ell(\bt)}{\sqrt{\mathrm{Var}(\hat{f^{\ell}_h}(\bt))}} - \Gamma(2^d \prod_{i=1}^d h_i) \le \kappa_\alpha \qquad \qquad (\text{for all}~\bm h \in I, ~\bt \in A_{\bh})\\&\implies& T(\psi^\ell) \le \kappa_\alpha.
	\end{eqnarray*}
	Similarly, one has:
	\begin{eqnarray*}
		&& \hat{u}(\bm t) \ge f(\bm t) \qquad\,\qquad\text{for all } \bm t\in [0,1]^d\\&\implies& \hat{u}(\bm t) \ge \e (\hat{f}_{\bm h}^u(\bt)) \qquad \, \text{for all } \bm t\in [0,1]^d\\&\implies& \hat{f^{u}_h}(\bt) + \frac{\|\psi^u\| \left(\kappa_\alpha + \Gamma(2^d \prod_{i=1}^d h_i)\right)}{\langle 1,\psi^u \rangle (n\prod_{i=1}^dh_i)^{1/2}}\ge \e \hat{f}_{\bm h}^u(\bt)~\qquad  (\text{for all}~\bm h \in I, ~\bt \in A_{\bh})\\&\implies& \frac{\hat{f^{u}_h}(\bt) - \e \hat{f}_{\bm h}^u(\bt)}{\sqrt{\mathrm{Var}(\hat{f^{u}_h}(\bt))}} + \Gamma(2^d \prod_{i=1}^d h_i) \ge -\kappa_\alpha\qquad  \qquad (\text{for all}~\bm h \in I, ~\bt \in A_{\bh})\\&\implies& T(-\psi^u) \le \kappa_\alpha.
	\end{eqnarray*}
	This proves \eqref{gr22} and completes the proof of Proposition \ref{exactcov}. \qed

	\subsection{Proof of Theorem \ref{spadth}}\label{prspadth}
	To prove the first result, we take $\bm h := \varepsilon \boldsymbol{1}_d$. Note that the hypothesis \eqref{eqty} of Proposition \ref{spadth} implies that for all $\bm t \in D$,
	\begin{equation}\label{eq:Bias-0}
		\frac{\langle f(\bt+\bh\star\cdot)-f(\bt),\psi^u(\cdot)\rangle}{\langle1,\psi^u\rangle} = \E (\hat{f}_{\bm h}^u(\bt)) - f(\bt) = 0.
	\end{equation}
	Also note that, for any ${\bm t} \in A_{\varepsilon \mathbf{1}_d}$,
	\begin{eqnarray}
		&& \hat{u}(\bt)-f(\bt) \nonumber\\
		&\leq& \hat{f_{\bm h}^u}(\bt)- f(\bt) + \frac{\|\psi^u\|\left(\kappa_\alpha+\Gamma(2^d\prod_{i=1}^dh_i)\right)}{\langle 1 , \psi^u \rangle (n\prod_{i=1}^dh_i)^{1/2} }\nonumber\\
		&\leq& \frac{\langle f(\bt+\bh\star\cdot)-f(\bt),\psi^u(\cdot)\rangle}{\langle1,\psi^u\rangle} + \frac{\|\psi^u\|(\kappa_\alpha+2\Gamma(2^d(\prod_{i=1}^dh_i))+T(\psi^u))}{\langle 1 , \psi^u \rangle (n\prod_{i=1}^dh_i)^{1/2}}. \label{eq:Upper-bound-U}
	\end{eqnarray}
	For any $\bt \in D \subset A_{\varepsilon \mathbf{1}_d}$, we thus have, using~\eqref{eq:Bias-0},
	\begin{equation*}\label{ufbound}
		\hat{u}(\bt) - f(\bt) \le n^{-1/2}\frac{\|\psi^u\|(\kappa_\alpha+2\Gamma(2^d\varepsilon^d)+T(\psi^u))}{\langle 1 , \psi^u \rangle \varepsilon^{d/2}} \le K_{\varepsilon} n^{-1/2}\left(|\kappa_\alpha|/2 + |T(\psi^u)|\right)
	\end{equation*}
	for some constant $K_{\varepsilon} > 0$. A similar analysis can be done for $f(\bt) - \hat{\ell}(\bt)$, which concludes the proof of the first result by adding the two bounds.

	To prove the second result, we take $\varepsilon \equiv \varepsilon_n  := (\log (en))^{-\frac{1}{d}}$, we can conclude from~\eqref{eq:Upper-bound-U} (and using~\eqref{eq:Bias-0}) that for $\bt \in D_n$, 
	\begin{eqnarray*}\label{ufboundd7}
		\hat{u}(\bt) - f(\bt) &\le& K_1 (\log(en))^{1/2} n^{-1/2} \|\psi^u\|\left(|\kappa_\alpha| + |T(\psi^u)| + \sqrt{\log \log (en)}\right)/\langle 1,\psi^u\rangle\nonumber\\&=& K_{2}\left(\frac{\log(en) \log \log (en)}{n}\right)^{1/2} \left(1+\frac{|\kappa_\alpha| + |T(\psi^u)|}{\sqrt{\log \log (en)}}\right)
	\end{eqnarray*}
	for some constants $K_1, K_{2}>0$. The bound for $f(\bt) - \hat{\ell}(\bt)$ follows similarly, thereby completing the proof of Theorem~\ref{spadth}. \qed

	\subsection{Proof of Theorem \ref{thm2}}\label{prmainthm}
	For $j=1$, we will show that just the facts that $\psi_1^u$ is supported on a subset of $[0,\infty)^d$, $\psi_1^\ell$ is supported on a subset of $(-\infty,0]^d$, and they are non negative, are enough to conclude Theorem \ref{thm2}. This follows easily from the fact that the coordinate-wise nondecreasing nature of $f$ ensures that:
	$$\langle f(\bt+\bh\star\cdot),\psi_1^u\rangle \ge f(\bt) \langle 1,\psi_1^u\rangle\qquad\text{and}\qquad \langle f(\bt+\bh\star\cdot),\psi_1^\ell\rangle \leq f(\bt) \langle 1,\psi_1^\ell\rangle.$$
	Next, we consider the case $k=2$. If we could show that for every convex function $g:\R^d\to \R$, we have:
	\begin{equation}\label{short}
		\langle g, \psi_2^u\rangle \ge g(\boldsymbol 0) \langle 1,\psi_2^u\rangle\qquad\qquad\text{and}\qquad \qquad\langle g, \psi_2^\ell\rangle \leq g(\boldsymbol 0) \langle 1,\psi_2^\ell\rangle,
	\end{equation}
	then we would be done, because then substituting $g(\bm x) := f(\bt + \bh \star \bx)$ (which is a convex function) in \eqref{short} will complete the proof. We can also assume, without loss of generality, that $g(\bm 0)=0$, because otherwise we can apply \eqref{short} on the function $g(\cdot)-g(\bm 0)$.  In view of all these reductions, we just need to show that $\langle g, \psi_2^u\rangle \ge 0$ and $\langle g,\psi_2^\ell\rangle \leq 0$. The first inequality in~\eqref{short} is a direct consequence of Jensen's inequality, because if $\bm U$ denotes a random vector distributed on the $d$-dimensional sphere $B_{d} := \{\bm x \in \R^d: \|\bx\|\leq 1\}$, with density at $\bm x$ being proportional to $1-\|\bx\|^2$, then there exists a constant $C>0$ such that:
	$$\langle g, \psi_2^u\rangle = C \e [g(\bm U)] \ge Cg(\e [\bm U]) = C g(\bm 0) = 0$$
	where we used the fact that $\e [\bm U] = 0$ by symmetry of the distribution of $\bm U$ around $\bm 0$.
	
	Finally, to prove that $\langle g,\psi_2^\ell\rangle \leq 0$, first note that by convexity of $g$ and the fact that $g(\bm 0) = 0$, we have:
	$$g(\alpha \bm y) \leq \alpha g(\bm y)\quad\quad\text{for all}\quad \bm y\in \R^d~\text{and}~\alpha \in [0,1].$$
	We can now substitute $\alpha := (d+3)\|\bm x\|/(d+1)$ and $\bm y := (d+1)\bm x/((d+3)\|\bm x\|)$, and have:
	$$g\left(\frac{(d+1)\bm x}{(d+3)\|\bm x\|}\right)\ge \frac{(d+1)}{(d+3)\|\bm x\|} g(\bm x)\quad\quad\text{when}\quad\|\bm x\|\leq \frac{d+1}{d+3}~.$$
	Similarly, we can substitute $\alpha := (d+1)/((d+3)\|\bm x\|)$ and $\bm y := \bm x$, and have:
	$$g\left(\frac{(d+1)\bm x}{(d+3)\|\bm x\|}\right)\leq \frac{(d+1)}{(d+3)\|\bm x\|} g(\bm x)\quad\quad\text{when}\quad\frac{d+1}{d+3} \leq \|\bm x\|\leq 1~.$$
	Moreover, note that $\psi_2^\ell(\bm x) \leq 0$ when $(d+1)/(d+3) \leq \|\bm x\| \leq 1$ and $\psi_2^\ell(\bm x) \ge 0$ when $\|\bm x\| \leq (d+1)/(d+3)$. We have:
	\begin{eqnarray*}
		\langle g,\psi_2^\ell \rangle &=& \int_{B_{d}} \left(1-\frac{2d+4}{d+1} \|\bx\| + \frac{d+3}{d+1}\|\bx\|^2\right)g(\bm x) \,d\bm x\\
		&\leq& \frac{d+3}{d+1}\int_{B_{d}} \|\bm x\|\left(1-\frac{2d+4}{d+1} \|\bx\| + \frac{d+3}{d+1}\|\bx\|^2\right) g\left(\frac{(d+1)\bm x}{(d+3)\|\bm x\|}\right) \,d\bm x~.%\\&\leq& % \left(\frac{d+3}{d+1}\right) M \int_{S_{d-1}} \|\bm x\|\left(1-\frac{2d+4}{d+1} \|\bx\| + \frac{d+3}{d+1}\|\bx\|^2\right) d\bm x
	\end{eqnarray*}
	At this point, for every $\bm e \in \{-1,1\}^d$, define:
	$$H_{\bm e}:= \{\bm x \in B_{d}: e_i x_i\ge 0~\text{for all}~1\leq i\leq d\}~.$$
	Note that $\{H_{\bm e}\}_{\bm e \in \{-1,1\}^d}$ form the $2^d$ orthants of $\mathbb{R}^d$, intersected with $S_{d-1}$. We will show that for all $\bm e \in \{-1,1\}^d$,
	\begin{equation}\label{orthbreak}
		\int_{H_{\bm e}} \|\bm x\|\left(1-\frac{2d+4}{d+1} \|\bx\| + \frac{d+3}{d+1}\|\bx\|^2\right) g\left(\frac{(d+1)\bm x}{(d+3)\|\bm x\|}\right) d\bm x=0 
	\end{equation}
	which is enough to complete the proof. Towards this, fix $\bm e \in \{-1,1\}^d$, and
	make the following change of variables $\bm x \mapsto \bm y := (y_0,y_1,\ldots,y_{d-1})$ on $H_{\bm e}$:
	$$y_0= \|\bm x\|\qquad\text{and}\qquad y_i = \frac{x_i}{\|\bm x\|}~\text{for all}~ 1 \leq i \leq d-1~.$$
	This transformation is invertible, and we have:
	$$x_i = y_0y_i~\quad \text{for all}~\quad 1 \leq i \leq d-1\qquad\text{and}\qquad x_d := y_0 e_d \sqrt{1-y_1^2-\ldots - y_{d-1}^2}~.$$
	The Jacobian of this transformation is given by:
	
	\[ J(\bm y) = 
	\begin{bmatrix}
		y_1 & y_0 & 0 &\ldots & 0\\
		y_2 & 0 & y_0 &\ldots & 0\\
		\vdots & \vdots & \vdots &\vdots &\vdots\\
		e_d\sqrt{1-\sum_{i=1}^{d-1} y_i^2} & -\frac{y_0y_1e_d}{\sqrt{1-\sum_{i=1}^{d-1} y_i^2}} & -\frac{y_0y_2s_d}{\sqrt{1-\sum_{i=1}^{d-1} y_i^2}} & \ldots & -\frac{y_0y_{d-1}e_d}{\sqrt{1-\sum_{i=1}^{d-1} y_i^2}}
	\end{bmatrix}
	\]
	and hence, we have:
	$$|\mathrm{det}(J(\bm y))| = \frac{y_0^{d-1}}{\sqrt{1-\sum_{i=1}^{d-1}y_i^2}}.$$
	Therefore, defining $\tilde{\bm y} := \left(y_1,\ldots, y_{d-1}, e_d\sqrt{1-y_1^2-\ldots -y_{d-1}^2}\right)$, we have:
	\begin{eqnarray*}
		&&\int_{H_{\bm e}} \|\bm x\|\left(1-\frac{2d+4}{d+1} \|\bx\| + \frac{d+3}{d+1}\|\bx\|^2\right) g\left(\frac{(d+1)\bm x}{(d+3)\|\bm x\|}\right) d\bm x\\&=& \int_{B_{d-1}\bigcap\prod_{i=1}^{d-1} e_i[0,1]} \frac{g\left(\frac{d+1}{d+3} ~\tilde{\bm y}\right)}{\sqrt{1-\sum_{i=1}^{d-1}y_i^2}}\int_0^1 y_0^d\left(1-\frac{2d+4}{d+1} y_0 + \frac{d+3}{d+1}y_0^2\right)~dy_0\, dy_1\,\ldots \,d y_{d-1}\\&=& 0\quad\quad (\text{since the inner integral is} ~0).
	\end{eqnarray*}
	This proves \eqref{orthbreak} and completes the proof of Theorem \ref{thm2}. \qed

	\subsection{Proof of Theorem \ref{Thm 2}}\label{thm2prf}
	To begin with, note that for $\bt \in A_{\bm h}$, we have 
	\begin{eqnarray}\label{eq6}
		&& \hat{u}(\bt)-f(\bt) \nonumber\\
		&\leq& \hat{f_{\bm h}^u}(\bt)- f(\bt) + \frac{\|\psi^u\|\left(\kappa_\alpha+\Gamma(2^d\prod_{i=1}^dh_i)\right)}{\langle 1 , \psi^u \rangle (n\prod_{i=1}^dh_i)^{1/2} }\nonumber\\
		&\leq& \frac{\langle f(\bt+\bh\star\cdot)-f(\bt),\psi^u(\cdot)\rangle}{\langle1,\psi^u\rangle} + \frac{\|\psi^u\|(\kappa_\alpha+2\Gamma(2^d(\prod_{i=1}^dh_i))+T(\psi^u))}{\langle 1 , \psi^u \rangle (n\prod_{i=1}^dh_i)^{1/2}}.
	\end{eqnarray}
	Here the last line follows from the inequality $$T(\psi^u) \geq \frac{\hat{f_{\bh}^u}(\bt)-\langle f(\bt+\bh\star\cdot),\psi^u(\cdot)\rangle/\langle1,\psi^u\rangle}{\|\psi\|\langle1,\psi^u\rangle^{-1}(n\prod_{i=1}^dh_i)^{-1/2}} - \Gamma(2^d\prod_{i=1}^d h_i).$$
	Now, if $f \in \mathbb{H}_{\beta,L}\cap \mathcal{F}_1$ (where $0<\beta\leq 1$) we have 
	\begin{eqnarray*}
		|f(\bt+\bh\star \bx)-f(\bt)|  \leq L\|\bh\star \bx\|^\beta .
	\end{eqnarray*}
	Hence, denoting $h := \max\{h_1,\ldots,h_d\}$, we have:
	\begin{equation}\label{sh1}
		\frac{\langle f(\bt+\bh\star\cdot)-f(\bt),\psi^u(\cdot)\rangle}{\langle1,\psi^u\rangle} \leq \frac{L\int_{[-1,1]^d} \|\bh \star \bx\|^\beta \psi^u(\bx) d\bx}{\langle 1,\psi^u\rangle} \leq \frac{Lh^\beta \int_{[-1,1]^d} \|\bx\|^\beta \psi^u(\bx) dx}{\langle 1,\psi^u\rangle} := K_1 h^\beta
	\end{equation}
	where $K_1 := L\int_{[-1,1]^d} \|\bx\|^\beta \psi^u(\bx) dx/\langle 1,\psi^u\rangle$.
	On the other hand, if $f\in \mathbb{H}_{\beta,L}\cap \mathcal{F}_2$ (where $1<\beta\leq 2$) then defining $g(\bm x) := f(\bt + \bh \star \bx)$, we have the following for some $\xi_{\bx}$ lying in the segment joining $\boldsymbol{0}$ and $\bx$:
	\begin{eqnarray}\label{sh2}
		\langle f(\bt+\bh\star\cdot)-f(\bt),\psi^u(\cdot)\rangle&=&\int_{[-1,1]^d} \bx^\top \nabla g(\xi_{\bx}) \psi^u(\bx) \,d\bx\nonumber\\
		&=& \int_{[-1,1]^d} \bx^\top \left(\nabla g(\xi_{\bx})-\nabla g(\boldsymbol 0)\right) \psi^u(\bx) \,d\bx\nonumber\\
		&\leq & \int_{[-1,1]^d} \|\bx\| \|\nabla g(\xi_{\bx}) - \nabla g(\boldsymbol 0)\| \psi^u(\bx) \,d\bx\nonumber\\&=& \int_{[-1,1]^d} \| \bx\|  \|\bm h\star (\nabla f(\bt + \bh\star \xi_{\bx}) - \nabla f(\bt))\| \psi^u(\bx) \,d\bx\qquad\\
		&\leq & h \int_{[-1,1]^d} \| \bx\|  \|\nabla f(\bt + \bh\star \xi_{\bx}) - \nabla f(\bt)\| \psi^u(\bx) \,d\bx\nonumber\\
		&\leq& Lh^\beta \int_{[-1,1]^d} \|\bx\|^\beta \psi^u(\bx) \,d\bx~.\nonumber
	\end{eqnarray}
	Note that in the second equality we have used the fact that both the integrals $\int_{[-1,1]^d} x_i \psi^u(\bm x)$ and $\int_{[-1,1]^d} - x_i \psi^u(\bm x)$ are nonnegative for $1\le i\le d$, which follows from the bias condition \eqref{mainkercondition} (as the functions $f({\bm x}) = \pm x_i$ are convex, for $i=1,\ldots, d$) and hence $\int_{[-1,1]^d} x_i \psi^u(\bm x) = 0$. The last inequality follows from the fact that $f\in \mathbb{H}_{\beta,L}$, and hence, $$\|\nabla f(\bt + \bh\star \xi_{\bx}) - \nabla f(\bt)\| \le L \|\bm h \star \xi_{\bm x}\|^{\beta-1} \le L h^{\beta-1}.$$ Hence, in this case also, we have:
	$$\frac{\langle f(\bt+\bh\star\cdot)-f(\bt),\psi^u(\cdot)\rangle}{\langle1,\psi^u\rangle} \leq \frac{Lh^\beta \int_{[-1,1]^d} \|\bx\|^\beta \psi^u(\bx) \, dx}{\langle 1,\psi^u\rangle} = K_1 h^\beta.$$
	Therefore, \eqref{eq6} tells us that as long as ${\bm t} \in A_{\bm h}$ we have 
	\begin{equation}\label{indilt}
		\hat{u}(\bt)-f(\bt) \leq K_1 h^\beta + \frac{\|\psi^u\|(\kappa_\alpha+2\Gamma(2^d(\prod_{i=1}^dh_i))+T(\psi^u))}{\langle 1 , \psi^u \rangle (n\prod_{i=1}^dh_i)^{1/2} }. 
	\end{equation}
	Putting $h_1=h_2=\ldots=h_d=\varepsilon_n:=(\log(en)/n)^{1/(2\beta+d)}$ in \eqref{indilt}, we get
	$K_1 h^\beta = K_1 \varepsilon_n^\beta$ and
	\begin{eqnarray*}
		\frac{\|\psi^u\|(\kappa_\alpha+2\Gamma(2^d(\prod_{i=1}^dh_i))+T(\psi^u))}{\langle 1 , \psi^u \rangle (n\prod_{i=1}^dh_i)^{1/2} }
		&\leq& K_2 \frac{|\kappa_\alpha|+|T(\psi^u)| +2\sqrt{2+2\log n}}{n^{\beta/(2\beta+d)} \log(en)^{d/(4\beta+2d)}}\\
		& \leq & K_3\varepsilon_n^\beta \left(\frac{|\kappa_\alpha|+|T(\psi^u)|}{\log^{1/2}(en)}+1\right)
	\end{eqnarray*}
	for some constants $K_2$ and $K_3$ not depending on $n$.
	The above two equations tell us that as long as ${\bm t} \in A_{\varepsilon_n\boldsymbol{1}_d}$, we have 
	\begin{equation}\label{bd31}
		\hat{u}(\bt)-f(\bt) \leq K \varepsilon_n^\beta \left(1+ \frac{|\kappa_\alpha|+|T(\psi^u)|}{\log^{1/2}(en)}\right)   
	\end{equation}
	for some constant $K$ not depending on $n$.
	
	The steps for bounding $f(\bm t) - \hat{\ell}(\bm t)$ are similar, but we point out some differences. First, we have:
	$$f(\bm t) - \hat{\ell}(\bm t) \le \frac{\langle f(\bt) - f(\bt+\bh\star\cdot),\psi^\ell(\cdot)\rangle}{\langle1,\psi^\ell\rangle} + \frac{\|\psi^\ell\|(\kappa_\alpha+2\Gamma(2^d(\prod_{i=1}^dh_i))+T(-\psi^\ell))}{\langle 1 , \psi^\ell \rangle (n\prod_{i=1}^dh_i)^{1/2} },$$ which gives:
	$$f(\bm t) - \hat{\ell}(\bm t) \le K_2 h^\beta +  \frac{\|\psi^\ell\|(\kappa_\alpha+2\Gamma(2^d(\prod_{i=1}^dh_i))+T(-\psi^\ell))}{\langle 1 , \psi^\ell \rangle (n\prod_{i=1}^dh_i)^{1/2} }$$
	where $K_2 := L\int_{[-1,1]^d} \|\bx\|^\beta |\psi^\ell(\bx)| \,d{\bm x}/\langle 1,\psi^\ell\rangle$. Hence, for some constant $K'$ not depending on $n$, we have:
	\begin{equation}\label{bd32}
		f(\bm t) - \hat{\ell}(\bm t) \le K'  \varepsilon_n^\beta \left(1+ \frac{|\kappa_\alpha|+|T(-\psi^\ell)|}{\log^{1/2}(en)}\right).
	\end{equation}
	Theorem \ref{Thm 2} now follows by adding \eqref{bd31} and \eqref{bd32}. \qed
	
	\subsection{Sketch of a proof of Theorem \ref{locad}}\label{sec:Loc-Smooth}
	For proving Theorem \ref{locad}, first one needs to observe that for $\bm h = \varepsilon \boldsymbol{1}_d$, we have $\|\bm h \star \bm x\|_\infty  = \varepsilon \|\bm x\|_\infty \le \varepsilon$, and hence, $\bm t_0 + \bm h \star \bm x \in {B}_\infty(\bm t_0, \varepsilon)$. The rest of the proof follows exactly as the proof of Theorem \ref{Thm 2}, on noting that one only needs the H\"older smoothness assumption for bounding the terms $|f(\bt_0 + \bm h\star \bm x) - f(\bt_0)|$ and $\|\nabla f(\bt_0 + \bm h \star \xi_{\bm x}) - \nabla f(\bt_0)\|$ for some $\xi_{\bm x}$ lying in the segment joining $\boldsymbol{0}$ and $\bm x$ (for the classes $\mathcal{F}_1$ and $\mathcal{F}_2$ respectively). Consequently, it is enough to have H\"older smoothness on ${B}_\infty(\bm t_0, \varepsilon)$ only.

	\subsection{Proof of Theorem \ref{idimth}}\label{pridim}
	It follows from \eqref{eq6} and \eqref{indilt} that for $\bm t \in A_{\boldsymbol{\varepsilon}_n,i_1,\ldots,i_k}$, we have:
	\begin{eqnarray*}\label{indilt2}
		\hat{u}(\bt)-f(\bt) &\leq& K_1 \varepsilon_n^\beta + \frac{\|\psi^u\|(\kappa_\alpha+2\Gamma(2^d\varepsilon_n^k \varepsilon^{d-k})+T(\psi^u))}{\langle 1 , \psi^u \rangle (n\varepsilon_n^k \varepsilon^{d-k})^{1/2} }\nonumber\\&=& K_1 \varepsilon^\beta\rho_{n,k} + \frac{\|\psi^u\|(\kappa_\alpha+2\Gamma(2^d\varepsilon^d (\log(en)/n)^{k/(2\beta+k)})+T(\psi^u))}{\langle 1 , \psi^u \rangle (n\varepsilon^d (\log(en)/n)^{k/(2\beta+k)})^{1/2}}\nonumber\\&\le& \Delta_1 \rho_{n,k} \left(1+\frac{|\kappa_{\alpha}|+|T(\psi^u)|}{(\log(en))^{1/2}}\right)
	\end{eqnarray*}
	for some constants $K_1$ and $\Delta_1>0$. The rest of the proof follows the idea of the proof of Theorem \ref{Thm 2}. The only modifications are in \eqref{sh1} and \eqref{sh2}, where one now uses the fact that the function $f$ only depends on $k$ coordinates, and hence, the vector $\bh$ can now be replaced by its restriction on the $i_1^{\mathrm{th}},\ldots,i_k^{\mathrm{th}}$ coordinates.

	% \begin{eqnarray*}
	% &&\p_f\left(\hat{\ell}(\bm t) \le f(\bm t) \le \hat{u}(\bm t)~\text{for all}~\bm t\in [0,1]^d\right)\\&=& \p_f\left(\bigcap_{\substack{\bt\in [0,1]^d\\\bh \in I: ~\bt \in A_{\bh}}}\left\{\hat{f^{\ell}_h}(\bt) - \frac{\|\psi^\ell\| \left(\kappa_\alpha + \Gamma(2^d \prod_{i=1}^d h_i)\right)}{\langle 1,\psi^l \rangle (n\prod_{i=1}^dh_i)^{1/2}} \le f(\bt) \le \hat{f^{u}_h}(\bt) + \frac{\|\psi^u\| \left(\kappa_\alpha + \Gamma(2^d \prod_{i=1}^d h_i)\right)}{\langle 1,\psi^u \rangle (n\prod_{i=1}^dh_i)^{1/2}}\right\}\right)\\&=& \p_f\left(\bigcap_{\substack{\bt\in [0,1]^d\\\bh \in I: ~\bt \in A_{\bh}}}\left\{\frac{\hat{f}_h^\ell(\bt) - f(\bt)}{\mathrm{Var}^{1/2}(\hat{f}_h^\ell(\bt)) } - \Gamma(2^d \prod_{i=1}^d h_i) \le \kappa_\alpha~,~ \frac{\hat{f}_h^u(\bt) - f(\bt)}{\mathrm{Var}^{1/2}(\hat{f}_h^u(\bt)) } - \Gamma(2^d \prod_{i=1}^d h_i) \ge \kappa_\alpha\right\}\right)\\&\ge& 
	% \end{eqnarray*}

	\subsection{Proof of Theorem \ref{opconstant}}\label{optl}
	
	\noindent (a)~We prove only the bound for $\|f- {\ell}\|_{U}$ as the other case can be handled similarly. Thus, we will show that for any level $1-\alpha$ confidence band $(\ell,u)$ with guaranteed coverage probability for the class $\mathcal{F}_1$, and any $0<\gamma<1$, we have  $$\liminf_{n \to \infty}\p_{f}\left(\|f- \ell\|_{U} \geq \gamma \Delta^{(\ell)} L_1^{\frac{d}{2+d}}[\bm t_0] \rho_n\right) \geq 1-\alpha.$$ For notational simplicity, we will abbreviate $\psi^\ell$ by $\psi$. By our assumption, $f$ is continuously differentiable on an open neighborhood $U$ of $\bm t_0 \in (0,1)^d$ such that $$L_1[\bm t_0] \equiv L_1[f,\bm t_0]:= \left[\prod_{i=1}^d \frac{\partial}{\partial x_i} f(\bm x) \big|_{\bm x=\bm t_0}\right]^{1/d} > 0.$$ Let us define, for $i=1,\ldots, d$,  $$M_i^\star:=\frac{\partial}{\partial x_i}f(\bm x)|_{\bm x=\bm t_0}.$$  Without loss of generality, let us assume that $M_1^\star \leq M_2^\star \leq \ldots \leq M_d^\star.$  Since $\gamma<1$, we can find $\epsilon>0$ and $\gamma^*<1$ (depending on $\gamma$) such that:
	\begin{equation}\label{eq:L_1-t0}
		\gamma  L_1^{d/(2+d)}[\bm t_0]=\gamma^\star( L_1[\bm t_0]-\epsilon)^{d/(2+d)}.
	\end{equation} 
	Also since $f$ is continuously differentiable on $U$, we can find $\bm h_0\in [0,1]^d$ and  $\epsilon^\star >0$ small enough such that:  \begin{itemize} 
		\item[(i)] $B_\infty(\bm t_0,\bm h_0) \subset U$, 
		
		\item[(ii)] $\left[\prod_{i=1}^d (M_i^\star-\epsilon^\star)\right]^{1/d} \geq {L}_1[\bm t_0]-\epsilon$,
		
		\item[(iii)] for all $\bm x \in B_\infty(\bm t_0,\bm h_0)$ we have $$\frac{\partial}{\partial x_i}f(\bm x) \geq M_i^\star- \epsilon^\star>0, \quad \quad \mbox{for all } \;\;i=1,2,\ldots,d.$$  
	\end{itemize}
	Suppose that $\bm h =(h_1,h_2,\ldots,h_d)$ is such that $h_1 \in (0,1/2]$ and $h_i := h_1\times \left(\frac{M_1^\star-\epsilon^\star}{M_i^\star-\epsilon^\star}\right)$, for $i=2,\ldots, d$. Let us define a set of grid points $G$ for bandwidth $\bm h$ as  $$G :=\left\{\bm t=(t_1,\ldots,t_d): t_i=t_{0i}+h_i(2k_i) \mbox{ for some integer } k_i  , B_\infty(\bm t,\bm h) \subset B_{\infty}(\bm t_0,\bm h_0) \right\}$$
	where $\bm t_0=(t_{01},\ldots,t_{0d})$. For $\bm t\in G$, let us define the following ``perturbation'' functions: $$f_{\bm t}:=f-h_1(M_1^\star-\epsilon^\star)\psi_{\bm t,\bm h},$$ where $\psi_{\bm t,\bm h}$ is defined in~\eqref{kerest}. We will now show that for every $\bm t \in G$, $f_{\bm t} \in \mathcal{F}_1$.
	
	\begin{lem}\label{ftinF1}
		$f_{\bm t} \in \mathcal{F}_1$ \textrm{ for all } $\bm t \in G.$
	\end{lem}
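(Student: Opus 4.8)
The plan is to show directly that $f_{\bm t}$ is nondecreasing along every line parallel to a coordinate axis; equivalently, fixing $i\in\{1,\dots,d\}$ and all coordinates except the $i$-th, the one-variable function $s\mapsto f_{\bm t}(x_1,\dots,x_{i-1},s,x_{i+1},\dots,x_d)$ should be nondecreasing on $[0,1]$. The point of departure is the observation that the shifted, rescaled kernel $\psi_{\bm t,\bm h}$ (see \eqref{kerest}) is supported in the ``lower corner box'' $\prod_{k}[t_k-h_k,t_k]\subseteq B_\infty(\bm t,\bm h)\subseteq B_\infty(\bm t_0,\bm h_0)\subseteq U$: writing $R:=\{\bm x:\ x_k\le t_k\ \forall k,\ \sum_{k}(x_k-t_k)/h_k\ge -1\}$, one checks from \eqref{kerest} and the definition of $\psi_1^\ell$ in \eqref{psi1defnt} that $\psi_{\bm t,\bm h}(\bm x)=1+\sum_{k}(x_k-t_k)/h_k$ on $R$ and $\psi_{\bm t,\bm h}\equiv 0$ off $R$. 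Thus $f_{\bm t}$ coincides with the monotone function $f$ off $R$ and is affine-plus-$f$ on $R$, and the argument reduces to analysing these two pieces and checking that they glue correctly along each such line.

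First I would record that the bandwidth vector $\bm h$ lies in $I=(0,1/2]^d$: since the $M_k^\star$ were ordered so that $M_1^\star\le\cdots\le M_d^\star$, the definition $h_i=h_1(M_1^\star-\epsilon^\star)/(M_i^\star-\epsilon^\star)$ gives $0<h_i\le h_1\le 1/2$ (positivity of the denominators is part (iii)). Off $R$ we have $f_{\bm t}=f\in\mathcal{F}_1$, so $f_{\bm t}$ is nondecreasing along the portion of the line lying off $R$. On the interior of $R$ the kernel is affine with $\partial_{x_i}\psi_{\bm t,\bm h}\equiv h_i^{-1}$, so
\[
\partial_{x_i}f_{\bm t}(\bm x)=\partial_{x_i}f(\bm x)-h_1(M_1^\star-\epsilon^\star)h_i^{-1}=\partial_{x_i}f(\bm x)-(M_i^\star-\epsilon^\star)\ \ge\ 0,
\]
where the cancellation is precisely the reason $h_i$ is defined as it is, and the inequality is property (iii) applied at $\bm x\in R\subseteq B_\infty(\bm t_0,\bm h_0)$. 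Hence $f_{\bm t}$ is nondecreasing along the portion of the line inside $R$ as well.

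It then remains to glue. For a line $L$ parallel to $\bm e_i$, the set $L\cap R$ is an interval (a line meets the convex set $R$ in an interval), whose right endpoint lies on the face $\{x_i=t_i\}$ and whose left endpoint, when interior to $[0,1]^d$, lies on the ``hypotenuse'' face $\{\sum_k(x_k-t_k)/h_k=-1\}$. On that hypotenuse $\psi_{\bm t,\bm h}=0$, matching the value $0$ taken just off $R$, so $f_{\bm t}$ is continuous across the left endpoint. On $\{x_i=t_i\}\cap R$ one has $\psi_{\bm t,\bm h}=1+\sum_{k\ne i}(x_k-t_k)/h_k\ge 0$ (again because $\bm x\in R$), while immediately beyond (at $x_i>t_i$) $\psi_{\bm t,\bm h}$ vanishes; since $h_1(M_1^\star-\epsilon^\star)>0$ and $f$ is continuous, $f_{\bm t}=f-h_1(M_1^\star-\epsilon^\star)\psi_{\bm t,\bm h}$ therefore has a nonnegative (upward) jump across the right endpoint. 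Combining the four facts---$f_{\bm t}$ nondecreasing on $L\setminus R$, nondecreasing on $L\cap R$, continuous at the left endpoint of $L\cap R$, and with an upward jump at its right endpoint---shows that $f_{\bm t}$ is nondecreasing along $L$. Since $i$ and $L$ were arbitrary, $f_{\bm t}\in\mathcal{F}_1$.

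The one genuinely delicate point is that $\psi_1^\ell$ is \emph{discontinuous} across the coordinate faces $\{x_i=0\}$ of its support (positive just inside the negative orthant, zero just outside), so $f_{\bm t}$ inherits jumps across the faces $\{x_i=t_i\}$; the argument goes through precisely because those jumps point \emph{upward}, which in turn is why the lower kernel $\psi_1^\ell$ is chosen to be supported on the negative orthant and why the perturbation is \emph{subtracted} from $f$ rather than added. Everything else---checking $\bm h\in I$, the sign bookkeeping, and the affineness of $\psi_{\bm t,\bm h}$ on $R$---is routine.
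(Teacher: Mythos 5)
Your proof is correct, and it takes a genuinely different route from the paper's. The paper fixes an arbitrary coordinate-wise ordered pair $\bm x\le\bm y$ and splits into four cases according to which of $\bm x,\bm y$ lies in the truncated-simplex support $T_{\bm t,\bm h}$, using the mean value theorem in the all-inside case and an interpolation trick (projecting onto the hyperplane $\sum(u_i-t_i)/h_i=-1$) to handle the mixed case where $\bm x$ is outside and $\bm y$ inside. You instead reduce to one-dimensional monotonicity along each coordinate line (valid by telescoping), verify a pointwise nonnegative partial derivative on the interior of the support region using exactly the cancellation $h_1(M_1^\star-\epsilon^\star)/h_i = M_i^\star-\epsilon^\star$ that the paper exploits inside its MVT step, and then check that the two jump discontinuities of $\psi_{\bm t,\bm h}$ along the line --- continuity at the simplex face $\sum(x_k-t_k)/h_k=-1$ and a nonnegative upward jump of $f_{\bm t}$ across $\{x_i=t_i\}$ --- glue the two monotone pieces correctly. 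Both approaches rest on the same two facts (the differential inequality on the affine piece and the sign of the kernel on its support faces), but your version makes the role of the kernel's negative-orthant support and the direction of the perturbation more transparent, while the paper's case analysis avoids the jump bookkeeping and works directly with arbitrary $\bm x\le\bm y$ without the coordinate-line reduction. One small point worth spelling out in a final write-up: you should note explicitly (as you implicitly use) that a coordinate line through $R$ always enters through the hyperplane face and exits through $\{x_i=t_i\}$, because the quantity $\sum_k(x_k-t_k)/h_k$ is strictly increasing in $x_i$, so the interval $L\cap R$ has the stated endpoints; and that the case $L\cap R=\varnothing$ is trivial.
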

	\begin{proof}
		Fix $\bm t = (t_1,\ldots, t_d) \in G$. Suppose that $\bm x \le \bm y$ (coordinate-wise) and $\bm x,\bm y\in T_{\bm t,\bm h} $ where 
		$$T_{\bm t,\bm h} := \left\{\bm u=(u_1,\ldots,u_d) \in [0,1]^d: u_i\leq t_i \mbox{ for all } i \mbox{ and } \sum_{i=1}^d \left(\frac{u_i-t_i}{h_i}\right) \geq -1\right\}. $$Then, for some $\boldsymbol{\xi} \in [\bm x, \bm y]\subset B_{\infty}(\bm t_0,\bm h_0)$ (here $[\bm x, \bm y]$ denotes the line segment joining $\bm x$ and $\bm y$), by the mean value theorem (and (iii) above),
		\begin{equation}\label{ds11}
			f(\bm y)-f(\bm x) = \triangledown f(\boldsymbol{\xi})^\top (\bm y-\bm x) \; \geq \;  \sum_{i=1}^d (M_i^\star-\epsilon^\star)(y_i-x_i). 
		\end{equation}
		Then, using the formula for $\psi_{\bm t,\bm h}$,
		\begin{equation}\label{ds22}
			-\psi_{\bm t,\bm h}(\bm y)+\psi_{\bm t,\bm h}(\bm x) = -\sum_{i=1}^d \Big(\frac{y_i-x_i}{h_i}\Big) = -\frac{1}{h_1(M_1^\star-\epsilon^*)} \sum_{i=1}^d (M_i^\star-\epsilon^\star)(y_i-x_i).
		\end{equation}
		It now follows from \eqref{ds11} and \eqref{ds22} that:
		$$f_{\bm t}(\bm y) - f_{\bm t}(\bm x) \ge \sum_{i=1}^d (M_i^\star-\epsilon^\star)(y_i-x_i) -h_1(M_1^\star - \epsilon^\star) \left[\frac{1}{h_1(M_1^\star-\epsilon^*)} \sum_{i=1}^d (M_i^\star-\epsilon^\star)(y_i-x_i)\right] = 0$$
		thereby yielding $f_{\bm t}(\bm y) \geq f_{\bm t}(\bm x)$. %Not let us look at the case where $x$ or $y \notin T_\infty(t,h)$. Note that if $z \notin T_{t,h}$ then $\psi_{t,h}(z)-0.$ 

		Let us now look at the case $\bm x\notin T_{\bm t,\bm h}$, $\bm y \in T_{\bm t,\bm h}$ and $\bm x\leq \bm y.$ Define $$a := 1+\sum_{i=1}^d \left(\frac{x_i-t_i}{h_i}\right) \qquad \mbox{and } \qquad b :=1+ \sum_{i=1}^d \left(\frac{y_i-t_i}{h_i}\right).$$ Since $\bm y \in T_{{\bm t},{\bm h}}$, we have $\bm x \leq \bm y \leq \bm t$ (coordinate-wise). Hence as $\bm x\notin T_{\bm t,\bm h}$, $a<0$ and as $\bm y \in T_{\bm t,\bm h}$,  $b\ge0$. Define $\bm z :=\alpha \bm x  + (1-\alpha) \bm y$ where $\alpha := b/(b-a) \in (0,1).$ Note that $\sum_{i=1}^d (z_i-t_i)/h_i = -1$ and $\bm x \leq \bm z \leq \bm y \leq \bm t$ which implies that $\bm z \in T_{\bm t,\bm h}$. Hence, $$f_{\bm t}(\bm y) \geq f_{\bm t}(\bm z) = f(\bm z) \geq f(\bm x) = f_t(\bm x),$$ 
		thereby yielding $f_{\bm t}(\bm y) \geq f_{\bm t}(\bm x)$. Here, the first inequality follows from the fact that $\bm z,\bm y \in T_{\bm t,\bm h}$, the third inequality follows from monotonicity of $f$ and the second and fourth equality follows from the fact that $\psi_{\bm t,\bm h}(\bm z)=\psi_{\bm t,\bm h}(\bm x)=0$ (cf.~\eqref{psi1defnt}). 
		
		Now let us look into the case where  $\bm x\in T_{\bm t,\bm h}$, $\bm y \notin T_{\bm t,\bm h}$ and $\bm x\leq \bm y.$ In this case $$f_{\bm t}(\bm y)=f(\bm y)\geq f(\bm x) \geq f(\bm x) - h_1(M_1^\star-\epsilon^\star)\psi_{\bm t,\bm h}(\bm x) = f_{\bm t}(\bm x).$$
		The only case left is when $\bm x\notin T_{\bm t,\bm h}$, $\bm y \notin T_{\bm t,\bm h}$ and $\bm x\leq \bm y.$ In this case $\psi_{\bm t,\bm h}(\bm x)=\psi_{\bm t,\bm h}(\bm y)=0$, hence the monotonicity of $f_{\bm t}$ directly follows from the monotonicity of $f$. This completes the proof of Lemma \ref{ftinF1}.
	\end{proof}
	
	\noindent Now we continue with the proof of Theorem~\ref{opconstant}. Let $[\ell,u]$ be any honest confidence band for the class $\mathcal{F}_1$. Let  
	$A$ be the event that $\{\ell(\bm x) \leq f_{\bm t}(\bm x) \mbox{   for all } \bm x\in[0,1]^d,  \mbox{   for some  } \bm t \in G\}.$
	Now, since $[\ell,u]$ is a confidence band for all $f\in \mathcal{F}_1$, and since $f_{\bm t} \in \mathcal{F}_1$, we have  $$\p_{f_{\bm t}}(A)\geq 1-\alpha \quad \mbox{   for all } \quad \bm t\in G.$$ Hence we have
	\begin{equation}\label{opt 6.27}
		\p_{f}\left(\|f- \ell\|_U \geq h_1(M_1^\star-\epsilon^\star)\right) \geq \p_{f}(A)\geq 1-\alpha-\min_{\bm t\in G}\left( \p_{f_{\bm t}}(A)-\p_{f}(A)\right).
	\end{equation}
	Here the first inequality follows from the fact that if $A$ happens then there exists $\bm t \in G\subset U$ such that $\ell \leq f_{\bm t}$, thereby yielding (as $\psi_{\bm t,\bm h}(\bm t) = 1$): $$\ell(\bm t) \leq f_{\bm t}(\bm t)=f(\bm t) - h_1(M_1^\star-\epsilon^\star).$$
	Hence it is enough to bound $\min_{\bm t\in G}\left( \p_{f_{\bm t}}(A)-\p_{f}(A)\right)$. Note that
	\begin{eqnarray}\label{optconsteq1}\min_{\bm t\in G} \p_{f_{\bm t}}(A)-\p_{f}(A) &\leq & |G|^{-1} \sum_{\bm t \in G} \left(\p_{f_{\bm t}}(A) - \p_{f}(A) \right) \nonumber \\ &=& |G|^{-1} \sum_{\bm t \in G}  \E_{f}\left( \left(\frac{d\p_{f_{\bm t}}}{d\p_{f}}(Y)-1\right) \mathbb{I}_A(Y)\right) \nonumber \\ &\leq &\E_{f}\Big||G|^{-1} \sum_{\bm t \in G} \left(\frac{d\p_{f_{\bm t}}}{d\p_{f}}(Y)-1\right) \Big|. \end{eqnarray}
	
	Now by Cameron-Martin-Girsanov's theorem, we have $$\log \frac{d\p_{f_{\bm t}}}{d\p_{f}}(Y)=n^{1/2}h_1(M_1^\star-\epsilon^\star) \sqrt{\Pi_{i=1}^d h_i} \|\psi\|X_{\bm t} - n(M_1^\star-\epsilon^\star)^2h_1^2(\Pi_{i=1}^d h_i) \|\psi\|^2/2$$
	where $$X_{\bm t}=(\Pi_{i=1}^n h_i)^{-1/2}\|\psi\|^{-1} \int \psi_{\bm t,\bm h} \,dW$$
	with $W$ being the standard Brownian sheet on $[0,1]^d$.  
	Note that here $X_{\bm t}$ follows a standard normal distribution and for $\bm t\neq \bm t^\prime \in G$, $X_{\bm t}$ and $X_{\bm t}^\prime$ are independent. Now let $$w_n:=n^{1/2}h_1(M_1^\star-\epsilon^\star) \sqrt{\Pi_{i=1}^d h_i} \|\psi\|.$$
	
	At this point, we need the following lemma (stated and proved in Lemma 6.2 in \cite{Dumbgen-Spokoiny-2001}).
	\begin{lem}\label{normal lemma}
		Let $\{Z_n\}_{n\ge 1}$ be a sequence of independent standard normal variables. If $v_m := (1-\epsilon_m)\sqrt{2\log m}$ with $\lim_{m \to \infty} \epsilon_m =0$ and $\lim_{m \to \infty} \epsilon_m \sqrt{\log m}=\infty$, then we have $$\lim_{m \to \infty} \E\left|\frac{1}{m}\sum_{i=1}^m \exp\left(v_mZ_i-\frac{v_m^2}{2}\right) - 1\right|=0.
		$$
	\end{lem}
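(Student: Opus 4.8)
The plan is to use a classical truncation argument. Writing $\xi_i := \exp\!\big(v_m Z_i - v_m^2/2\big)$, each $\xi_i$ is a mean-one random variable (it is the exponential/Girsanov weight of a standard Gaussian), so the random average $S_m := m^{-1}\sum_{i=1}^m \xi_i$ satisfies $\e S_m = 1$, and we must show $\e|S_m - 1|\to 0$. A naive $L^2$ bound fails: $\e \xi_i^2 = e^{v_m^2} = m^{2(1-\epsilon_m)^2}$, which exceeds $m$ once $\epsilon_m$ is small, so $\var S_m = m^{-1}(e^{v_m^2}-1)$ does not vanish. To remedy this I would truncate according to the size of $Z_i$: fix a threshold $a_m$ (calibrated below) and split $S_m = S_m^{(1)} + S_m^{(2)}$, where $S_m^{(1)} := m^{-1}\sum_i \xi_i \mathbf{1}\{Z_i \le a_m\}$ and $S_m^{(2)} := m^{-1}\sum_i \xi_i \mathbf{1}\{Z_i > a_m\}$.

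Since $\e S_m = 1$ we have $S_m - 1 = (S_m^{(1)} - \e S_m^{(1)}) + (S_m^{(2)} - \e S_m^{(2)})$, so by the triangle inequality and Cauchy--Schwarz,
\[
\e|S_m - 1| \;\le\; \big(\var S_m^{(1)}\big)^{1/2} + 2\,\e S_m^{(2)}.
\]
Both summands reduce to elementary Gaussian computations via completing the square. For the tail part, $\e S_m^{(2)} = \e\big[\exp(v_m Z - v_m^2/2)\,\mathbf{1}\{Z>a_m\}\big] = \Phi(v_m - a_m)$, which tends to $0$ as soon as $a_m - v_m \to \infty$. For the truncated part, independence gives $\var S_m^{(1)} \le m^{-1}\e\big[\xi_1^2 \mathbf{1}\{Z_1 \le a_m\}\big] = m^{-1} e^{v_m^2}\Phi(a_m - 2v_m)$, and using $\Phi(-x) \le e^{-x^2/2}$ for $x > 0$ this is at most $m^{-1}\exp\!\big(v_m^2 - (2v_m - a_m)^2/2\big)$, which we need to be $o(1)$.

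The heart of the matter is the choice of $a_m$. I would take $a_m := v_m(1 + \epsilon_m/2)$. Then $a_m - v_m = \tfrac12 \epsilon_m v_m = \tfrac{\sqrt2}{2}\,\epsilon_m(1-\epsilon_m)\sqrt{\log m} \to \infty$ by hypothesis, so the tail term vanishes. On the other hand $2v_m - a_m = v_m(1 - \epsilon_m/2) > 0$ for large $m$, so substituting $v_m^2 = 2(1-\epsilon_m)^2\log m$ into the variance exponent and expanding in powers of $\epsilon_m$ gives
\[
v_m^2 - \tfrac12(2v_m - a_m)^2 - \log m \;=\; -\epsilon_m\big(1 + O(\epsilon_m)\big)\log m,
\]
which tends to $-\infty$ because the assumptions $\epsilon_m \to 0$ and $\epsilon_m\sqrt{\log m}\to\infty$ together force $\epsilon_m\log m\to\infty$. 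Hence $\var S_m^{(1)}\to 0$ and $\e S_m^{(2)}\to 0$, proving the lemma.

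The one delicate point --- and the only place both hypotheses on $\epsilon_m$ are used --- is this simultaneous calibration of $a_m$. It must lie strictly above the centre $v_m$ of the exponentially tilted law of $Z_i$ (so that $a_m - v_m \to \infty$, making the tail negligible), yet close enough to $v_m$ that the truncated second moment stays $o(m)$ (so that $2v_m - a_m$ remains comparable to $v_m$, i.e. $a_m - v_m = o(\sqrt{\log m})$). The admissible window for $a_m - v_m$ is therefore ``something diverging, but still $o(\sqrt{\log m})$'', and $\epsilon_m\sqrt{\log m}\to\infty$ with $\epsilon_m\to 0$ is precisely what makes this window nonempty, with $a_m = v_m(1+\epsilon_m/2)$ one valid choice. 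Everything else is a routine Gaussian tail estimate; this reproduces the argument of Lemma~6.2 in \cite{Dumbgen-Spokoiny-2001}.
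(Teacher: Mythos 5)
Your truncation argument is correct: the split at $a_m = v_m(1 + \epsilon_m/2)$, the exact Gaussian computations $\e S_m^{(2)} = \Phi(v_m - a_m)$ and $\e[\xi_1^2 \mathbf{1}\{Z_1 \le a_m\}] = e^{v_m^2}\Phi(a_m - 2v_m)$, and the verification that $a_m - v_m \to \infty$ while $v_m^2 - \tfrac12(2v_m - a_m)^2 - \log m = -\epsilon_m(1 + O(\epsilon_m))\log m \to -\infty$ all check. The paper itself supplies no proof and simply cites Lemma 6.2 of D\"umbgen and Spokoiny (2001); your argument is essentially the one given there, as you yourself note.
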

	Define $\varepsilon_n := 1-(w_n/\sqrt{2\log |G|})$. If $\varepsilon_n \rightarrow 0$ and $\varepsilon_n \sqrt{\log |G|} \rightarrow \infty$ are satisfied, then by Lemma \ref{normal lemma} and \eqref{optconsteq1}, we have the following as $|G|\rightarrow \infty$: 
	\begin{equation}\label{minpft}
		\min_{t\in G} \p_{f_{\bm t}}(A) - \p_f(A) \rightarrow 0.
	\end{equation}
	Now let us choose
	\begin{equation}\label{eq:h_1}
		h_1 := (1-\epsilon_n)c\rho_n \quad \mbox{where} \quad \rho_n :=(\log(en)/n)^{1/(2+d)},
	\end{equation}
	with $\epsilon_n \to 0$ and $\epsilon_n\sqrt{\log n} \to \infty$ and $c$ is a constant to be chosen later. This implies that 
	$$\sqrt{2\log |G|}=(1- o(1))\sqrt{\frac{2d}{d+2}\log n}$$ 
	and for large $n$,
	$\sqrt{2\log |G|}<\sqrt{\frac{2d}{d+2}\log n}$.
	Hence,
	$$w_n=(1-\epsilon_n)^{(2+d)/2} c^{(2+d)/2} \|\psi^\ell\| \frac{(M_1^\star-\epsilon^\star)^{(2+d)/2}}{\prod_{i=1}^d (M_i^\star-\epsilon^\star)^{1/2}} \sqrt{\log(en)}.$$
	Let us now put:
	\begin{equation}\label{eq:c}
		c :=\frac{\prod_{i=1}^d (M_i^\star-\epsilon^\star)^{1/(2+d)}}{(M_1^\star-\epsilon^\star)} \left[\frac{(d+2)\|\psi^\ell\|^2}{2d}\right]^{-1/({2+d})}
	\end{equation}
	whence we have
	\begin{eqnarray*}
		\frac{w_n}{\sqrt{2 \log |G|}} & \sim &(1-\epsilon_n)^{(2+d)/2} c^{(2+d)/2} \|\psi^\ell\| \frac{(M_1^\star-\epsilon^\star)^{(2+d)/2}}{\Pi_{i=1}^d (M_i^\star-\epsilon^\star)^{1/2}} \sqrt{\frac{d+2}{2d}}\\
		&=& (1-\epsilon_n)^{(2+d)/2} \to 1 \mbox{  as  } n\to \infty.% \frac{\Pi_{i=1}^d (M_i^\star-\epsilon^\star)^{1/2}}{(M_1^\star - \epsilon^\star)^{(2+d)/2}} \sqrt{(d+2)\norm{\psi}^2/2d} \norm{\psi} \frac{(M_1^\star-\epsilon^\star)^{(2+d)/2}}{\prod_{i=1}^d (M_i^\star-\epsilon^\star)^{1/2}} \sqrt{\frac{d+2}{2d}}\\
	\end{eqnarray*}
	Also note that for large $n$, $(1-w_n/\sqrt{2\log |G|}) >0$ as  $\sqrt{2\log |G|}<\sqrt{\frac{2d}{d+2}\log n}$. Further, \begin{eqnarray*}
		\sqrt{\log |G|} \left(1-\frac{w_n}{\sqrt{2\log |G|}}\right) &\sim& \sqrt{\frac{d}{2+d}} \sqrt{\log n} (1- (1-\epsilon_n)^{1+d/2})\\
		& = & \sqrt{\frac{d}{2+d}} \left(\frac{2+d}{2}+o(1)\right)\epsilon_n \sqrt{\log n} \to \infty \quad  \mbox{(by assumption)}. \end{eqnarray*}
	Recall the definition $\Delta^{(\ell)} := \left((d+2)\|\psi^\ell\|^2/2d\right)^{-1/(2+d)}$ from the statement of the theorem. Hence, using \eqref{opt 6.27},~\eqref{minpft},~\eqref{eq:h_1}, and~\eqref{eq:c} we have:
	\begin{eqnarray*}
		1-\alpha & \leq &\liminf_{n \to \infty} \p_{f}\left(\|f- \ell\|_{U} \geq h_1(M_1^\star-\epsilon^\star)\right)\\
		& = &\liminf_{n \to \infty} \p_{f}\left(\|f- \ell\|_{U} \geq (1-\epsilon_n)\rho_n \Delta^{(\ell)} \Pi_{i=1}^d (M_i^\star-\epsilon^\star)^{1/(2+d)} \right)\\
		& \leq  & \liminf_{n \to \infty} \p_{f}\left(\|f- \ell\|_{U} \geq \gamma^\star \Pi_{i=1}^d (M_i^\star-\epsilon^\star)^{1/(2+d)}\rho_n \Delta^{(\ell)} \right) \qquad (\mbox{as } 1 - \epsilon_n \to 1 > \gamma^*)\\
		& \leq & \liminf_{n \to \infty} \p_{f}\left(\|f- \ell\|_{U} \geq \gamma^\star (L_1[\bm t_0]-\epsilon)^{d/(2+d)}\rho_n \Delta^{(\ell)}\right) \qquad (\mbox{by (ii) above})\\
		& =& \liminf_{n \to \infty} \p_{f}\left(\|f- \ell\|_{U} \geq \gamma  L_1^{d/(2+d)}[\bm t_0]\rho_n \Delta^{(\ell)}\right) \qquad (\mbox{by~\eqref{eq:L_1-t0}}).
	\end{eqnarray*}
	This completes the proof of part (a). \newline
	
	\noindent (b) We again restrict our attention to $(f- \hat{\ell})(\bm t_0)$. The other case can be done by a similar argument.
	For $i=1,2,\dots,d$, let us recall $$M_i^\star=\frac{\partial}{\partial x_i}f(\bm x)|_{\bm x= \bm t_0}.$$
	Fix $\epsilon>0$. As $f$ has continuous derivative on an open neighborhood $U$ of $\bm t_0$  we can find $\epsilon^\star>0$ and  a hyperrectangle $B_{\infty}(\bm t_0,\bm h_0)$ small enough such that 
	\begin{equation}\label{eq:Bd-partial-f}
		\sup_{\bm x \in B_{\infty}(\bm t_0,\bm h_0)} \frac{\partial}{\partial x_i}f(\bm x) \leq M_i^\star+\epsilon^\star
	\end{equation}
	and 
	\begin{equation}\label{eq:L_1-M_1}
		[\Pi_{i=1}^d (M_i^\star+\epsilon^\star)]^{1/d} \leq (1+\epsilon) {L}_1[\bm t_0].
	\end{equation}

	Recall that we have assumed without loss of generality $0< M_1^\star \leq M_2^\star \leq \ldots \leq M_d^\star$. Now let $\bm h=(h_1,\ldots,h_d)$ be such that $h_i:=\tilde{h} \times \frac{M_1^\star+\epsilon^\star}{M_i^\star+\epsilon^\star}$, where $\tilde h$ will be chosen later. Let \begin{equation}\label{eq:M}
		M:=\left[\Pi_{i=1}^d \frac{M_1^\star+\epsilon^\star}{M_i^\star+\epsilon^\star}\right]^{1/d}
	\end{equation}
	which implies that $\Pi_{i=1}^d h_i= \tilde{h}^d M^d$.
	Recall that $$ \hat{\ell}(\bm t_0)=\sup_{\bm h\in I : \bm t_0 \in A_{\bm h}} \left\{ \hat{f_{\bm h}}(\bm t_0) - \frac{\|\psi\|}{{\langle 1,\psi \rangle (n\Pi_{i=1}^dh_i)^{1/2}}} \left(\kappa_\alpha+\Gamma(2^d\Pi_{i=1}^dh_i)\right)\right\}$$ 
	and 
	\begin{eqnarray*}\hat{{f_{\bm h}}}(\bm t_0)& =  &\frac{1}{n^{1/2} (\Pi_{i=1}^dh_i) \langle 1,\psi\rangle} \int_{[0,1]^d} \psi_{\bm t_0,\bm h}(\bm x) \,dY(\bm x)\\
		&=&\frac{1}{\langle 1,\psi\rangle } \langle f(\bm t_0+\bm h\star \cdot),\psi(\cdot) \rangle + \frac{1}{n^{1/2} (\Pi_{i=1}^dh_i) \langle 1,\psi\rangle} \int_{B_\infty(\bm t_0,\bm h)} \psi_{\bm t_0,\bm h}(\bm x)\,dW(\bm x)
	\end{eqnarray*}
	where for $\bm h,\bm x\in\R^d$ we define $\bm h\star \bm x:= (h_1x_1,\ldots ,h_dx_d)$.

	Now, it follows from the definition of $\hat{\ell}({\bm t_0})$ that if $f(\bm t_0)-\hat{\ell}(\bm t_0) \geq (M_1^\star+\epsilon^\star) \tilde{h}$, then $$\hat{f}_{\bm h}(\bm t_0)-\frac{\|\psi\|\left(\kappa_{\alpha}+\Gamma(2^dM^d\tilde h^d)\right)}{n^{1/2}\tilde h^{d/2} M^{d/2}\langle 1,\psi \rangle} \leq f(\bm t_0)-(M_1^\star+\epsilon^\star)\tilde h, $$
	which can be rewritten as: 
	\begin{eqnarray}
		\frac{\int_{B_{\infty}(\bm t_0,\bm h)} \psi_{\bm t_0,\bm h}(\bm x)\, dW(\bm x)}{\|\psi\|\tilde h^{d/2} M^{d/2}} &\leq & -\frac{(n\tilde{h}^dM^d)^{1/2}}{\|\psi\|} \langle f(\bm t_0+\bm h\star\cdot) -f(\bm t_0)+(M_1^\star+\epsilon^\star)\tilde{h}, \psi(\cdot) \rangle \nonumber\\
		& & \qquad \qquad \qquad + \; \Gamma(2^dM^d\tilde{h}^d) + \kappa_\alpha.  \label{optconsteq19}
	\end{eqnarray}
	
	\begin{lem}\label{optlemma2}
		Suppose $\bm h\in(0,1/2]^d$ is such that $B_{\infty}(\bm t,\bm h) \subseteq B_{\infty}(\bm t_0,\bm h_0)$. Then  $$\langle f(\bm t+\bm h\star\cdot) -f(\bm t)+(M_1^\star+\epsilon^\star)\tilde{h}, \psi(\cdot) \rangle \geq (M_1^\star+\epsilon^\star)\tilde{h} \|\psi\|^2. $$
	\end{lem}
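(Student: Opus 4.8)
The plan is to reduce the asserted inequality to a pointwise lower bound on $f(\bm t+\bm h\star\bm x)-f(\bm t)$ coming from the mean value theorem, and then to exploit the explicit algebraic form of the kernel $\psi=\psi^\ell=\psi_1^\ell$. Recall from~\eqref{psi1defnt} that $\psi$ is nonnegative, vanishes outside $S:=\{\bm x\in(-\infty,0]^d:\sum_{i=1}^d x_i\ge -1\}$, and satisfies $\psi(\bm x)=1+\sum_{i=1}^d x_i$ on $S$; recall also that the bandwidths under consideration are chosen so that $h_i(M_i^\star+\epsilon^\star)=\tilde h\,(M_1^\star+\epsilon^\star)$ for every $i=1,\dots,d$.

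First I would fix $\bm x\in S$ and observe that, since $|x_i|\le 1$, the point $\bm t+\bm h\star\bm x$ lies in $B_\infty(\bm t,\bm h)\subseteq B_\infty(\bm t_0,\bm h_0)$; as $B_\infty(\bm t_0,\bm h_0)$ is convex and also contains $\bm t$, the whole segment joining $\bm t$ and $\bm t+\bm h\star\bm x$ stays inside it, where $f$ is continuously differentiable. The mean value theorem along this segment produces a point $\bm\xi$ with $f(\bm t)-f(\bm t+\bm h\star\bm x)=\sum_{i=1}^d \frac{\partial f}{\partial x_i}(\bm\xi)\,h_i(-x_i)$. Since $-x_i\ge 0$ and, by~\eqref{eq:Bd-partial-f}, $\frac{\partial f}{\partial x_i}(\bm\xi)\le M_i^\star+\epsilon^\star$, the choice of $\bm h$ gives $f(\bm t)-f(\bm t+\bm h\star\bm x)\le (M_1^\star+\epsilon^\star)\tilde h\sum_{i=1}^d(-x_i)$, i.e. $f(\bm t+\bm h\star\bm x)-f(\bm t)\ge (M_1^\star+\epsilon^\star)\tilde h\sum_{i=1}^d x_i$ for every $\bm x\in S$.

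Next I would multiply this pointwise bound by $\psi(\bm x)\ge 0$ and integrate, using that $\psi\equiv 0$ off $S$, to obtain $\langle f(\bm t+\bm h\star\cdot)-f(\bm t),\psi\rangle\ge (M_1^\star+\epsilon^\star)\tilde h\,\langle\sum_{i=1}^d x_i,\psi\rangle$. Adding $(M_1^\star+\epsilon^\star)\tilde h\,\langle 1,\psi\rangle$ to both sides and using that $1+\sum_{i=1}^d x_i$ coincides with $\psi$ on $S$ while both functions vanish off $S$, the right-hand side collapses to $(M_1^\star+\epsilon^\star)\tilde h\,\langle\psi,\psi\rangle=(M_1^\star+\epsilon^\star)\tilde h\,\|\psi\|^2$, which is precisely the claim. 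The only slightly delicate point is keeping track of signs in the mean value step: coordinate-wise monotonicity of $f$ forces $f(\bm t+\bm h\star\bm x)-f(\bm t)\le 0$ on $S$, so a crude bound points the wrong way, and the argument succeeds only because the specific choice $h_i\propto (M_i^\star+\epsilon^\star)^{-1}$ makes $\sum_i(M_i^\star+\epsilon^\star)h_i(-x_i)$ telescope to $(M_1^\star+\epsilon^\star)\tilde h\sum_i(-x_i)$, combined with the clean identity $(1+\sum_i x_i)\mathbbm{1}_S=\psi$ that avoids any explicit Dirichlet-type integral evaluation.
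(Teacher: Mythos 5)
Your proof is correct and follows essentially the same route as the paper's: fix $\bm x$ in the support of $\psi_1^\ell$, apply the mean value theorem along the segment from $\bm t$ to $\bm t+\bm h\star\bm x$, bound the partial derivatives above by $M_i^\star+\epsilon^\star$, use the choice $h_i(M_i^\star+\epsilon^\star)=\tilde h(M_1^\star+\epsilon^\star)$ to telescope, and then integrate against $\psi\ge 0$ using $(1+\sum_i x_i)\mathbbm{1}_S=\psi$ to identify the right-hand side with $\|\psi\|^2$. The only cosmetic difference is that the paper folds the $(M_1^\star+\epsilon^\star)\tilde h$ constant into the pointwise bound before integrating, whereas you add it at the integral level; the two are algebraically identical.
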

	\noindent {\it Proof of Lemma~\ref{optlemma2}:}
	Suppose $B_{\infty}(\bm t_0,\bm h) \subseteq B_\infty(\bm t_0,\bm h_0)$ holds.
	Recall that $$\psi(\bm x)= \left(1+\sum_{i=1}^d x_i\right) \mathbb{I}\left(\bm x \leq \boldsymbol{0},\sum_{i=1}^d x_i \geq -1\right).$$
	Let $\bm x \in [-1,0]^d$ be such that $\sum_{i=1}^d x_i \geq -1$. Then, by the mean value theorem and~\eqref{eq:Bd-partial-f},
	\begin{eqnarray*}
		f(\bm t+\bm h\star \bm x)-f(\bm t)& =&(\bm h\star \bm x)^\top \triangledown f(\boldsymbol{\xi})  \qquad (\mbox{for some} ~\boldsymbol{\xi} \in [\bm t, \bm t+ \bm h\star \bm x])\\
		& \geq &\sum_{i=1}^d h_ix_i(M_i^\star+\epsilon^\star)   \qquad (\mbox{note that } \bm h\star \bm x \leq \boldsymbol{0})\\
		&=& \tilde{h} (M_1^\star+\epsilon^\star) \sum_{i=1}^d x_i.
	\end{eqnarray*}
	Hence on the set $D:=\{\bm x \leq \boldsymbol{0},\sum_{i=1}^d x_i \geq -1\}$, we have
	$$f(\bm t+\bm h\star \bm x)-f(\bm t)+(M_1^\star+\epsilon^\star)\tilde{h} \geq \tilde{h}(M_1^\star+\epsilon^\star) \left(1+\sum_{i=1}^d x_i\right) \geq 0.$$
	Hence, as $\psi \ge 0$,
	\begin{equation}\label{optcondeq2}
		\langle f(\bm t+\bm h\star\cdot) -f(\bm t)+(M_1^\star+\epsilon^\star)\tilde{h}, \psi(\cdot) \rangle \geq \int_{D} \tilde{h}(M_1^\star+\epsilon^\star) \left(1+\sum_{i=1}^d x_i\right)^2 \, d{\bm x}= \tilde{h}(M_1^\star+\epsilon^\star)\|\psi\|^2.
	\end{equation} This completes the proof of Lemma \ref{optlemma2}. \qed
	\newline
	
	\noindent Let us get back to the proof of part (b) of Theorem~\ref{opconstant}. By \eqref{optconsteq19}, using \eqref{optcondeq2}, we get that as long as $B_{\infty}(\bm t_0,\bm h) \subseteq B_\infty(\bm t_0,\bm h_0)$, $(f-\hat{\ell})(\bm t_0) \geq (M_1^\star+\epsilon^\star) \tilde{h}$ implies that 
	$$\frac{\int_{B_{\infty}(\bm t_0,\bm h)} \psi_{\bm t_0,\bm h}(\bm x) \,dW(\bm x)}{\|\psi\|\tilde h^{d/2} M^{d/2}} \leq -\sqrt{n}\tilde{h}^{1+d/2} M^{d/2}(M_1^\star+\epsilon^\star) \|\psi\| + \Gamma(2^dM^d\tilde{h}^d) + \kappa_\alpha.$$
	Also note that 
	$$\frac{\int_{B_{\infty}(\bm t_0,\bm h)} \psi_{\bm t_0,\bm h}(\bm x) \,dW(\bm x)}{\|\psi\|\tilde h^{d/2} M^{d/2}} \sim N(0,1).$$
	Now let us choose $$\tilde{h} :=c(M_1^\star+\epsilon^\star)^{-\frac{2}{2+d}}\rho_n$$ where $\rho_n= (\log(en)/n)^{1/(2+d)}$ and the constant $c$ is to be chosen later.

	Note that $\rho_n \to 0$ as $n \to \infty$. Hence for large enough $n$, $B_{\infty}(\bm t_0,\bm h)\subseteq B_{\infty}(\bm t_0,\bm h_0)$ (here $\bm h$ depends on $n$). Also we have 
	\begin{eqnarray*}
		\Gamma(2^dM^d\tilde h^d)& \leq & \sqrt{\left(\frac{2d}{2+d}\right) \log (en)} \qquad \mbox{for large    } n, \\
		\sqrt{n}\tilde{h}^{1+d/2} M^{d/2}(M_1^\star+\epsilon^\star) \|\psi\|&=& M^{d/2}\|\psi\| c^{(d+2)/2} \sqrt{\log(en)}.
	\end{eqnarray*}
	Now let us pick $$c :=(1+\epsilon)\left( \frac{(d+2)\|\psi\|^2}{2d}\right)^{-1/(d+2)} M^{-\frac{d}{d+2}}.$$ Note that $\Delta^{(\ell)}=\left( \frac{(d+2)\|\psi\|^2}{2d}\right)^{-1/(d+2)}$ as defined in the statement of the theorem. Hence, for large $n$, we have (with $\Phi$ denoting the distribution function of standard normal distribution)
	\begin{eqnarray*}
		& &\p\left(f(\bm t_0)-\hat{\ell}(\bm t_0) \geq (M_1^\star+\epsilon^\star) \tilde{h}\right)\\ &\leq & \p\left(\frac{\int_{B_{\infty}(\bm t_0,\bm h)} \psi_{\bm t_0,\bm h}(\bm x)dW(\bm x)}{\|\psi\|\tilde h^{d/2} M^{d/2}} \leq -\sqrt{n}\tilde{h}^{1+d/2} M^{d/2}(M_1^\star+\epsilon^\star) \|\psi\| + \Gamma(2^dM^d\tilde{h}^d) + \kappa_\alpha \right)\\
		& =& \Phi\left( -\sqrt{n}\tilde{h}^{1+d/2} M^{d/2}(M_1^\star+\epsilon^\star) \|\psi\| + \Gamma(2^dM^d\tilde{h}^d) + \kappa_\alpha \right)\\
		&\leq & \Phi\left(\kappa_\alpha - \sqrt{\log(en)}\left[M^{d/2}\|\psi\| c^{(d+2)/2}- \sqrt{\frac{2d}{2+d}} \right] \right)\\
		&=& \Phi\left(\kappa_\alpha - \sqrt{\log(en)} \sqrt{\frac{2d}{2+d}} \left[(1+\epsilon)^{\frac{d+2}{2}}-1\right]\right) \to 0 \quad \mbox{   as   } n\to \infty.
	\end{eqnarray*}
	Hence,
	\begin{equation}
		\lim_{n\to \infty} \p\left( (f-\hat{\ell})(\bm t_0) \leq (M_1^\star+\epsilon^\star) \tilde{h}\right) = 1.\label{optconst3}\end{equation}
	Notice that,
	\begin{eqnarray}(M_1^\star+\epsilon^\star) \tilde{h}&=& (M_1^\star+\epsilon^\star) c (M_1^\star+\epsilon^\star)^{-\frac{2}{2+d}}\rho_n \nonumber\\
		&=&\rho_n (M_1^\star+\epsilon^\star)^{d/(2+d)}(1+\epsilon)\Delta^{(\ell)}M^{-d/(d+2)}\nonumber \\
		&=&(1+\epsilon)\Delta^{(\ell)}\rho_n \left(\frac{(M_1^\star+ \epsilon^\star)^d}{M^d}\right)^{1/(d+2)}\nonumber \\
		&=& (1+\epsilon)\Delta^{(\ell)}\rho_n  \left( \Pi_{i=1}^d (M_i^\star+\epsilon^\star) \right)^{1/(d+2)} \qquad \qquad (\mbox{using}~\eqref{eq:M})\nonumber \\
		&\leq & (1+\epsilon)\Delta^{(\ell)}\rho_n (1+\epsilon)^{d/(d+2)} L_1^{d/(d+2)}[\bm t_0]\nonumber \qquad \qquad (\mbox{using}~\eqref{eq:L_1-M_1}) \\
		&=&(1+\epsilon)^{(2d+2)/(d+2)}\Delta^{(\ell)} \rho_n L_1^{d/(d+2)}[\bm t_0].\nonumber
	\end{eqnarray}
	As $\epsilon >0$ is arbitrary, the above display along with~\eqref{optconst3} yields the desired result.\qed
	
	\subsection{Proof of Theorem \ref{opconstantcx}}\label{propconstantcx}
	(a)~ Once again, we prove only the bound for $\|f- {\ell}\|_{U}$ and the other case can be handled similarly. We will show that for any level $1-\alpha$ confidence band $[\ell,u]$ with guaranteed coverage probability for the class $\mathcal{F}_2$, and any $0<\gamma<1$, we have  $$\liminf_{n \to \infty}\p_{f}\left(\|f- \ell\|_{U} \geq \gamma \Delta^{(\ell)} L_2^{\frac{d}{4+d}}[\bm t_0] \rho_n\right) \geq 1-\alpha.$$ 
	
	We first introduce a rotation of the coordinate system, so that the Hessian of $f$ at $\bm t_0$ with respect to this new  coordinate system, is diagonal. If $\nabla^2 f(\bm t_0) = \bm P \bm D \bm P^\top$ is the spectral decomposition of the Hessian of $f$ at $\bm t_0$ (here $\bm P$ denotes an orthogonal matrix and $\bm D$ is a diagonal matrix), then for any point $\bm y \in \R^d$, we will define $\bm y' := \bm P^\top \bm y$, and for any set $S\subseteq \mathbb{R}^d$, we will define:
	\begin{equation}\label{Set-prime}
		S' := \{\bm P^\top \bm s: \bm s\in S\}~.
	\end{equation}
	Further, defining $$g(\bm t) := f(\bm P \bm t),$$ we note that $g(\bm t') = f(\bm P {\bm P}^\top \bm t) = f(\bm t)$ for all $\bm t$ (recall our notation that $\bm t' = \bm P^\top \bm t$), and $\nabla^2 g(\bm t_0') = \bm D$.

	Recall that by assumption, $f$ is twice continuously differentiable on an open neighborhood $U$ of $\bm t_0 \in (0,1)^d$ such that $L_2[\bm t_0] \equiv L_2[f,\bm t_0]:= \mathrm{det}(\nabla^2 f(\bm t_0))^{1/d} > 0$. Hence, $g$ is twice continuously differentiable on the open neighborhood $U'$ of $\bm t_0'$. Denote the $i^{\mathrm{th}}$ smallest eigenvalue of $\nabla^2 f(\bm t_0)$ by $\lambda_i(\bm t_0)$, for $i=1,\ldots d$.  Since $\gamma<1$, we can find $\epsilon>0$ and  $\gamma^*<1$ such that:
	\begin{equation}\label{eq:L_2-t_0}
		\gamma  L_2^{d/(4+d)}[\bm t_0]=\gamma^\star( L_2[\bm t_0]-\epsilon)^{d/(4+d)}.
	\end{equation} 
	Using the twice continuous differentiability of $g$ on $U'$, we can find $\bm h_0\in [0,1]^d$ and  $\epsilon^\star >0$ small enough such that \begin{itemize} 
		\item[(i)] $B_\infty'(\bm t_0',\bm h_0) \subset U'$, 
		
		\item[(ii)] $\left[\prod_{i=1}^d (\lambda_i(\bm t_0)-\epsilon^\star)\right]^{1/d} \geq {L}_2[\bm t_0]-\epsilon$,
		
		\item[(iii)] for all $\bm x' \in B_\infty'(\bm t_0',\bm h_0)$, we have $$\sup_{\bm v \in B_2(\boldsymbol{0},1)}\left|\bm v^\top \left(\nabla^2 g(\bm x') - \nabla^2 g(\bm t_0')\right)\bm v\right| < \epsilon^*$$
		where $B_2(\boldsymbol{0},1)$ denotes the closed ball around $\boldsymbol{0} \in \R^d$ with radius $1$, and $B_{\infty}'(\bm y, \bm h) := (B_{\infty}(\bm P \bm y, \bm h))'$; recall the notation in~\eqref{Set-prime}.
	\end{itemize}
	
	Next, let $\bm h \equiv (h_1,h_2,\ldots,h_d)$ be such that $h_1 \in (0,1/2]$ and \begin{equation}\label{convex_hi}
		h_i= h_1\times \sqrt{\frac{\lambda_1(\bm t_0)-\epsilon^\star}{\lambda_i(\bm t_0)-\epsilon^\star}}, 
		\qquad \qquad \mbox{for } i=2,\ldots, d.   
	\end{equation}    Let us define a set of grid points $G \subset [0,1]^d$ for bandwidth $\bm h$ as 
	%$$G :=\left\{\bm t=(t_1,\ldots,t_d): t_i=t_{0i}+h_i(2k_i) \mbox{ for some integer } k_i  , B_\infty(\bm t,\bm h) \subset B_{\infty}(\bm t_0,\bm h_0) \right\}$$
	$$G :=\left\{\bm t=(t_1,\ldots,t_d) \in [0,1]^d: t_i=t_{0i}+\alpha_d h_i(2k_i) \mbox{ for some integer } k_i  , B_\infty(\bm t, \alpha_d \bm h) \subset B_{\infty}(\bm t_0,\bm h_0) \right\}$$
	where $\bm t_0=(t_{01},\ldots,t_{0d})$ and $\alpha_d := \sqrt{2(d+3)/(d+1)}$. For $\bm t'\in G'$, let 
	\begin{equation}\label{eq:g_t'}
		g_{\bm t'}(\bm x):= g(\bm x)-h_1^2(\lambda_1(\bm t_0)-\epsilon^\star) \psi^*\left(\frac{x_1-t_1'}{h_1},\ldots,\frac{x_d-t_d'}{h_d}\right)
	\end{equation}
	% $$\textcolor{blue}{g_{\bm t'}(\bm x):= g(\bm x)-h_1^2(\lambda_1(\bm t_0)-\epsilon^\star) \psi_2^\ell\left(P(\frac{x_1-t_1'}{h_1},\ldots,\frac{x_d-t_d'}{h_d})\right)} $$
	where 
	\begin{equation}\label{eq:psi_*}
		\psi^*(\cdot):= (G_A - G_0)(\bm P \cdot)
	\end{equation}
	with $G_A$, $G_0$ defined as follows:
	$$G_A(\bm y) := \frac{\|\bm y\|^2}{2} \mathbbm{1}_{\|\bm y\| \le \sqrt{2(d+3)/(d+1)}}\quad \text{and}\quad G_0(\bm y) := \left(-1+ \frac{\sqrt{2}(d+2)}{\sqrt{(d+1)(d+3)}} \|\bm y\|\right) \mathbbm{1}_{\|\bm y\| \le \sqrt{2(d+3)/(d+1)}}~.$$
	Note that $\psi_2^\ell(\bm x) = (G_A - G_0)\left(\sqrt{2(d+3)/(d+1)} \bm x\right)$.
	
	We will now show that for every $\bm t' \in G'$, the function $g_{\bm t'} \in \mathcal{F}_2$.
	
	\begin{lem}\label{ftinF2}
		$g_{\bm t'} \in \mathcal{F}_2 \mbox{  for all   } \bm t' \in G'.$
	\end{lem}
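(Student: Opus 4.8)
\emph{Proof proposal.} The plan is to show $g_{\bm t'}$ is convex on all of $\R^d$ by splitting along the support of the perturbation. Put $c := h_1^2(\lambda_1(\bm t_0)-\epsilon^\star)$ and $\phi(\bm x) := \big(\sum_{i=1}^d (x_i-t_i')^2/h_i^2\big)^{1/2}$, so that the perturbation in \eqref{eq:g_t'} is supported on the closed ellipsoid $E := \{\bm x : \phi(\bm x)\le\alpha_d\}$, which is convex as a sublevel set of the convex function $\phi$. For $\epsilon^\star$ small all $\lambda_i(\bm t_0)-\epsilon^\star$ are positive (the Hessian of a convex $f$ with $\det\nabla^2 f(\bm t_0)>0$ has only positive eigenvalues), so $c>0$ and the $h_i$ in \eqref{convex_hi} are well defined. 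Since $\bm P$ is orthogonal and $G_A,G_0$ depend only on the Euclidean norm, $\psi^*=G_A-G_0$ is radial, namely $\psi^*(\bm w) = \big(\tfrac12\|\bm w\|^2 + 1 - b\|\bm w\|\big)\mathbbm 1_{\|\bm w\|\le\alpha_d}$ with $b := \sqrt2(d+2)/\sqrt{(d+1)(d+3)}$; note $\tfrac12\alpha_d^2 + 1 - b\alpha_d = 0$, so $\psi^*$ vanishes on $\partial E$ and $g_{\bm t'}$ is continuous. Outside $E$ one has $g_{\bm t'} = g$, which is convex since $g(\bm x)=f(\bm P\bm x)$ with $f\in\mathcal{F}_2$; hence the problem reduces to convexity of $g_{\bm t'}$ on $\overline E$ together with matching across $\partial E$.

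On $\overline E$ the perturbation equals $\tfrac{c}{2}\sum_i (x_i-t_i')^2/h_i^2 + c - cb\,\phi(\bm x)$, and I would use the decomposition
$$g_{\bm t'}(\bm x)\;=\;\Big(g(\bm x)-\tfrac{c}{2}\sum_{i=1}^d\frac{(x_i-t_i')^2}{h_i^2}\Big)\;+\;cb\,\phi(\bm x)\;-\;c,\qquad \bm x\in\overline E.$$
Here $cb\,\phi$ is convex on $\R^d$ (the Euclidean norm composed with an affine map), and the first bracket has Hessian $\nabla^2 g(\bm x)-\mathrm{diag}(c/h_i^2)$. The choice \eqref{convex_hi} is made precisely so that $h_i^2(\lambda_i(\bm t_0)-\epsilon^\star)=h_1^2(\lambda_1(\bm t_0)-\epsilon^\star)=c$ for all $i$, i.e. $\mathrm{diag}(c/h_i^2)=\bm D-\epsilon^\star I=\nabla^2 g(\bm t_0')-\epsilon^\star I$, while property (iii) (evaluated on unit vectors) gives $\nabla^2 g(\bm x)\succeq\nabla^2 g(\bm t_0')-\epsilon^\star I$ for every $\bm x\in\overline E\subset B_\infty'(\bm t_0',\bm h_0)$. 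Thus the first bracket has positive semidefinite Hessian on the convex set $\overline E$, hence is convex there, and $g_{\bm t'}$ is a sum of convex functions on $\overline E$.

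It remains to glue the two convex pieces across $\partial E$. Since $g_{\bm t'}$ is continuous, by restricting to an arbitrary segment $[\bm x,\bm y]$ it suffices that the one-sided derivatives of $s\mapsto g_{\bm t'}(\bm x+s(\bm y-\bm x))$ do not decrease at the (at most two) crossings of $\partial E$; then $g_{\bm t'}$ is convex on every line. The key point is that on $\partial E$ the gradient $\nabla\psi^*(\bm w)=(1-b/\alpha_d)\bm w=\tfrac{1}{d+3}\bm w$ points radially outward with \emph{positive} coefficient (using the elementary identity $b/\alpha_d=(d+2)/(d+3)<1$). Hence $\nabla_{\bm x}\big[\psi^*((x_i-t_i')/h_i)_i\big]$ has nonnegative inner product with any outward direction on $\partial E$, so as one crosses $\partial E$ outward the directional derivative of $g_{\bm t'}=g-c\psi^*(\cdot)$ jumps \emph{upward} by a nonnegative amount — exactly the inequality between one-sided derivatives that a convex function must satisfy — and the inward crossing is symmetric. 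Together with convexity of $g_{\bm t'}$ on $\overline E$ and on its complement this yields convexity of $g_{\bm t'}$ on every line, hence $g_{\bm t'}\in\mathcal{F}_2$.

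The step I expect to be most delicate is this last gluing across $\partial E$: it requires careful handling of the one-sided directional derivatives of the non-$C^1$ function $g_{\bm t'}$, and it also uses the construction of the grid $G$ (together with (i)–(iii)) to guarantee that the rotated support ellipsoid $\overline E$ genuinely lies inside $B_\infty'(\bm t_0',\bm h_0)$, so that the Hessian lower bound is available on all of $\overline E$. The remaining ingredients — the radial form of $\psi^*$, the cancellation $c/h_i^2=\lambda_i(\bm t_0)-\epsilon^\star$, and the sign $1-b/\alpha_d=1/(d+3)>0$ — are routine bookkeeping.
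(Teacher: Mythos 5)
Your proposal is correct in its essentials but uses a genuinely different decomposition from the paper. The paper's proof restricts to one-dimensional slices, writes $h'(\alpha)-h'(\beta)=(\alpha-\beta)[\bm v^\top\nabla^2 g(\xi\bm v)\bm v - h_1^2(\lambda_1-\epsilon^\star)\bm v^\top\nabla^2\phi(\eta\bm v)\bm v]$ via a second application of the mean value theorem, lower bounds the $\nabla^2 g$ term using (iii) and the diagonality of $\nabla^2 g(\bm t_0')$, and upper bounds the $\nabla^2\phi$ term by $\sum v_i^2/h_i^2$ using that $\nabla^2 G_A=I$ and $\nabla^2 G_0\succeq 0$, then sees these cancel by \eqref{convex_hi}. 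You instead split $\R^d$ into $\overline E$ and its complement, and on $\overline E$ write $g_{\bm t'}$ as a sum of two explicitly convex pieces --- a $C^2$ part $g-\tfrac{c}{2}\sum(x_i-t_i')^2/h_i^2$ whose Hessian is $\nabla^2 g-(\bm D-\epsilon^\star I)\succeq 0$ by (iii) and the cancellation $c/h_i^2=\lambda_i(\bm t_0)-\epsilon^\star$, plus the convex cone term $cb\,\phi$ --- and then glue across $\partial E$ via the sign of the one-sided directional-derivative jump (governed by $1-b/\alpha_d=1/(d+3)>0$). What your route buys is that the non-smoothness of $\psi^*$ (the cusp at the center from $G_0$ and the $C^1$-break at $\partial E$) is handled explicitly: the cusp is absorbed into a convex summand and the boundary jump is shown to be in the correct direction, whereas the paper's MVT-on-$h'$ step implicitly treats $\phi$ as $C^2$ and leaves the crossing of $\partial E$ unaddressed. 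The one caveat you correctly flag --- that the support ellipsoid $\overline E$, after rotation by $\bm P$, must lie inside $B_\infty(\bm t_0,\bm h_0)$ so that (iii) is available on all of $\overline E$ --- is a genuinely delicate point, but it is shared by the paper's own proof (which likewise invokes (iii) at $\xi\bm v$ without checking this containment against the grid constraint $B_\infty(\bm t,\alpha_d\bm h)\subset B_\infty(\bm t_0,\bm h_0)$), so it is not a defect specific to your approach.
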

	\begin{proof}
		Fix $\bm t' \in G'$ and a vector $\bm v \in B_d(\boldsymbol{0},1)$. In order to prove Lemma \ref{ftinF2}, it suffices to show that the univariate function $h:\mathbb{R}\to \mathbb{R}$ defined as $h(s) := g_{\bm t'} (s \bm v)$ is convex. To prove this, take scalars $\alpha > \beta$, such that $\alpha \bm v$ and $\beta \bm v \in B_\infty'(\bm t_0', \bm h_0)$, and define $$\phi(\bm x) \equiv \phi_{\bm t', \bm h}(\bm x) := \psi^*\left(\frac{x_1-t_1'}{h_1},\ldots,\frac{x_d-t_d'}{h_d}\right), \qquad \mbox{for } \; \bm x \in [0,1]^d.$$
		Then, we have (using~\eqref{eq:g_t'})  
		\begin{eqnarray}
			&&h'(\alpha) - h'(\beta)\nonumber\\ &=& \left(\nabla g(\alpha \bm v) - \nabla g(\beta \bm v)\right)^\top \bm v -  h_1^2(\lambda_1(\bm t_0)-\epsilon^\star)\left[\nabla \phi(\alpha \bm v) - \nabla \phi(\beta \bm v)\right]^\top \bm v \nonumber \\&=& (\alpha-\beta) \left[\bm v^\top \nabla^2 g(\xi\bm v) \bm v - h_1^2(\lambda_1(\bm t_0)-\epsilon^\star) \bm v^\top \nabla^2 \phi (\eta \bm v) \bm v\right] \label{eq:hprime}
		\end{eqnarray}
		for some $\xi, \eta$ lying between $\alpha$ and $\beta$. First, note that since $\xi \bm v \in B_{\infty}'(\bm t_0', \bm h_0)$, we have:
		\begin{equation}\label{eq:nabla_g}
			\bm v^\top \nabla^2 g(\xi\bm v) \bm v  \ge \left[\bm v^\top \nabla^2 g(\bm t_0') \bm v - \epsilon^\star\right] =  \sum_{i=1}^d (\lambda_i(\bm t_0)-\epsilon^\star) v_i^2~.
		\end{equation}
		Here the inequality above follows from (iii) above and the equality follows from the fact that $\nabla^2 g(\bm t_0') $ is diagonal. Denote $D_{\bm h^{-1}}$ to be the diagonal matrix with diagonal entries $(1/h_i)_{i=1}^d$.
		Next, let us try to study the term $\bm v^\top \nabla^2 \phi (\eta \bm v) \bm v$. Note that  
		\begin{eqnarray}
			\bm v^\top \nabla^2 \phi (\eta \bm v) \bm v &=& \bm v^\top D_{\bm h^{-1}}^\top \left[\nabla^2 \psi^* ((\eta \bm v- \bm t')/\bm h)\right] D_{\bm h^{-1}} \bm v  \nonumber \\ %\quad \qquad \mbox{(here } (\eta \bm v- \bm t')/\bm h \mbox{ represents coordinatewise division) }  \nonumber  \\
			&=& \bm v^\top D_{\bm h^{-1}}^\top {\bm P}^\top \left[\nabla^2 (G_A-G_0)({\bm P}((\eta \bm v- \bm t')/\bm h))\right] {\bm P} D_{\bm h^{-1}} \bm v   \nonumber \\
			%&=& \bm v^\top D_{\bm h^{-1}}^\top {\bm P}^\top \left[\nabla^2 G_A({\bm P}((\eta \bm v- \bm t')/\bm h))\right] {\bm P} D_{\bm h^{-1}} \bm v - \bm v^\top D_{\bm h^{-1}}^\top {\bm P}^\top \left[\nabla^2 G_0({\bm P}((\eta \bm v- \bm t')/\bm h))\right] {\bm P} D_{\bm h^{-1}} \bm v \nonumber \\
			& \le & \bm v^\top D_{\bm h^{-1}}^\top  D_{\bm h^{-1}} \bm v \;\; = \sum_{i=1}^d \frac{v_i^2}{h_i^2}. \label{eq:Nabla_phi}
		\end{eqnarray} 
		Here, the inequality above follows from the fact as long as $\|\bm y\| < \sqrt{2(d+3)/(d+1)}$, $\nabla^2 G_A({\bm y})$ is  the identity matrix which yields the upper bound. Note that $\nabla^2 G_0({\bm y})$ is a positive semidefinite matrix (as $G_0$ is a convex function) and hence we can ignore the corresponding term.

		Hence, by using \eqref{eq:hprime}, \eqref{eq:nabla_g} and \eqref{eq:Nabla_phi} we have:
		\begin{eqnarray*}
			h'(\alpha) - h'(\beta)  &\ge& (\alpha-\beta) \left[\sum_{i=1}^d (\lambda_i(\bm t_0)-\epsilon^\star) v_i^2 - (\lambda_1(\bm t_0)-\epsilon^\star) \sum_{i=1}^d (h_1/h_i)^2 v_i^2  \right] = 0,
		\end{eqnarray*}
		where the equality follows from \eqref{convex_hi}.
		This completes the proof of Lemma \ref{ftinF2}.
	\end{proof}
	
	\noindent Let us get back to the proof of Theorem~\ref{opconstantcx}. Recall that $[\ell, u]$ is any given confidence band with guaranteed coverage. Let us define $A$
	to be the event that $$\{\ell(\bm P \bm x) \leq g_{\bm t'}(\bm x) \mbox{   for all } \bm x\in ([0,1]^d)',  \mbox{   for some  } \bm t' \in G'\}.$$  Now, since $[\ell,u]$ is a confidence band for all functions in $\mathcal{F}_2$, and since the function $g_{\bm t'}(\bm P^\top \cdot) \in \mathcal{F}_2$, we have  $$\p_{g_{\bm t'}}(A)\geq 1-\alpha \quad \mbox{   for all } \;\;\bm t'\in G'.$$ 
	Hence, defining $\ell^*(\bm x) := \ell (\bm P \bm x)$, we have:
	\begin{equation}\label{opt 6.279}
		\p_{g}\left(\|g- \ell^*\|_{U'} \geq h_1^2(\lambda_1(\bm t_0)-\epsilon^\star)\right) \geq \p_{g}(A)\geq 1-\alpha-\min_{\bm t'\in G'}\left( \p_{g_{\bm t'}}(A)-\p_{g}(A)\right).
	\end{equation}
	Note that the first inequality follows from the fact that if $A$ happens then there exists $\bm t' \in G'\subset U'$ such that $\ell^*(\bm x) \leq g_{\bm t'}(\bm x)$ on $([0,1]^d)'$, thereby yielding (as $\psi^*({\bm 0}) = 1$; see~\eqref{eq:psi_*}): $$\ell^*(\bm t') \leq g_{\bm t'}(\bm t')=g(\bm t') - h_1^2(\lambda_1(\bm t_0)-\epsilon^\star).$$ 
	Hence it is enough to bound $\min_{\bm t'\in G'}\left( \p_{g_{\bm t'}}(A)-\p_{g}(A)\right)$. Observe that
	\begin{eqnarray}\label{6.289}\min_{\bm t'\in G'} \p_{g_{\bm t'}}(A)-\p_{g}(A) &\leq & |G'|^{-1} \sum_{\bm t' \in G'} \left(\p_{g_{\bm t'}}(A) - \p_{g}(A) \right) \nonumber \\ &=& |G'|^{-1} \sum_{\bm t' \in G'}  \E_{g}\left( \left(\frac{d\p_{g_{\bm t'}}}{d\p_{g}}(Y)-1\right) \mathbb{I}_A(Y)\right) \nonumber \\ &\leq &\E_{g}\Big||G'|^{-1} \sum_{\bm t' \in G'} \left(\frac{d\p_{g_{\bm t'}}}{d\p_{g}}(Y)-1\right) \Big|. \end{eqnarray}
	
	Now by Cameron-Martin-Girsanov's theorem, we have $$\log \frac{d\p_{g_{\bm t'}}}{d\p_{g}}(Y)=n^{1/2}h_1^2(\lambda_1(\bm t_0)-\epsilon^\star) \sqrt{\Pi_{i=1}^d h_i} \|\psi^*\|X_{\bm t'} - \frac{n}{2}(\lambda_1(\bm t_0)-\epsilon^\star)^2 h_1^4 (\Pi_{i=1}^d h_i) \|\psi^*\|^2$$
	where $$X_{\bm t'} :=(\Pi_{i=1}^d h_i)^{-1/2}\|\psi^*\|^{-1} \int \phi_{\bm t', \bm h} ~dW$$
	with $W$ being the standard Brownian sheet on $[0,1]^d$. 
	Note that $X_{\bm t'}$ follows a standard normal distribution and for $\bm s'\neq \bm t^\prime \in G'$, $X_{\bm s'}$ and $X_{\bm t'}$ are independent. Now let $$w_n:=n^{1/2}h_1^2(\lambda_1(\bm t_0)-\epsilon^\star) \sqrt{\Pi_{i=1}^d h_i} \|\psi^*\| \qquad \text{and}\qquad \varepsilon_n := 1-(w_n/\sqrt{2\log |G'|}).$$
	If $\varepsilon_n \rightarrow 0$ and $\varepsilon_n \sqrt{\log |G'|} \rightarrow \infty$ are satisfied, then by Lemma \ref{normal lemma} and \eqref{6.289}, we have the following as $|G'|\rightarrow \infty$: 
	\begin{equation}\label{minpft9}
		\min_{\bm t'\in G'} \p_{g_{\bm t'}}(A) - \p_g(A) \rightarrow 0.
	\end{equation}
	Now let us choose \begin{equation}\label{eq:h_conv}
		h_1 := \sqrt{(1-\epsilon_n)c\rho_n} \qquad  \mbox{where} \qquad \rho_n :=(\log(en)/n)^{2/(4+d)},
	\end{equation}  with $\epsilon_n \to 0$ and $\epsilon_n\sqrt{\log n} \to \infty$ and $c$ is a constant to be chosen later. This implies that 
	$$\sqrt{2\log |G'|}=(1- o(1))\sqrt{\frac{2d}{d+4}\log n}$$ 
	and for large $n$,
	$\sqrt{2\log |G'|}<\sqrt{\frac{2d}{d+4}\log n}$. Hence,
	$$w_n=[c(1-\epsilon_n)]^{(4+d)/4} \|\psi^*\| \frac{(\lambda_1(\bm t_0)-\epsilon^\star)^{(4+d)/4}}{\prod_{i=1}^d (\lambda_i(\bm t_0)-\epsilon^\star)^{1/4}} \sqrt{\log(en)}.$$
	Let us now put: \begin{equation}\label{eq:c_convex}
		c:=\frac{\prod_{i=1}^d (\lambda_i(\bm t_0)-\epsilon^\star)^{1/(4+d)}}{(\lambda_1(\bm t_0)-\epsilon^\star)} \left[\frac{(d+4)\|\psi^\star\|^2}{2d}\right]^{-2/(4+d)}   
	\end{equation}whence we have:
	\begin{eqnarray*}
		\frac{w_n}{\sqrt{2 \log |G'|}} = (1-\epsilon_n)^{(4+d)/4} \to 1 \mbox{  as  } n\to \infty.\\ % \frac{\Pi_{i=1}^d (M_i^\star-\epsilon^\star)^{1/2}}{(M_1^\star - \epsilon^\star)^{(2+d)/2}} \sqrt{(d+2)\norm{\psi}^2/2d} \norm{\psi} \frac{(M_1^\star-\epsilon^\star)^{(2+d)/2}}{\prod_{i=1}^d (M_i^\star-\epsilon^\star)^{1/2}} \sqrt{\frac{d+2}{2d}}\\
	\end{eqnarray*}
	Also note that for large $n$, $(1-w_n/\sqrt{2\log |G|}) >0$ as  $\sqrt{2\log |G|}<\sqrt{\frac{2d}{d+2}\log n}$. Also, note that: \begin{eqnarray*}\sqrt{\log |G'|} \left(1-\frac{w_n}{\sqrt{2\log |G'|}}\right) &\sim& \sqrt{\frac{d}{4+d}} \sqrt{\log n} (1- (1-\epsilon_n)^{1+d/4})\\
		& = & \sqrt{\frac{d}{4+d}} \left(\frac{4+d}{4}+o(1)\right)\epsilon_n \sqrt{\log n} \to \infty \quad  \mbox{(by assumption)}. \end{eqnarray*}
	
	Let $\Lambda_*^{(\ell)} := \left((d+4)\|\psi^*\|^2/2d\right)^{-2/(4+d)}$. 
	Hence, by \eqref{opt 6.279}, \eqref{minpft9}, \eqref{eq:h_conv} and \eqref{eq:c_convex}, we have:
	\begin{eqnarray*}
		1-\alpha & \leq &\liminf_{n \to \infty} \p_{g}\left(\|g- \ell^*\|_{U'} \geq h_1^2(\lambda_1(\bm t_0) -\epsilon^\star)\right)\\
		& = &\liminf_{n \to \infty} \p_{g}\left(\|g- \ell^*\|_{U'} \geq (1-\epsilon_n)\rho_n \Lambda_\star^{(\ell)} \Pi_{i=1}^d (\lambda_i(\bm t_0)-\epsilon^\star)^{1/(4+d)} \right)\\
		& \leq  & \liminf_{n \to \infty} \p_{g}\left(\|g- \ell^*\|_{U'} \geq \gamma^\star\rho_n \Lambda_\star^{(\ell)} \Pi_{i=1}^d (\lambda_i(\bm t_0)-\epsilon^\star)^{1/(4+d)} \right) \qquad (\mbox{as } 1-\epsilon_n \to 1 > \gamma^\star) \\
		& \leq & \liminf_{n \to \infty} \p_{g}\left(\|g- \ell^*\|_{U'} \geq \gamma^\star (L_2[\bm t_0]-\epsilon)^{d/(4+d)}\rho_n \Lambda_\star^{(\ell)}\right) \qquad (\mbox{by  (ii) above}) \\
		& =& \liminf_{n \to \infty} \p_{g}\left(\|g- \ell^*\|_{U'} \geq \gamma  L_2^{d/(4+d)}[\bm t_0]\rho_n \Lambda_\star^{(\ell)}\right) \qquad (\mbox{by } \eqref{eq:L_2-t_0}).
	\end{eqnarray*}
	Now, note that $\|g-\ell^*\|_{U'} = \|f-\ell\|_U$ and $\|\psi^*\|^2 =(\sqrt{2(d+3)/(d+1)})^d \|\psi^\ell\|^2$. 
	For bounding $\|u-f\|_U$, one works with the transformed kernel (instead of $\psi^*$ in~\eqref{eq:g_t'}) $$\xi_2^u(\bm x) := \left(1-\frac{\|\bm x\|^2}{2}\right)\mathbbm{1}_{\|\bm x\| \le \sqrt{2}}$$
	which is related to the kernel function $\psi_2^u$ by the relation $\psi_2^u(\bm x) := \xi_2^u(\sqrt{2}\bm x)$.
	This introduces the $(\sqrt{2})^d$ term in $\Delta^{(u)}$ and completes the proof of part (a) of Theorem \ref{opconstantcx}. \newline

	\noindent (b) As in the proof of Theorem \ref{opconstant} (b), we restrict our attention to $(f- \hat{\ell})(\bm t_0)$, since the other case can be handled by a similar argument. Fix $\epsilon>0$. For notational convenience, we will abbreviate $\psi^\ell$ by $\psi$. Since $f$ is twice continuously differentiable on an open neighborhood $U$ of $\bm t_0$,  we can find $\epsilon^\star>0$ and  a hyperrectangle $B_{\infty}(\bm t_0,\bm h_0)$ small enough such that, for all $1\le i \le d$, 
	\begin{equation*}\label{HessianUB1}
		\sup_{\bm x \in B_{\infty}(\bm t_0,\bm h_0)} H_{ii}(\bm x) \leq H_{ii}(\bm t_0)+\epsilon^\star
	\end{equation*}
	and 
	\begin{equation}\label{HessianUB2}
		[\Pi_{i=1}^d (H_{ii}(\bm t_0)+\epsilon^\star)]^{1/d} \leq (1+\epsilon) {L}_{2,\star}[f,\bm t_0].
	\end{equation}
	%Without loss of generality, let us assume that $H_{11}(\bm t_0) \le \ldots \le H_{dd}(\bm t_0)$.
	Next, let $\bm h=(h_1,\ldots,h_d)$ be such that \begin{equation}\label{eq:h_i-cvx}
		h_i:=\tilde{h} \times \sqrt{\frac{H_{11}(\bm t_0)+\epsilon^\star}{H_{ii}(\bm t_0)+\epsilon^\star}}, 
	\end{equation} 
	where $\tilde{h}$ will be chosen later. Let \begin{equation}\label{Convex_h_M}
		M:=\left[\Pi_{i=1}^d \frac{H_{11}(\bm t_0)+\epsilon^\star}{H_{ii}(\bm t_0)+\epsilon^\star}\right]^{1/2d}
	\end{equation} 
	which implies that $\Pi_{i=1}^d h_i= \tilde{h}^d M^d$.
	Next, as before, recall that $$ \hat{\ell}(\bm t_0)=\sup_{\bm h\in I : \bm t_0 \in A_{\bm h}} \left\{ \hat{f_{\bm h}}(\bm t_0) - \frac{\|\psi\|}{{\langle 1,\psi \rangle (n\Pi_{i=1}^dh_i)^{1/2}}} \left(\kappa_\alpha+\Gamma(2^d\Pi_{i=1}^dh_i)\right)\right\}$$ 
	and  
	\begin{eqnarray*}\hat{{f_{\bm h}}}(\bm t_0)& =  &\frac{1}{n^{1/2} (\Pi_{i=1}^dh_i) \langle 1,\psi\rangle} \int_{[0,1]^d} \psi_{\bm t_0,\bm h}(\bm x)\,dY(\bm x)\\
		&=&\frac{1}{\langle 1,\psi\rangle } \langle f(\bm t_0+\bm h\star \cdot),\psi(\cdot) \rangle + \frac{1}{n^{1/2} (\Pi_{i=1}^dh_i) \langle 1,\psi\rangle} \int_{B_\infty(\bm t_0,\bm h)} \psi_{\bm t_0,\bm h}(\bm x)\,dW(\bm x)
	\end{eqnarray*}
	where for $\bm h,\bm x\in\R^d$ we define $\bm h\star \bm x:= (h_1x_1,\ldots ,h_dx_d)$.

	Recall that $\alpha_d := \sqrt{2(d+3)/(d+1)}$. Now, it follows from the definition of $\hat{\ell}({\bm t_0})$ that if $f(\bm t_0)-\hat{\ell}(\bm t_0) \geq (H_{11}(\bm t_0)+\epsilon^\star) d\tilde{h}^2/\alpha_d^2$, then $$\hat{f}_{\bm h}(\bm t_0)-\frac{\|\psi\|\left(\kappa_{\alpha}+\Gamma(2^dM^d\tilde h^d)\right)}{n^{1/2}\tilde h^{d/2} M^{d/2}\langle 1,\psi \rangle} \leq f(\bm t_0)-(H_{11}(\bm t_0)+\epsilon^\star)d\tilde h^2/\alpha_d^2, $$
	which can be rewritten as: 
	
	\begin{eqnarray}
		\frac{\int_{B_{\infty}(\bm t_0,\bm h)} \psi_{\bm t_0,\bm h}(\bm x)dW(\bm x)}{\|\psi\|\tilde h^{d/2} M^{d/2}} &\leq & -\frac{(n\tilde{h}^dM^d)^{1/2}}{\|\psi\|} \left \langle f(\bm t_0+\bm h\star\cdot) -f(\bm t_0)+(H_{11}(\bm t_0) +\epsilon^\star)d\tilde{h}^2/\alpha_d^2, \psi(\cdot) \right \rangle \nonumber\\
		& & \qquad \qquad \qquad + \Gamma(2^dM^d\tilde{h}^d) + \kappa_\alpha.  \label{optconsteq199}
	\end{eqnarray}
	
	\begin{lem}\label{optlemma29}
		Suppose $\bm h\in[0,1]^d$ is such that $B_{\infty}(\bm t,\bm h) \subseteq B_{\infty}(\bm t_0,\bm h_0)$. Then  $$\langle f(\bm t+\bm h\star\cdot) -f(\bm t)+(H_{11}(\bm t_0)+\epsilon^\star)d\tilde{h}^2/\alpha_d^2, \psi(\cdot) \rangle \geq \textcolor{black}{(H_{11}(\bm t_0)+\epsilon^\star)d\tilde{h}^2 \|\psi\|^2}/\alpha_d^2~.$$
	\end{lem}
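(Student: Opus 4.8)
The plan is to imitate the structure of the proof of Lemma~\ref{optlemma2}, but since the convex lower kernel $\psi:=\psi_2^\ell$ changes sign (it is $>0$ for $\|\bm x\|<\frac{d+1}{d+3}$ and $<0$ for $\frac{d+1}{d+3}<\|\bm x\|<1$), the clean pointwise domination available in the monotone case is not. Instead I would dominate the second-order Taylor remainder of $f$ in the \emph{convex order} by a radial quadratic, and then invoke the one-sided bias inequality for convex functions established in the proof of Theorem~\ref{thm2} (i.e.\ $\langle h,\psi_2^\ell\rangle\le h(\bm 0)\langle 1,\psi_2^\ell\rangle$ for convex $h$, cf.~\eqref{short}). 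Concretely, first I would Taylor-expand: since $B_\infty(\bm t,\bm h)\subseteq B_\infty(\bm t_0,\bm h_0)$ and $f$ is twice continuously differentiable there, the function $g(\bm x):=f(\bm t+\bm h\star\bm x)-f(\bm t)-(\bm h\star\bm x)^\top\nabla f(\bm t)$ is convex on $\{\|\bm x\|\le 1\}$, satisfies $g(\bm 0)=0$, $\nabla g(\bm 0)=\bm 0$, and has Hessian $\nabla^2 g(\bm x)=D_{\bm h}\,H(\bm t+\bm h\star\bm x)\,D_{\bm h}$, where $D_{\bm h}:=\mathrm{diag}(h_1,\dots,h_d)$ and $H$ is the Hessian of $f$. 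Because $\psi_2^\ell$ is radially symmetric, $\int x_i\,\psi(\bm x)\,d\bm x=0$ for each $i$, so the linear term integrates away and $\langle f(\bm t+\bm h\star\cdot)-f(\bm t),\psi\rangle=\langle g,\psi\rangle$; hence it suffices to prove $\langle g,\psi\rangle+ C\langle 1,\psi\rangle\ge C\|\psi\|^2$ with $C:=(H_{11}(\bm t_0)+\epsilon^\star)d\tilde h^2/\alpha_d^2$.

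\textbf{Convex-order comparison and exact evaluation.} Let $Q(\bm x):=\tfrac12(\bm h\star\bm x)^\top\big(H(\bm t_0)+\epsilon^\star\boldsymbol I_d\big)(\bm h\star\bm x)$. Shrinking $\bm h_0$ if necessary (using continuity of $H$ at $\bm t_0$, a mild strengthening of the bound $\sup_{\bm y\in B_\infty(\bm t_0,\bm h_0)}H_{ii}(\bm y)\le H_{ii}(\bm t_0)+\epsilon^\star$ fixed earlier in the proof of part~(b)), we may assume $H(\bm y)\preceq H(\bm t_0)+\epsilon^\star\boldsymbol I_d$ throughout $B_\infty(\bm t_0,\bm h_0)$. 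Then $\nabla^2 Q-\nabla^2 g(\bm x)=D_{\bm h}\big(H(\bm t_0)+\epsilon^\star\boldsymbol I_d-H(\bm t+\bm h\star\bm x)\big)D_{\bm h}\succeq\bm 0$ on $\{\|\bm x\|\le 1\}$, so $Q-g$ is convex there and vanishes at $\bm 0$; since the proof of~\eqref{short} only uses values on the support of $\psi_2^\ell$, applying it to $h:=Q-g$ gives $\langle Q-g,\psi\rangle\le 0$, i.e.\ $\langle g,\psi\rangle\ge\langle Q,\psi\rangle$. Now radial symmetry of $\psi$ annihilates the off-diagonal entries of $H(\bm t_0)$ ($\langle x_ix_j,\psi\rangle=0$ for $i\ne j$) and makes each $\langle x_i^2,\psi\rangle$ equal to $\langle x_1^2,\psi\rangle=\tfrac1d\langle\|\cdot\|^2,\psi\rangle$, whence
\[
\langle Q,\psi\rangle=\tfrac12\langle x_1^2,\psi\rangle\sum_{i=1}^d h_i^2\big(H_{ii}(\bm t_0)+\epsilon^\star\big)=\tfrac d2\,\tilde h^2\big(H_{11}(\bm t_0)+\epsilon^\star\big)\langle x_1^2,\psi\rangle ,
\]
the last equality being exactly the reason the bandwidths are shifted by $\epsilon^\star$ in~\eqref{eq:h_i-cvx}: $h_i^2\big(H_{ii}(\bm t_0)+\epsilon^\star\big)=\tilde h^2\big(H_{11}(\bm t_0)+\epsilon^\star\big)$ for every $i$, so the sum telescopes to $d\tilde h^2(H_{11}(\bm t_0)+\epsilon^\star)$, which is precisely the coefficient appearing in $C$.

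\textbf{Finishing, and the main obstacle.} Combining the two displays, $\langle g,\psi\rangle+ C\langle 1,\psi\rangle- C\|\psi\|^2\ \ge\ \big(H_{11}(\bm t_0)+\epsilon^\star\big)d\tilde h^2\big[\tfrac12\langle x_1^2,\psi\rangle+\tfrac1{\alpha_d^2}(\langle 1,\psi\rangle-\|\psi\|^2)\big]$, so the lemma reduces to the purely numerical inequality
\[
\tfrac12\langle x_1^2,\psi_2^\ell\rangle+\tfrac1{\alpha_d^2}\big(\langle 1,\psi_2^\ell\rangle-\|\psi_2^\ell\|^2\big)\ \ge\ 0 ,
\]
where $\langle 1,\psi_2^\ell\rangle$, $\langle\|\cdot\|^2,\psi_2^\ell\rangle=d\langle x_1^2,\psi_2^\ell\rangle$ and $\|\psi_2^\ell\|^2$ are computed in polar coordinates via the one-dimensional integrals $\int_0^1 r^{d-1+k}\big(1-\tfrac{2d+4}{d+1}r+\tfrac{d+3}{d+1}r^2\big)\,dr$ (taking $k=0,1,2$, and the square). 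I expect this inequality to hold for every $d\ge 1$, being an equality precisely when $d=1$ (consistent with the one-dimensional result of~\cite{Dumbgen-2003}) and strict for $d\ge 2$. The crux, and the one genuinely computational point, is this last step: because $\psi_2^\ell$ is sign-changing one is forced to route through the convex-order comparison and Theorem~\ref{thm2}'s one-sided bias bound, after which everything hinges on the joint design of the kernel $\psi_2^\ell$ (through $\alpha_d$) and the $\epsilon^\star$-adjusted bandwidths, which make $\langle Q,\psi\rangle$ collapse exactly against the penalty constant $C$ and leave only the scalar inequality above to be verified in arbitrary dimension.
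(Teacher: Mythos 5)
Your route is genuinely different from the paper's and is structurally sound, but it is incomplete: the lemma is reduced to a scalar kernel inequality that you conjecture and never prove.

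The paper's proof does not go through a quadratic comparison at all. It divides the bias increment by the penalty constant $C:=(H_{11}(\bm t_0)+\epsilon^\star)d\tilde h^2/\alpha_d^2$, shows that the normalized function lies in $\mathbb{H}'_{2,1}$ after scaling by $\alpha_d$ (via the telescopic bound \eqref{eqsyst7} together with Lemma~\ref{nndrs}, which confines the max entry of $D_{\bm h}\nabla^2 f\,D_{\bm h}$ to the diagonal), and then invokes Lemma~\ref{techresl}, whose proof exploits the designed orthogonality $\langle G_0+1,\psi\rangle=0$ and so produces the constant $\|\psi\|^2-\langle1,\psi\rangle$ without any extra integral evaluations. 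Your proof instead subtracts the tangent plane (legitimately killed by radial symmetry of $\psi_2^\ell$), majorizes $g$ in Loewner order by the quadratic $Q(\bm x)=\tfrac12(\bm h\star\bm x)^\top(H(\bm t_0)+\epsilon^\star I_d)(\bm h\star\bm x)$, and applies the one-sided bias inequality \eqref{short} to the convex function $Q-g$. This is a cleaner conceptual argument (no Hölder class needed, no telescoping), and the balanced bandwidths \eqref{eq:h_i-cvx} make $\langle Q,\psi\rangle$ collapse to a multiple of $\langle x_1^2,\psi\rangle$ exactly as you describe. Two caveats you correctly flag but that are worth noting: (i) you need the operator bound $H(\bm y)\preceq H(\bm t_0)+\epsilon^\star I_d$ on $B_\infty(\bm t_0,\bm h_0)$, a strictly stronger requirement than the coordinatewise bound stated in part~(b) of the theorem's proof; this is available by continuity of the Hessian and a shrinkage of $\bm h_0$, but it means the lemma's inputs must be adjusted upstream. (ii) The bias inequality in the proof of Theorem~\ref{thm2} is stated for $g$ convex on all of $\mathbb{R}^d$, but its proof only ever touches $B_d$, so applying it to $Q-g$ (convex only on $B_d$) is valid, as you observe.

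The genuine gap is that you stop at the scalar inequality
\[
\tfrac12\langle x_1^2,\psi_2^\ell\rangle \;+\; \tfrac1{\alpha_d^2}\bigl(\langle 1,\psi_2^\ell\rangle-\|\psi_2^\ell\|^2\bigr)\;\ge\;0
\]
and merely say you expect it to hold, with equality at $d=1$. This is precisely the step that does all the work in your argument, and it cannot be waved through. For the record the inequality is true: carrying out the single-variable radial integrals one finds $\langle\|\cdot\|^2,\psi_2^\ell\rangle=-\tfrac{2S_{d-1}}{(d+1)(d+2)(d+3)(d+4)}$ and $\langle1,\psi_2^\ell\rangle-\|\psi_2^\ell\|^2=\tfrac{2S_{d-1}}{(d+1)^2(d+2)(d+4)}$ (with $S_{d-1}$ the surface area of the unit sphere), and the left-hand side evaluates to $\tfrac{(d-1)\,S_{d-1}}{d(d+1)(d+2)(d+3)(d+4)}$, which is nonnegative for every $d\ge1$ and vanishes exactly at $d=1$, matching your prediction. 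Without this verification, though, your proof of the lemma is not complete.
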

	\begin{proof}
		Observe that, using~\eqref{eq:h_i-cvx},
		\begin{eqnarray}\label{scale7}
			&& \langle f(\bm t+\bm h\star\cdot) -f(\bm t)+\textcolor{black}{(H_{11}(\bm t_0)+\epsilon^\star)d\tilde{h}^2/\alpha_d^2}, \psi(\cdot) \rangle\nonumber\\&=& \frac{(H_{11}(\bm t_0))+\epsilon^\star)d\tilde{h}^2}{\alpha_d^2} \left\langle \frac{f(\bm t+\bm h\star\cdot) -f(\bm t)}{\textcolor{black}{(H_{ii}(\bm t_0)+\epsilon^\star)d h_i^2/\alpha_d^2}} 
			+ 1~,~ \psi(\cdot)\right\rangle
		\end{eqnarray}
		for every $1\le i \le d$. Note that the Hessian of the function $f(\bm t+\bm h\star\cdot) -f(\bm t)$ is given by $\mathcal{H} \star \nabla^2 f(\bm t + \bm h\star \cdot)$, where $\mathcal{H} = ((\mathcal{H}_{i,j}))$ with $\mathcal{H}_{i,j} := h_ih_j$ and $\star$ denotes elementwise product of matrices. As the largest entry of a nonnegative definite matrix is always on its diagonal (by Lemma \ref{nndrs}), we next claim that with
		$$v:= \arg \max_{1\le i\le d} (H_{ii}(\bm t_0)+\epsilon^\star)h_i^2$$
		the function $g(\cdot) := (f(\bm t+\bm h\star\cdot) -f(\bm t))/(H_{vv}(\bm t_0)+\epsilon^*)h_v^2$ is in $\mathbb{H}_{2,1}'$, where we define the superclass $\mathbb{H}_{\beta,L}'\supseteq \mathbb{H}_{\beta,L}$ as the set of all functions $g: [0,1]^d \to \mathbb{R}$ satisfying \eqref{seccondhld} only. To see this, note that, for any ${\bm y} = (y_1,\ldots, y_d)$ and $ {\bm z} = (z_1,\ldots, z_d) \in \R^d$, we have the following by a telescopic argument: 
		\begin{eqnarray}
			\sum_{j=1}^d\left|\frac{\partial g(\bm y)}{\partial x_j}- \frac{\partial g(\bm z)}{\partial x_j}\right| &\le& \sum_{j=1}^d \sum_{i=1}^d    \left|\frac{\partial}{\partial x_j} g(y_1,\ldots,y_{i-1},z_i,\ldots,z_d) -  \frac{\partial}{\partial x_j} g(y_1,\ldots,y_{i},z_{i+1},\ldots,z_d)\right| \nonumber \\ 
			&\le& \|\bm y-\bm z\|_1\sum_{j=1}^d\sup_{\bm u \in L(\bm y,\bm z)}\max_{1\le i\le d}\left|\frac{\partial^2}{\partial x_i \partial x_j} g(\bm u)\right|\nonumber\\ &\le& d\|\bm y-\bm z\|_1\sup_{\bm u \in L(\bm y,\bm z)}\max_{1\le i,j\le d}\left|\frac{\partial^2}{\partial x_i \partial x_j} g(\bm u)\right| \label{eqsyst7}
		\end{eqnarray}
		where $L(\bm y,\bm z)$ denotes the hyperrectangle defined by the two extreme points $\bm y$ and $\bm z$. Next we will use the following lemma (proved in Appendix~\ref{techres}).
		\begin{lem}\label{techresl}
			The function $\psi_2^\ell$ defined in \eqref{psi2defnt} satisfies:
			$$%{\color{red}\langle 1, \psi_2^\ell \rangle \ge \|\psi_2^\ell\|^2}\qquad\text{and}\qquad 
			\langle s,\psi_2^\ell\rangle \ge \|\psi_2^\ell\|^2 - \langle 1, \psi_2^\ell \rangle$$ 
			for all $s: B_d\to \R$ whenever $s(\boldsymbol{0}) \ge 0$ and the function $\bm x \to s(\bm x/\alpha_d)$ with support $\{\bm y: \|\bm y\| \le \alpha_d\}$ is in $H_{2,1}'$.
		\end{lem}
		We will take $s(\cdot) := \frac{f(\bm t+\bm h\star\cdot) -f(\bm t)}{(H_{vv}(\bm t_0)+\epsilon^\star)d h_v^2/\alpha_d^2}$ in the above lemma. By~\eqref{eqsyst7} we see that $s(\cdot/\alpha_d) \in \mathbb{H}_{2,1}'$ and $s({\bm 0}) = 0$. Thus, Lemma \ref{optlemma29} now follows from Lemma \ref{techresl}.
	\end{proof}
	\vspace{0.1in}
	
	\noindent We now continue with the proof of Theorem~\ref{opconstantcx} (b). By \eqref{optconsteq199} and Lemma \ref{optlemma29} we get that as long as $B_{\infty}(\bm t_0,\bm h) \subseteq B_\infty(\bm t_0,\bm h_0)$, $(f-\hat{\ell})(\bm t_0) \geq (H_{11}(\bm t_0)+\epsilon^\star) d\tilde{h}^2/\alpha_d^2$ implies that 
	$$\frac{\int_{B_{\infty}(\bm t_0,\bm h)} \psi_{\bm t_0,\bm h}(\bm x)dW(\bm x)}{\|\psi\|\tilde h^{d/2} M^{d/2}} \leq -\sqrt{n}\tilde{h}^{2+d/2} M^{d/2}(H_{11}(\bm t_0)+\epsilon^\star) d\|\psi\|/\alpha_d^2 + \Gamma(2^dM^d\tilde{h}^d) + \kappa_\alpha.$$
	Also note that 
	$$\frac{\int_{B_{\infty}(\bm t_0,\bm h)} \psi_{\bm t_0,\bm h}(\bm x) \, dW(\bm x)}{\|\psi\|\tilde h^{d/2} M^{d/2}} \sim N(0,1).$$
	Now let us choose $$\tilde{h} :=\sqrt{c(H_{11}(\bm t_0)+\epsilon^\star)^{-\frac{4}{4+d}}\rho_n}$$ where $\rho_n :=(\log(en)/n)^{2/(4+d)}$ and the constant $c$ will be chosen later.

	Note that $\rho_n \to 0$ as $n \to \infty$. Hence for large enough $n$, $B_{\infty}(\bm t_0,\bm h)\subseteq B_{\infty}(\bm t_0,\bm h_0)$ (here $\bm h$ depends on $n$). Also we have 
	\begin{eqnarray*}
		\Gamma(2^dM^d\tilde h^d)& \leq & \textcolor{black}{\sqrt{\left(\frac{2d}{4+d}\right) \log n}} \qquad \mbox{for large    } n\\
		\sqrt{n}\tilde{h}^{2+d/2} M^{d/2}(H_{11}(\bm t_0)+\epsilon^\star) \|\psi\|&=& M^{d/2}\|\psi\| c^{(d+4)/4} \sqrt{\log(en)}.
	\end{eqnarray*}
	Now let us pick \begin{equation}\label{convex_c}
		c :=(1+\epsilon)\left( \frac{(d+4)\|\psi\|^2}{2d}\right)^{-2/(d+4)} M^{-\frac{2d}{d+4}}~\alpha_d^\frac{8}{4+d} d^{-\frac{4}{d+4}}.
	\end{equation}
	Define $\Lambda^{(\ell)} :=\left( \frac{(d+4)\|\psi\|^2}{2d}\right)^{-2/(d+4)}$. Hence, for large $n$, we have \begin{eqnarray*}
		& &\p\left(f(\bm t_0)-\hat{\ell}(\bm t_0) \geq (H_{11}(\bm t_0)+\epsilon^\star) d\tilde{h}^2/\alpha_d^2\right)\\ &\leq & \p\left(\frac{\int_{B_{\infty}(\bm t_0,\bm h)} \psi_{\bm t_0,\bm h}(\bm x)dW(\bm x)}{\|\psi\|\tilde h^{d/2} M^{d/2}} \leq  -\sqrt{n}\tilde{h}^{2+d/2} M^{d/2}(H_{11}(\bm t_0)+\epsilon^\star) d \|\psi\|/\alpha_d^2 + \Gamma(2^dM^d\tilde{h}^d) + \kappa_\alpha\right)\\
		& =& \Phi\left( -\sqrt{n}\tilde{h}^{2+d/2} M^{d/2}(H_{11}(\bm t_0)+\epsilon^\star) d\|\psi\|/\alpha_d^2 + \Gamma(2^dM^d\tilde{h}^d) + \kappa_\alpha \right)\\
		&\leq & \Phi\left(\kappa_\alpha - \sqrt{\log(en)}\left[M^{d/2}d\|\psi\| c^{(d+4)/4}/\alpha_d^2- \sqrt{\frac{2d}{4+d}} \right] \right)\\
		&=& \Phi\left(\kappa_\alpha - \sqrt{\log(en)} \sqrt{\frac{2d}{4+d}} \left[(1+\epsilon)^{\frac{d+4}{4}}-1\right]\right) \to 0 \;\;\mbox{   as   } \;\;n\to \infty.
	\end{eqnarray*}
	Thus, 
	\begin{equation}
		\lim_{n\to \infty} \p\left( (f-\hat{\ell})(\bm t_0) \leq (H_{11}(\bm t_0)+\epsilon^\star) d\tilde{h}^2/\alpha_d^2\right) = 1.\label{optconst39}\end{equation}
	Now, \begin{eqnarray}&&(H_{11}(\bm t_0)+\epsilon^\star) d\tilde{h}^2/\alpha_d^2\nonumber\\ &=& d(H_{11}(\bm t_0)+\epsilon^\star) c (H_{11}(\bm t_0)+\epsilon^\star)^{-\frac{4}{4+d}}\rho_n/\alpha_d^2 \nonumber\\
		&= &\rho_n (H_{11}(\bm t_0)+\epsilon^\star)^{d/(4+d)}(1+\epsilon)\Lambda^{(\ell)}M^{-2d/(d+4)}d^{d/(d+4)} \alpha_d^{-\frac{2d}{d+4}} \qquad \quad (\mbox{by }\eqref{convex_c}) \nonumber \\
		&=& (1+\epsilon)\Lambda^{(\ell)}\rho_n  \left( \Pi_{i=1}^d (H_{ii}(\bm t_0)+\epsilon^\star) \right)^{1/(d+4)}d^{d/(d+4)} \alpha_d^{-\frac{2d}{d+4}} \nonumber \qquad \quad (\mbox{by }\eqref{Convex_h_M})\\
		&\leq & (1+\epsilon)\Lambda^{(\ell)}\rho_n (1+\epsilon)^{d/(d+4)} L_{2,\star}[f,\bm t_0]^{d/(d+4)} d^{d/(d+4)} \alpha_d^{-\frac{2d}{d+4}} \nonumber \quad \qquad (\mbox{by }\eqref{HessianUB2}) \\
		&=&(1+\epsilon)^{(2d+4)/(d+4)}\Delta^{(\ell \star)} \rho_n L_{2,\star}[f,\bm t_0]^{d/(d+4)}\label{optconst49}
	\end{eqnarray}
	As $\epsilon$ is arbitrary, our assertion is proved by using~\eqref{optconst39} and~\eqref{optconst49}. For bounding $(\hat{u}-f)(\bm t_0)$, one again works with the transformed kernel $\xi_2^u(\bm x) := \psi_2^u(\bm x/\sqrt{2})$
	to get a result analogous to Lemma \ref{techresl}.
	This gives rise to the $(\sqrt{2})^d$ term in $\Delta^{(u)}$ and completes the proof of part (b) of Theorem \ref{opconstantcx}.
	\newline
	
	\noindent (c) The proof of part (c) is very similar to that of part (b), so we highlight the differences. To begin with, fix $\varepsilon>0$. Now, as $H({\bm t}_0)$ is a diagonal matrix and $f$ is twice-continuously differentiable, we can find a hyperrectangle $B_\infty(\bm t_0,\bm h_0)$ small enough, such that
	\begin{equation}\label{off-d}
		\sup_{\bm x \in B_\infty(\bm t_0,\bm h_0)} |H_{ij}(\bm x)| \le \frac{\varepsilon \left(\min_{1\le i\le d} H_{ii}(\bm t_0) + \epsilon^\star\right)}{d}~ \qquad \quad \mbox{ for all }\;\;i\ne j.
	\end{equation}
	As before, for the function $g(\bm x) := (f(\bm t+ \bm h\star \bm x) - f(\bm t))/(H_{vv}(\bm t_0) + \epsilon^\star) h_v^2$ (cf.~\eqref{scale7})
	note that:
	\begin{eqnarray*}%\label{eqsyst79}
		&&\sum_{j=1}^d\left|\frac{\partial g(\bm y)}{\partial x_j}- \frac{\partial g(\bm z)}{\partial x_j}\right|\nonumber\\ &\le& \sum_{j=1}^d \sum_{i=1}^d    \left|\frac{\partial}{\partial x_j} g(y_1,\ldots,y_{i-1},z_i,\ldots,z_d) -  \frac{\partial}{\partial x_j} g(y_1,\ldots,y_{i},z_{i+1},\ldots,z_d)\right| 
		\\ &\le& \sum_{j=1}^d |y_j-z_j|\sup_{\bm u \in L(\bm y,\bm z)}\left|\frac{\partial^2}{\partial x_j^2} g(\bm u)\right| +  \sum_{j=1}^d \sum_{i\ne j} |y_i-z_i| \sup_{\bm u \in L(\bm y,\bm z)}\max_{1\le i\le d}\left|\frac{\partial^2}{\partial x_i \partial x_j} g(\bm u)\right|\nonumber\\
		&\le&  \|\bm y-\bm z\|_1\sup_{\bm u \in L(\bm y,\bm z)}\max_{1\le i\le d}\left|\frac{\partial^2}{\partial x_i^2 } g(\bm u)\right| + \sum_{i=1}^d \sum_{j\ne i} |y_i-z_i| \sup_{\bm u \in L(\bm y,\bm z)}\max_{1\le p\ne q\le d}\left|\frac{\partial^2}{\partial x_p \partial x_q} g(\bm u)\right|\\ 
		&\le& \|\bm y-\bm z\|_1\left(\sup_{\bm u \in L(\bm y,\bm z)}\max_{1\le i\le d}\left|\frac{\partial^2}{\partial x_i^2 } g(\bm u)\right| + \varepsilon\right)\nonumber
	\end{eqnarray*}
	where $L(\bm y,\bm z)$ denotes the hyperrectangle defined by the two extreme points $\bm y$ and $\bm z$. The last inequality in the above display follows from noticing that $$\left|\frac{\partial^2}{\partial x_p \partial x_q} g(\bm u)\right| =\frac{\left|\frac{\partial^2}{\partial x_p \partial x_q } f(\bm t + \bm h \star \bm u)\right||h_p h_q|}{(H_{vv}(\bm t_0) + \epsilon^\star)h_v^2} \leq \frac{\varepsilon \left(\min_{1\le i\le d} H_{ii}(\bm t_0) + \epsilon^\star\right)}{d \sqrt{ (H_{pp}(\bm t_0) + \epsilon^\star)(H_{qq}(\bm t_0) + \epsilon^\star)}} \leq \frac{\varepsilon}{d},$$
	where the second to last inequality follows from~\eqref{eq:h_i-cvx} and \eqref{off-d}. The rest of the proof is exactly similar to the proof of part (b), modulo the factor $d^{-4/(d+4)}$ missing in the expression for $c$.
	
	%\qed
	
	\section*{Acknowledgements}
	The authors would like to thank Lutz D\"{u}mbgen for helpful discussions.

	\bibliographystyle{apalike}
	%\addbibresource{biblio.bib}
	\bibliography{biblio}

	\appendix

	\section{Some auxiliary results}\label{techres}
	{\bf Proof of Lemma~\ref{techresl}}:
	To begin with, note that $\psi_2^\ell(\bm x) = (G_A - G_0)\left(\sqrt{2(d+3)/(d+1)} \bm x\right)$, where
	$$G_A(\bm y) := \frac{\|\bm y\|^2}{2} \mathbbm{1}_{\|\bm y\| \le \sqrt{2(d+3)/(d+1)}}\quad \text{and}\quad G_0(\bm y) := \left(-1+ \frac{\sqrt{2}(d+2)}{\sqrt{(d+1)(d+3)}} \|\bm y\|\right) \mathbbm{1}_{\|\bm y\| \le \sqrt{2(d+3)/(d+1)}}~.$$
	We will now prove the following claim:
	
	\begin{claim}\label{h21mns}
		For all $g \in \mathbb{H}_{2,1}'$ with support $\mathscr{B}_d:= \{\bm y: \|\bm y\| \le \sqrt{2(d+3)/(d+1)}\}$, $G_A-g$ is convex on the set $\mathscr{B}_d$.
	\end{claim}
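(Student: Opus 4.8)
\textbf{Proof proposal for Claim~\ref{h21mns}.} The plan is to read off convexity of $G_A - g$ on the ball $\mathscr{B}_d$ directly from the first-order (gradient-monotonicity) characterization of convexity, the point being that membership in $\mathbb{H}_{2,1}'$ forces $\nabla g$ to be Lipschitz in a mixed $\ell_1$--$\ell_\infty$ sense that is exactly calibrated against the curvature of $G_A$. First I would unpack the definition of the superclass: since $\lfloor 2\rfloor = 1$, condition~\eqref{seccondhld} with $\beta = 2$ and $L = 1$ says precisely that $g$ possesses all first-order partial derivatives and
$$\sum_{i=1}^d \left|\frac{\partial g(\bm y)}{\partial x_i} - \frac{\partial g(\bm z)}{\partial x_i}\right| \;=\; \|\nabla g(\bm y) - \nabla g(\bm z)\|_1 \;\le\; \|\bm y - \bm z\| \qquad \text{for all } \bm y,\bm z \in \mathscr{B}_d ,$$
where $\|\cdot\|$ is the Euclidean norm. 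In particular $\nabla g$ is continuous, so $g$ is $C^1$; and $G_A(\bm y) = \|\bm y\|^2/2$ is smooth on $\mathscr{B}_d$ with $\nabla G_A(\bm y) = \bm y$. Hence $\phi := G_A - g$ is $C^1$ on the convex set $\mathscr{B}_d$ with $\nabla\phi(\bm y) = \bm y - \nabla g(\bm y)$, and it suffices to check that $\phi$ is convex on the interior of $\mathscr{B}_d$ (the boundary case then following by continuity).

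Next I would apply the standard fact that a $C^1$ function on a convex set is convex iff its gradient is monotone. Expanding the relevant inner product,
$$\big(\nabla\phi(\bm y) - \nabla\phi(\bm z)\big)^\top(\bm y - \bm z) \;=\; \|\bm y - \bm z\|^2 \;-\; \big(\nabla g(\bm y) - \nabla g(\bm z)\big)^\top(\bm y - \bm z),$$
and the subtracted term is controlled by the $\ell_1$--$\ell_\infty$ Hölder inequality together with the gradient estimate above:
$$\big(\nabla g(\bm y) - \nabla g(\bm z)\big)^\top(\bm y - \bm z) \;\le\; \|\nabla g(\bm y) - \nabla g(\bm z)\|_1\,\|\bm y - \bm z\|_\infty \;\le\; \|\bm y - \bm z\|\,\|\bm y - \bm z\|_\infty \;\le\; \|\bm y - \bm z\|^2 ,$$
using $\|\cdot\|_\infty \le \|\cdot\|$ at the final step. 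Combining the last two displays gives $\big(\nabla\phi(\bm y) - \nabla\phi(\bm z)\big)^\top(\bm y - \bm z) \ge 0$, so $\phi = G_A - g$ is convex on $\mathscr{B}_d$, which is the claim.

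I do not anticipate a genuine obstacle; the only delicate point is the bookkeeping of norms. Condition~\eqref{seccondhld} naturally bounds the $\ell_1$-norm of the gradient increment (it is a sum over coordinates), so to pair it against $\bm y - \bm z$ one must use $\|\bm y - \bm z\|_\infty$ and only relax to $\|\bm y - \bm z\|_2$ at the very end --- it is this chain that makes $G_A$ (a quadratic with identity Hessian, i.e.\ ``unit curvature in every direction'') the correct majorant for $\mathbb{H}_{2,1}'$, the same phenomenon that ultimately dictates the shape of the kernel $\psi_2^\ell$. An equivalent route, if one prefers to avoid invoking gradient monotonicity as a black box, is to write $\phi(\bm y) - \phi(\bm z) - \nabla\phi(\bm z)^\top(\bm y-\bm z) = \int_0^1 t^{-1}\big(\nabla\phi(\bm z + t(\bm y-\bm z)) - \nabla\phi(\bm z)\big)^\top t(\bm y - \bm z)\,dt$ and apply the same pointwise bound inside the integral.
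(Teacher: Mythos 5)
Your proof is correct, and it proves exactly the statement claimed. It takes essentially the same route as the paper --- reduce convexity of $G_A - g$ to a one-dimensional monotonicity statement for a directional derivative, then feed the $\ell_1$-Lipschitz bound on $\nabla g$ coming from \eqref{seccondhld} (with $\beta=2$, $L=1$, $\lfloor\beta\rfloor=1$) into a H\"older-type inequality --- but you execute two details slightly differently, and both are worth noting. First, the paper reduces to the univariate functions $f_{\bm v}(\alpha) = G_A(\alpha\bm v) - g(\alpha\bm v)$ and shows $f_{\bm v}'$ is nondecreasing; this only examines lines through the origin, whereas convexity on $\mathscr{B}_d$ should be checked along arbitrary chords. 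You instead verify full gradient monotonicity $\big(\nabla\phi(\bm y)-\nabla\phi(\bm z)\big)^\top(\bm y-\bm z)\ge 0$ for all pairs $\bm y,\bm z$, which is the cleaner and strictly sufficient condition, so your version actually closes a small imprecision in the paper's write-up. Second, for the pairing you use the $\ell_1$--$\ell_\infty$ H\"older inequality followed by $\|\cdot\|_\infty\le\|\cdot\|_2$, while the paper uses Cauchy--Schwarz followed by $\|\cdot\|_2\le\|\cdot\|_1$; these are two innocuous rearrangements of the same three norms, and both land on $\|\bm y-\bm z\|^2$. The observation in your last paragraph --- that the $\ell_1$-to-Euclidean calibration is precisely what makes a quadratic with identity Hessian the right majorant for $\mathbb{H}_{2,1}'$ --- is the correct way to read the lemma and anticipates why the kernel $\psi_2^\ell$ has the form it does.
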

	\noindent For proving Claim \ref{h21mns}, it suffices to show that for every $\bm v \in \mathbb{R}^d$ such that $\sum_{i=1}^d v_i \ge 0$, the function $f_{\bm v}:\mathbb{R}\mapsto \mathbb{R}$ defined as
	$f_{\bm v}(\alpha) := \frac{\|\alpha \bm v\|^2}{2} - g(\alpha \bm v)$ is convex. Towards this, note that:
	$$f_{\bm v}'(\alpha) = \alpha \|\bm v\|^2 - \bm v^\top \nabla g(\alpha \bm v)~.$$
	Now, take any pair $(\alpha, \beta)$ such that $\alpha < \beta$, and note that:
	\begin{eqnarray*}
		\left|\bm v^\top \nabla g(\alpha \bm v) - \bm v^\top \nabla g(\beta \bm v)\right| &\le& \|\bm v\| \|\nabla g(\alpha \bm v) - \nabla g(\beta \bm v)\|\\&\le& \|\bm v\| \sum_{i=1}^d |\nabla_i g(\alpha \bm v) - \nabla_i  g(\beta \bm v)|\\&\le& \|\bm  v\|\|(\alpha-\beta)\bm v\| = (\beta-\alpha)\|\bm v\|^2~.  
	\end{eqnarray*}
	The last inequality followed from the fact that $g \in \mathbb{H}_{2,1}'$. Hence, we have:
	$$\bm v^\top \nabla g(\beta \bm v) - \bm v^\top \nabla g(\alpha \bm v) \le \beta \|\bm v\|^2 - \alpha \|\bm v\|^2~\implies~f_{\bm v}'(\alpha) \le f_{\bm v}'(\beta)~,$$
	thereby showing that $f_{\bm v}$ is convex, and completing the proof of Claim \ref{h21mns}.
	
	With Claim \ref{h21mns} in hand, we are now ready to prove Lemma \ref{techresl}. Defining $\psi := G_A-G_0$, we have in view of Claim \ref{h21mns} and \eqref{short}, that for any $g \in \mathbb{H}_{2,1}'$ with support $\mathscr{B}_d$,
	
	$$\langle G_A-g, \psi\rangle \le (G_A-g)(\boldsymbol{0})\langle 1,\psi\rangle$$
	and hence, we have:
	\begin{eqnarray*}
		\langle g, \psi\rangle &=& \langle G_A,\psi\rangle - \langle G_A - g, \psi \rangle\\&\ge & \langle G_A,\psi\rangle - (G_A-g)(\boldsymbol{0})\langle 1,\psi\rangle\\&=& \langle G_A,\psi\rangle + g(\boldsymbol{0})\langle 1,\psi\rangle\\&=& \|\psi\|^2 + \langle G_0,\psi\rangle + g(\boldsymbol{0})\langle 1,\psi\rangle\\&=& \|\psi\|^2 + (g(\boldsymbol{0})-1)\langle 1,\psi\rangle
	\end{eqnarray*}
	where the last equality followed from the fact that $\langle G_0+1,\psi\rangle = 0$, which follows by an argument similar to the proof of \eqref{orthbreak}. 
	Since $g(\boldsymbol{0}) \ge 0$, we conclude that:
	$$
	\langle g, \psi\rangle \ge \|\psi\|^2 - \langle 1, \psi\rangle
	$$
	%On putting $g \equiv 0$ in \eqref{inteq68}, we get:
	%\begin{equation}\label{inteq687}
	%     \langle 1, \psi\rangle \ge \|\psi\|^2
	%\end{equation}
	%Lemma \ref{techresl} now follows from \eqref{inteq68} and \eqref{inteq687} by a change of variables. 
	which proves the following claim:
	
	\begin{claim}\label{h21mn8s}
		For all $g \in \mathbb{H}_{2,1}'$ with support $\mathscr{B}_d:= \{\bm y: \|\bm y\| \le \sqrt{2(d+3)/(d+1)}\}$, and $\psi: \mathscr{B}_d\to \R$ defined as:
		$\psi(\bm y) := \psi_2^\ell\left(\sqrt{\frac{d+1}{2(d+3)}}\bm y\right)$, we have:
		$$
		\langle g, \psi\rangle \ge \|\psi\|^2 - \langle 1, \psi\rangle
		$$
	\end{claim}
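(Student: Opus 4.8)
\emph{Proof plan.} The plan is to obtain the bound as a direct consequence of the two facts just assembled — Claim~\ref{h21mns} and the convex-function kernel inequality~\eqref{short} for $\psi_2^\ell$ — after a single change of variables. First I would note that the kernel $\psi$ in the statement is nothing but $G_A-G_0$: since $\psi_2^\ell(\bm x)=(G_A-G_0)(\alpha_d\bm x)$ with $\alpha_d:=\sqrt{2(d+3)/(d+1)}$, the substitution $\bm x=\bm y/\alpha_d$ gives $\psi(\bm y)=(G_A-G_0)(\bm y)$ on $\mathscr{B}_d$ (and $0$ elsewhere). Hence $G_A=\psi+G_0$ on $\mathscr{B}_d$, every inner product below is an integral over $\mathscr{B}_d$, and the same substitution $\bm y\mapsto\alpha_d\bm x$ turns each of them into $\alpha_d^d$ times the corresponding inner product against $\psi_2^\ell$ over the unit ball $B_d$; since this factor $\alpha_d^d$ is common to both sides of every inequality we will write, statements proved for $\psi_2^\ell$ transfer verbatim to $\psi$.

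With this dictionary in place, the argument is short. By Claim~\ref{h21mns}, $G_A-g$ is convex on $\mathscr{B}_d$; equivalently $(G_A-g)(\alpha_d\,\cdot\,)$ is convex on $B_d$, which is all the proof of~\eqref{short} for $\psi_2^\ell$ ever uses (it only evaluates the convex function at points of the unit ball). Applying~\eqref{short} to this convex function and undoing the rescaling yields $\langle G_A-g,\psi\rangle\le (G_A-g)(\bm 0)\,\langle 1,\psi\rangle=-g(\bm 0)\,\langle 1,\psi\rangle$, using $G_A(\bm 0)=0$. Therefore
\[
\langle g,\psi\rangle=\langle G_A,\psi\rangle-\langle G_A-g,\psi\rangle\ \ge\ \langle G_A,\psi\rangle+g(\bm 0)\,\langle 1,\psi\rangle .
\]
Next I would evaluate $\langle G_A,\psi\rangle$ by writing $G_A=\psi+G_0$, getting $\langle G_A,\psi\rangle=\|\psi\|^2+\langle G_0,\psi\rangle$, and then use the moment identity $\langle G_0+1,\psi\rangle=0$ to replace $\langle G_0,\psi\rangle$ by $-\langle 1,\psi\rangle$, so that $\langle G_A,\psi\rangle=\|\psi\|^2-\langle 1,\psi\rangle$. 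Combining, $\langle g,\psi\rangle\ge\|\psi\|^2-\langle 1,\psi\rangle+g(\bm 0)\,\langle 1,\psi\rangle$, and since $g(\bm 0)\ge 0$ (the standing hypothesis on the $s$ to which this claim gets applied) and $\langle 1,\psi\rangle>0$ ($\psi$ inherits property~(iii) of a kernel from $\psi_2^\ell$), the extra term is nonnegative and the claim follows.

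The only genuine computation, and the step I expect to cost the most care, is the moment identity $\langle G_0+1,\psi\rangle=0$. On $\mathscr{B}_d$ both $G_0+1$ and $\psi$ are purely radial (affine, resp.\ quadratic, in $\|\bm y\|$), so passing to polar coordinates collapses this inner product — up to a positive constant — to a one-dimensional integral $\int_0^{\alpha_d}r^{d}\bigl(\tfrac12 r^2-c_d r+1\bigr)\,dr$, where $c_d$ is the linear coefficient of $G_0$; performing this integral and substituting $\alpha_d^2=2(d+3)/(d+1)$ makes it vanish, which is precisely why $G_0$ was normalized the way it was. (Alternatively, one can verify this identity by the orthant-decomposition device used to prove~\eqref{orthbreak}.) Apart from this one integral, the whole proof is bookkeeping around the single substitution $\bm y\mapsto\alpha_d\bm x$, so I would expect no further obstacles.
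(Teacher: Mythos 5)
Your proposal is correct and takes essentially the same route as the paper: both proofs pass from Claim~\ref{h21mns} and the convex-kernel inequality~\eqref{short} to $\langle G_A-g,\psi\rangle\le(G_A-g)(\bm 0)\langle 1,\psi\rangle$, write $\langle g,\psi\rangle=\langle G_A,\psi\rangle-\langle G_A-g,\psi\rangle=\|\psi\|^2+\langle G_0,\psi\rangle-\langle G_A-g,\psi\rangle$, invoke the moment identity $\langle G_0+1,\psi\rangle=0$, and close with $g(\bm 0)\ge 0$ and $\langle 1,\psi\rangle>0$. The only cosmetic difference is that you verify the moment identity by a direct radial integral whereas the paper outsources it to the orthant argument used for~\eqref{orthbreak}; you also explicitly flag the implicit use of $g(\bm 0)\ge 0$, which the paper invokes but does not list among the claim's hypotheses.
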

	
	Lemma \ref{techresl} now follows from Claim \ref{h21mn8s} on observing that $\psi_2^\ell(\bm y) = \psi\left(\sqrt{2(d+3)/(d+1)} \bm y\right)$.
	\qed
	% \end{proof}
	%\fi

	\begin{lem}\label{nndrs}
		If $A := ((A_{i,j}))_{1\le i,j\le n}$ is an $n\times n$ nonnegative definite matrix, then there exists $1\le i\le n$ such that $A_{i,i} =\max_{1\le j,k\le n} A_{j,k}$.    
	\end{lem}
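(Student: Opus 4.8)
The plan is to reduce the claim to the elementary two-by-two case. Let $M := \max_{1\le j\le n} A_{j,j}$ and let $i$ be an index attaining this maximum, so $A_{i,i}=M$. Since $M$ is the maximum of entries that all lie among the entries of $A$, we trivially have $M \le \max_{1\le j,k\le n} A_{j,k}$, so it remains to prove the reverse inequality $A_{j,k}\le M$ for every pair $(j,k)$.

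First I would record that $M\ge 0$: for each $j$, taking the standard basis vector $e_j$ gives $A_{j,j}=e_j^\top A e_j\ge 0$ by nonnegative definiteness. Next, for an arbitrary pair $(j,k)$ with $j\ne k$, consider the principal submatrix of $A$ indexed by $\{j,k\}$, namely $\begin{pmatrix} A_{j,j} & A_{j,k}\\ A_{k,j} & A_{k,k}\end{pmatrix}$; a principal submatrix of a nonnegative definite matrix is itself nonnegative definite, so its determinant is nonnegative. Using the symmetry $A_{j,k}=A_{k,j}$ this yields $A_{j,k}^2 \le A_{j,j}A_{k,k} \le M^2$, hence $A_{j,k}\le |A_{j,k}| \le M$. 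For $j=k$ the inequality $A_{j,j}\le M$ holds by definition of $M$. Combining, $\max_{1\le j,k\le n} A_{j,k}=M=A_{i,i}$, which is exactly the assertion with this choice of $i$.

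There is no real obstacle here; the only points to be careful about are that ``nonnegative definite'' is being used to mean (symmetric) positive semidefinite, so that the symmetry $A_{j,k}=A_{k,j}$ and the nonnegativity of principal minors are available, and that we want the inequality for the signed entries $A_{j,k}$ rather than their absolute values — which is immediate since $A_{j,k}\le|A_{j,k}|$.
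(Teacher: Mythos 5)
Your proof is correct but takes a different elementary route than the paper. You reduce to the $2\times 2$ principal submatrix on the index set $\{j,k\}$ and use nonnegativity of its determinant, which gives the Cauchy–Schwarz-type bound $A_{j,k}^2 \le A_{j,j}A_{k,k}$; combined with $A_{j,j}\ge 0$ this yields $|A_{j,k}| \le \sqrt{A_{j,j}A_{k,k}} \le M$. The paper instead argues by contradiction: assuming some off-diagonal entry $A_{i,j}$ exceeds every diagonal entry, it evaluates the quadratic form at $\bm e_i - \bm e_j$ to get $A_{i,i}+A_{j,j}-2A_{i,j}\ge 0$, i.e.\ $A_{i,j}\le (A_{i,i}+A_{j,j})/2$, directly contradicting the assumption. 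Both arguments rest on the same underlying structure (a $2\times 2$ principal block being PSD), but your version passes through the geometric-mean bound (the second leading principal minor) while the paper uses the arithmetic-mean bound (the quadratic form at a difference of basis vectors). The paper's argument is marginally more self-contained, needing only the definition of PSD applied to one vector, whereas yours requires recalling that principal submatrices inherit positive semidefiniteness and that a $2\times 2$ PSD matrix has nonnegative determinant; on the other hand, your inequality $|A_{j,k}|\le\sqrt{A_{j,j}A_{k,k}}$ is strictly sharper and gives the two-sided bound $|A_{j,k}| \le M$ for free, not just $A_{j,k}\le M$.
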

	\begin{proof}
		Suppose that there exists $1\le i<j\le n$ such that $A_{i,j}>\max_{1\le i\le n} A_{i,i}$. Note that if $\bm e_i$ denotes the vector with the $i^{\mathrm{th}}$ entry $1$ and all other entries $0$, then:
		$$0\le (\bm e_i - \bm e_j)^\top A \bm (\bm e_i - \bm e_j) = A_{i,i} + A_{j,j}-2A_{i,j}~\implies~A_{i,i}+A_{j,j} \ge 2 A_{i,j}~,$$ a contradiction! This proves Lemma \ref{nndrs}.
	\end{proof}

\end{document}